\def\MRbibitem{\@ifnextchar[\my@lbibitem\my@bibitem}
\def\mybiblabel#1#2{\@biblabel{{\hyperref{http://www.ams.org/mathscinet-getitem?mr=#1}{}{}{#2}}}}
\def\myhyperanchor#1{\Hy@raisedlink{\hyper@anchorstart{cite.#1}\hyper@anchorend}}
\def\my@lbibitem[#1]#2#3#4\par{%
  \item[\mybiblabel{#2}{#1}\myhyperanchor{#3}\hfill]#4%
  \@ifundefined{ifbackrefparscan}{}{\BR@backref{#3}}%
  \if@filesw{\let\protect\noexpand\immediate
    \write\@auxout{\string\bibcite{#3}{#1}}}\fi\ignorespaces%
}
\def\my@bibitem#1#2#3\par{%
  \refstepcounter\@listctr
  \item[\mybiblabel{#1}{\the\value\@listctr}\myhyperanchor{#2}\hfill]#3%
  \@ifundefined{ifbackrefparscan}{}{\BR@backref{#2}}%
  \if@filesw\immediate\write\@auxout
    {\string\bibcite{#2}{\the\value\@listctr}}\fi\ignorespaces%
}
\DeclareFontFamily{U} {MnSymbolA}{}
\DeclareFontShape{U}{MnSymbolA}{m}{n}{
   <-6> MnSymbolA5
   <6-7> MnSymbolA6
   <7-8> MnSymbolA7
   <8-9> MnSymbolA8
   <9-10> MnSymbolA9
   <10-12> MnSymbolA10
   <12-> MnSymbolA12}{}
\DeclareFontShape{U}{MnSymbolA}{b}{n}{
   <-6> MnSymbolA-Bold5
   <6-7> MnSymbolA-Bold6
   <7-8> MnSymbolA-Bold7
   <8-9> MnSymbolA-Bold8
   <9-10> MnSymbolA-Bold9
   <10-12> MnSymbolA-Bold10
   <12-> MnSymbolA-Bold12}{}
\DeclareSymbolFont{MnSyA} {U} {MnSymbolA}{m}{n}
 \DeclareFontFamily{U} {MnSymbolC}{}
\DeclareFontShape{U}{MnSymbolC}{m}{n}{
  <-6> MnSymbolC5
  <6-7> MnSymbolC6
  <7-8> MnSymbolC7
  <8-9> MnSymbolC8
  <9-10> MnSymbolC9
  <10-12> MnSymbolC10
  <12-> MnSymbolC12}{}
\DeclareFontShape{U}{MnSymbolC}{b}{n}{
  <-6> MnSymbolC-Bold5
  <6-7> MnSymbolC-Bold6
  <7-8> MnSymbolC-Bold7
  <8-9> MnSymbolC-Bold8
  <9-10> MnSymbolC-Bold9
  <10-12> MnSymbolC-Bold10
  <12-> MnSymbolC-Bold12}{}
\DeclareSymbolFont{MnSyC} {U} {MnSymbolC}{m}{n}
\DeclareMathSymbol{\top}{\mathord}{MnSyA}{219} 
\DeclareMathSymbol{\plus}{\mathord}{MnSyC}{20} 
\declaretheorem[numberwithin=section]{theorem} 
\declaretheorem[sibling=theorem]{proposition} 
\declaretheorem[sibling=theorem]{lemma}
\declaretheorem[sibling=theorem]{corollary}
\declaretheorem[sibling=theorem, style=definition]{example}
\declaretheorem[sibling=theorem, style=definition]{definition}
\declaretheorem[sibling=theorem]{remark}
\numberwithin{equation}{section}     
\setlist[enumerate,1]{label={\upshape(\alph*)},ref=\alph*}
\setlist[enumerate,2]{label={\upshape(\arabic*)},ref=\arabic*} 
\newcommand{\M}{\mathcal{M}}
\newcommand{\R}{\mathbb{R}}
\newcommand{\Z}{\mathbb{Z}}
\newcommand{\N}{\mathbb{N}}
\def\phi{\varphi}
\def\R{{\mathbb R}}
\def\N{{\mathbb N}}
\def\Z{{\mathbb Z}}
\def\F{{\mathcal F}}
\def\M{{\mathcal M}}
\def\diam{\mbox{\rm diam} }
\def\le{\leqslant}
\def\ge{\geqslant}
\def\F{\mathcal{F}}
\def\M{\mathcal{M}}
\newcommand{\vertiii}[1]{{\left\vert\kern-0.25ex\left\vert\kern-0.25ex\left\vert #1 
    \right\vert\kern-0.25ex\right\vert\kern-0.25ex\right\vert}}
\newcommand{\invertiii}[1]{{\vert\kern-0.25ex\vert\kern-0.25ex\vert #1 
    \vert\kern-0.25ex\vert\kern-0.25ex\vert}}
\begin{document}

\title{Non-compact spaces of invariant measures}
\date{\today}

\subjclass[2010]{37D35, 37A10, 37A35}

\begin{thanks}
{We thank Raimundo Brice\~no for his comments and insights regarding Remark \ref{rai} and Mike Todd for many interesting observations. G.I.\ was partially supported  by Proyecto Fondecyt 1230100. A.V.\ was partially supported  by Proyecto Fondecyt Iniciaci\'on 11220409.}
\end{thanks}

\author[G.~Iommi]{Godofredo Iommi}
\address{Facultad de Matem\'aticas,
Pontificia Universidad Cat\'olica de Chile (UC), Avenida Vicu\~na Mackenna 4860, Santiago, Chile}
\email{\href{giommi@uc.cl}{giommi@uc.cl}}
\urladdr{\url{https://sites.google.com/view/godofredo-iommi/}}

 \author[A.~Velozo]{Anibal Velozo}  \address{Facultad de Matem\'aticas,
Pontificia Universidad Cat\'olica de Chile (UC), Avenida Vicu\~na Mackenna 4860, Santiago, Chile}
\email{\href{apvelozo@uc.cl}{apvelozo@uc.cl}}
\urladdr{\href{https://sites.google.com/view/apvelozo/inicio}{https://sites.google.com/view/apvelozo/inicio}}

\begin{abstract}
We study a compactification of the space of invariant probability measures for a transitive countable Markov shift. We prove that it is affine homeomorphic to the Poulsen simplex. Furthermore, we establish that, depending on a combinatorial property of the shift space, the compactification contains either a single new ergodic measure or a dense set of them. As an application of our results, we prove that the space of ergodic probability measures of a transitive countable Markov shift is homeomorphic to $\ell_2$, extending to the non-compact setting a known result for subshifts of finite type. Additionally, we explore implications for thermodynamic formalism, including a version of the dual variational principle for transitive countable Markov shifts with uniformly continuous potentials.
\end{abstract}

\maketitle

\section{Introduction}

A major breakthrough in the theory of dynamical systems was the realization that studying  invariant measures, instead of orbits, provided a significant gain in structure, enabling the proof of important results. The study of the topology and affine geometry of the space of invariant probability measures for a dynamical system has been a central theme in ergodic theory since its early stages. Some of the first results in this direction were proved in 1937 by Kryloff and Bogoliouboff \cite{kb}, spurred by the remarkable development of ergodic theory during that decade. Indeed, they showed that the space of invariant measures for a continuous map defined on a compact metric space is non-empty \cite[Theorem I, p.92]{kb}. Interestingly, to achieve this, they proved that the space of probability measures is compact with respect to the weak* topology \cite[Theorem I, p.69]{kb}. Moreover, they observed that the space of invariant measures is convex \cite[Corollaire, p.106]{kb}. Since ergodic measures play a fundamental role in the theory, understanding their inclusion in the space of invariant measures is particularly relevant. It was soon realized that ergodic measures are the extreme points of the convex set of invariant measures. Furthermore, in the 1960s, Oxtoby \cite{o} and Parthasarathy \cite{par} showed that, under mild assumptions, the set of ergodic measures form a $G_{\delta}-$set with respect to the weak* topology \cite[Theorem 2.1]{par}.

 Interest in this topic was further boosted by the work of Sigmund \cite{si} in the early 1970s. His research focused on Axiom A diffeomorphisms and subshifts of finite type. Among other results, he showed that the set of invariant probability measures supported on periodic orbits, referred to as \emph{periodic measures}, is dense in the space of invariant measures, and that the set of ergodic measures is residual (a result already noted by Parthasarathy \cite[Theorem 3.2]{par}). These findings, along with those in \cite{los}, establish that the space of invariant probability measures for a transitive subshift of finite type is affine homeomorphic to the Poulsen simplex. The Poulsen simplex is a distinguished infinite-dimensional Choquet simplex, characterized by the property that its set of extreme points is dense. More recently, Gelfert and Kwietniak \cite{gk} identified conditions on abstract dynamical systems that  guarantee the corresponding space of invariant measures forms a Poulsen simplex. Regarding other simplices, Downarowicz \cite{d} obtained a remarkable realization result in 1991, proving that for every Choquet simplex, there exists a minimal subshift whose space of invariant probability measures is affine homeomorphic to that simplex.

In this article, we study non-compact spaces of invariant measures. More precisely, we study the space of invariant probability measures of countable Markov shifts. These systems generalize subshifts of finite type by allowing the alphabet to contain countably many symbols. Countable Markov shifts are  dynamical systems defined on non-compact phase spaces that exhibit a great deal of complexity.




The study of ergodic properties of countable Markov shifts has been recently boosted by the construction of Markov partitions on countable alphabets for large classes of dynamical systems. For instance, non-uniformly hyperbolic diffeomorphisms defined on compact manifolds \cite{ov, sa4}, Sinai and Bunimovich billiards \cite{lm}, and the geodesic flow on non-compact hyperbolic surfaces \cite{gka}.  For a survey of this topic, see \cite{l}, and for striking applications of this method, refer to \cite{bcs}. 

\subsection{Compactification of the space of invariant probability measures} 

Compactifications of the phase space of countable Markov shifts have been studied. For instance, Gurevich \cite{gu2}, Walters \cite{Wal78}, Zargaryan \cite{Zar86},  Gurevich-Savchenko \cite{gs} and Iommi-Todd \cite{it2} considered  metric compactifications in order to study thermodynamic formalism in this setting. For locally compact countable Markov shifts, Fiebig and Fiebig \cite{ff1}  constructed compactifications that are larger than the one point compactification. 
In the same setting, Schwartz \cite{sh} extended the notion of  Martin boundary and obtained results related to the corresponding transfer operator. More generally, for systems defined on non-compact phase spaces, Handel and Kitchens \cite{hk} investigated various notions of entropy, considering all possible metric compactifications of the phase space. Our interest, in contrast with these works, is in the space of invariant measures. 


 %

In \cite{iv}, we introduced and studied a particular class of countable Markov shifts satisfying a combinatorial condition known as the $\mathcal{F}-$\emph{property} (see Section \ref{F} for details). Examples of systems that satisfy the $\mathcal{F}-$property include locally compact countable Markov shifts and countable Markov shifts with finite entropy. With the aid of a topology that captures the escape of mass phenomenon, the \emph{cylinder topology}, we proved that if $(\Sigma, \sigma)$ satisfies the $\F-$property, the space of invariant sub-probability measures, $\M_{\leq 1}(\Sigma,\sigma)$, is compact and affine homeomorphic to the Poulsen simplex (see \cite[Theorem 1.2]{iv}). As a consequence, we showed that the space of invariant probability measures is affine homeomorphic to the Poulsen simplex minus a vertex and its corresponding convex combinations. On the other hand, if $(\Sigma,\sigma)$ does not satisfy the $\F-$property, then $\M_{\leq 1}(\Sigma,\sigma)$ is non-compact (see \cite[Proposition 4.19]{iv}).  

In this work, we extend our study to the general case of transitive countable Markov shifts without any combinatorial assumptions. Our approach in this broader setting is different: we consider a metric compactification of the phase space, leading to a compact dynamical system $(\bar{\Sigma}, \bar{\sigma})$, similar to the one first studied by Zargaryan \cite{Zar86} (see also \cite{gs, it2}). In this compactification, a new symbol, denoted by $\infty$, is added to the alphabet; however, the resulting dynamical system is no longer a countable Markov shift. The space of invariant probability measures for the system $(\Sigma, \sigma)$ is denoted by $\M(\Sigma, \sigma)$ and the space of invariant probability measures for the compactified system $(\bar{\Sigma}, \bar{\sigma})$ is denoted by $\mathcal{M}(\bar{\Sigma}, \bar{\sigma})$. We establish the following result. 

\begin{theorem} \label{thm:po}
Let $(\Sigma,  \sigma)$ be a transitive countable Markov shift. Then, $\M(\bar{\Sigma},\bar{\sigma})$ is affine homeomorphic to the Poulsen simplex, and $\M(\Sigma,\sigma)$ is a dense subset of $\M(\bar{\Sigma},\bar{\sigma})$.
\end{theorem}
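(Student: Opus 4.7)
The plan is to recognize $\mathcal{M}(\bar{\Sigma},\bar{\sigma})$ as the Poulsen simplex via the Lindenstrauss--Olsen--Sternfeld characterization \cite{los}, by showing it is a nonempty metrizable Choquet simplex whose extreme points form a dense subset. Since $\bar{\Sigma}$ is compact metrizable and $\bar{\sigma}$ is continuous, $\mathcal{M}(\bar{\Sigma},\bar{\sigma})$ is automatically a nonempty metrizable Choquet simplex in the weak$^*$ topology, and transitivity of $(\Sigma,\sigma)$ supplies infinitely many ergodic periodic measures, ensuring it is infinite-dimensional. The remaining step, density of the ergodic measures in $\mathcal{M}(\bar{\Sigma},\bar{\sigma})$, will be established in the strengthened form that the periodic measures supported on $\Sigma$ are dense; this simultaneously yields part (b) of the theorem.

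Given $\bar{\mu}\in\mathcal{M}(\bar{\Sigma},\bar{\sigma})$, I would decompose it with respect to the $\bar{\sigma}$-invariant partition $\Sigma\sqcup(\bar{\Sigma}\setminus\Sigma)$ as $\bar{\mu}=(1-t)\mu+t\nu$, where $\mu\in\mathcal{M}(\Sigma,\sigma)$, $\nu$ is supported on the added boundary, and $t=\bar{\mu}(\bar{\Sigma}\setminus\Sigma)$. The part $\mu$ is approximated by ergodic periodic measures on $\Sigma$ via a Sigmund-style scheme adapted to the countable alphabet: fix a finite family of cylinders witnessing the weak$^*$ topology to the required precision, select a $\mu$-generic point $x$ by the Birkhoff ergodic theorem, truncate to a long initial segment, and use transitivity of the Markov graph to find a short connecting word closing it into a periodic orbit whose invariant measure is close to $\mu$ on the chosen cylinders. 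For the boundary part, $\nu$ is approximated by periodic measures on $\Sigma$ built from symbols of arbitrarily large index: orbits contained in the union of cylinders indexed by symbols exceeding $N$ sit in a neighborhood of $\bar{\Sigma}\setminus\Sigma$ inside $\bar{\Sigma}$, so their empirical measures weak$^*$-concentrate on the boundary as $N\to\infty$, and transitivity lets one prescribe the proportions in which the various boundary points are visited. A single ergodic approximation of $\bar{\mu}$ is then produced by concatenating the two approximating orbit segments in proportions $(1-t):t$ through connecting words supplied by transitivity and closing the resulting word into one periodic orbit.

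The main obstacle is the second approximation: showing that every invariant probability measure $\nu$ supported on $\bar{\Sigma}\setminus\Sigma$ is a weak$^*$-limit of periodic measures on $\Sigma$, with controllable weights. This requires understanding the dynamics near the added boundary and how asymptotic frequencies along periodic orbits in $\Sigma$ realize boundary invariant measures; this is precisely the combinatorial content underlying the dichotomy stated in the abstract (one new ergodic measure versus a dense set of them). Because the $\mathcal{F}$-property is not assumed, the direct lift to the space of invariant sub-probability measures used in \cite{iv} is unavailable, which is what forces the analysis to take place on the compactification $\bar{\Sigma}$ itself.
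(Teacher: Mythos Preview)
Your overall strategy is correct in outline, but step (3)---approximating an arbitrary boundary measure $\nu$ supported on $\bar\Sigma\setminus\Sigma$ by periodic measures on $\Sigma$---has a genuine gap as described. You write that ``orbits contained in the union of cylinders indexed by symbols exceeding $N$'' give periodic measures concentrating on the boundary; but such orbits can only approximate $\delta_{\bar\infty}$, not a general boundary ergodic measure. For instance, on the full shift the periodic measure on $\overline{1\infty}=(1,\infty,1,\infty,\ldots)$ assigns mass $1/2$ to $[1]_{\bar\Sigma}$, so any approximating periodic orbit must visit the symbol $1$ half the time: one approximates it by the $\Sigma$-periodic measures on $\overline{1n}$ as $n\to\infty$, not by orbits avoiding small symbols. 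Your clause ``transitivity lets one prescribe the proportions in which the various boundary points are visited'' does not provide a mechanism for this. A smaller issue: in step (2), the $\Sigma$-conditional $\mu$ of $\bar\mu$ need not be ergodic, so ``select a $\mu$-generic point by Birkhoff'' is not valid without first passing to a finite average of ergodic measures.

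The paper avoids the decomposition $\bar\mu=(1-t)\mu+t\nu$ entirely and runs the Sigmund argument directly in the compactification. The key observation (Proposition~\ref{prop:miss}) is that every ergodic $\mu_i\in\M(\bar\Sigma,\bar\sigma)$ with $\mu_i\neq\delta_{\bar\infty}$ satisfies $\mu_i(K_M)>0$ for some finite block $K_M=\bigcup_{a\le M}[a]_{\bar\Sigma}$; one can then pick Birkhoff-generic points for each $\mu_i$ whose orbit segments begin and end in $K_M$, and glue them into a single periodic orbit of $(\bar\Sigma,\bar\sigma)$ using connecting words supplied by transitivity of $\Sigma$. This already handles boundary ergodic measures and $\Sigma$-ergodic measures uniformly, with no need to treat the boundary separately. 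A second, easy step (Proposition~\ref{lem:21}) then shows that any periodic orbit of $(\bar\Sigma,\bar\sigma)$ is a weak$^*$-limit of periodic orbits of $(\Sigma,\sigma)$: simply replace each occurrence of the symbol $\infty$ in the periodic word by a large integer $k$ and let $k\to\infty$. Splitting the approximation into these two stages---first close up inside $\bar\Sigma$, then push back to $\Sigma$---circumvents the analysis of boundary invariant measures that you flag as the main obstacle.
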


We provide a description of the ergodic measures added in the compactification. A striking dichotomy arises: either the compactification adds only a single new extreme point, corresponding to the atomic measure on the fixed point $\bar{\infty}=(\infty, \infty, \dots)$, that we denote by $\delta_{\bar{\infty}}$, or it produces a dense set of new extreme points. The first case characterizes the $\F-$property and can be used to obtain a new proof of the compactness of $\M_{\le 1}(\Sigma,\sigma)$ under this assumption. 

\begin{theorem} \label{thm:erg}
Let $(\Sigma,  \sigma)$ be a transitive countable Markov shift. Then:
\begin{enumerate}
\item \label{1}  The system $(\Sigma, \sigma)$  satisfies the $\F-$\emph{property} if and only if  the unique ergodic measure in $\M(\bar\Sigma,\bar\sigma) \setminus \M(\Sigma,{\sigma})$ is $\delta_{\bar{\infty}}$. 
 \item \label{2} The system $(\Sigma, \sigma)$  does not satisfy the $\F-$\emph{property} if and only if  the set of ergodic measures in $\M(\bar\Sigma,\bar\sigma) \setminus \M(\Sigma,\sigma)$ is dense in $\M(\bar\Sigma,\bar\sigma)$.
\end{enumerate}
\end{theorem}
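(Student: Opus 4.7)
The plan is to decompose $\bar\Sigma$ into three dynamically natural pieces:
\[ A_1 = \Sigma, \qquad A_2 = \bigcup_{n \geq 0} \bar\sigma^{-n}(\{\bar\infty\}), \qquad A_3 = \bar\Sigma \setminus (A_1 \cup A_2). \]
Here $A_2$ is $\bar\sigma$-invariant (both ways, using $\bar\sigma(\bar\infty)=\bar\infty$), while $\Sigma$ is mod-$\mu$ invariant for every $\mu \in \M(\bar\Sigma, \bar\sigma)$ because $\bar\sigma(\Sigma) \subseteq \Sigma$ forces $\mu(\bar\sigma^{-1}(\Sigma) \setminus \Sigma) = 0$ by invariance. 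Iterating $\mu(\bar\sigma^{-1}(\{\bar\infty\})) = \mu(\{\bar\infty\})$ also gives $\mu(A_2 \setminus \{\bar\infty\}) = 0$. Consequently, for every ergodic $\mu \in \M(\bar\Sigma, \bar\sigma)$ exactly one of $\mu(A_1) = 1$, $\mu(\{\bar\infty\}) = 1$, or $\mu(A_3) = 1$ holds: the first case gives $\mu \in \M(\Sigma, \sigma)$, the second gives $\mu = \delta_{\bar\infty}$. Both parts of the theorem therefore reduce to characterizing when ergodic measures live on $A_3$.

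For the forward direction of Part \eqref{1}, I would assume the $\F$-property and show that no invariant measure places mass on $A_3$. The key step is identifying a natural affine map $\M(\bar\Sigma, \bar\sigma) \to \M_{\leq 1}(\Sigma, \sigma)$ by $\mu \mapsto \mu|_\Sigma$ whose target is $\sigma$-invariant precisely when $\mu(\{y_0 = \infty, y_1 \neq \infty\}) = 0$, and checking that under the $\F$-property this map is a well-defined affine homeomorphism with inverse $\nu \mapsto \nu + (1 - \|\nu\|)\delta_{\bar\infty}$. The proof of surjectivity and continuity is where the $\F$-property enters: it allows us to match the weak-$*$ topology on $\bar\Sigma$ with the cylinder topology on $\Sigma$ used in \cite{iv}, so that any invariant $\mu$ with $\mu(A_3) > 0$ would produce a sequence in $\M(\Sigma,\sigma)$ whose cylinder-topology cluster points would violate the compactness of $\M_{\leq 1}(\Sigma,\sigma)$ from [iv, Theorem 1.2]. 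The reverse implication in Part \eqref{1} follows from Part \eqref{2}.

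For Part \eqref{2}, assume the $\F$-property fails. By \cite[Proposition 4.19]{iv}, $\M_{\leq 1}(\Sigma, \sigma)$ is non-compact, and concretely there is a sequence $\mu_n \in \M(\Sigma, \sigma)$ whose escape of mass is not concentrated on $\bar\infty$; passing to weak-$*$ cluster points in $\M(\bar\Sigma, \bar\sigma)$ produces a $\bar\sigma$-invariant measure $\nu$ with $\nu(A_3) > 0$, and ergodic decomposition yields an ergodic $\nu_0$ with $\nu_0(A_3) = 1$. To upgrade this single new ergodic measure to a dense set, I would combine the Poulsen property from Theorem \ref{thm:po} with a Sigmund-type periodic-orbit construction in $(\bar\Sigma, \bar\sigma)$: given $\eta \in \M(\bar\Sigma, \bar\sigma)$ and $\varepsilon > 0$, approximate $(1 - \varepsilon)\eta + \varepsilon\nu_0$ by a periodic ergodic measure of $\bar\sigma$ supported on an orbit visiting the symbol $\infty$, ensuring the approximant lies outside $\M(\Sigma, \sigma)$. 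Running this for every $\eta, \varepsilon$ yields density. The converse of Part \eqref{2} is then immediate from the forward direction of Part \eqref{1}.

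The main obstacle is the forward direction of Part \eqref{1}. Ruling out every ergodic measure on $A_3$ requires more than an abstract ergodic decomposition: one must show that under the $\F$-property the escape of mass is truly one-channel, matching the boundary point $\bar\infty$. The technical heart is proving that the restriction map $\mu \mapsto \mu|_\Sigma$ is injective and has image all of $\M_{\leq 1}(\Sigma,\sigma)$; this forces a careful comparison between weak-$*$ convergence in the metric compactification $\bar\Sigma$ and cylinder-topology convergence in $\M_{\leq 1}(\Sigma,\sigma)$, exploiting precisely the combinatorial finiteness condition that defines the $\F$-property.
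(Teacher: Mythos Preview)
Your trichotomy for ergodic measures is correct and sets up the problem well, but your approach to each direction differs substantially from the paper's and is considerably more roundabout.

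For the forward direction of Part~\eqref{1}, the paper gives a self-contained combinatorial argument (Lemma~\ref{lema_infi}): under the $\F$-property there are simply no points in $\bar\Sigma$ whose coordinate sequence contains a block $a,\infty,\ldots,\infty,b$ with $a,b\in\N$, since such a point would witness infinitely many admissible words of fixed length from $a$ to $b$. By Birkhoff, any ergodic $\mu$ with $\mu([\infty]_{\bar\Sigma})>0$ and $\mu([a]_{\bar\Sigma})>0$ for some $a\in\N$ would have generic points exhibiting that forbidden pattern; hence $\mu([\infty]_{\bar\Sigma})>0$ forces $\mu=\delta_{\bar\infty}$. Your route via the restriction map $\mu\mapsto\mu|_\Sigma$ and the compactness of $\M_{\le 1}(\Sigma,\sigma)$ from \cite{iv} can be made to work (density from Proposition~\ref{lem:21} plus Theorem~\ref{topo:r} gives that every $\mu\in\M(\bar\Sigma,\bar\sigma)$ is in the convex hull of $\M(\Sigma,\sigma)\cup\{\delta_{\bar\infty}\}$), but it imports a result whose proof already contains essentially the same combinatorics. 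The paper does the opposite: it proves Lemma~\ref{lema_infi} directly and only afterward observes (Remark~\ref{rem:Fequiv}) that your affine homeomorphism follows as a corollary.

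For Part~\eqref{2}, your plan---extract one ergodic $\nu_0$ on $A_3$ via \cite[Proposition 4.19]{iv}, then run a Sigmund-type approximation of $(1-\varepsilon)\eta+\varepsilon\nu_0$ by a periodic orbit visiting $\infty$---would require re-running the shadowing in Proposition~\ref{prop:miss} while tracking that the orbit segment coming from the $\nu_0$-generic point contributes an $\infty$; this is doable but delicate, since $[\infty]_{\bar\Sigma}$ is closed and not open so mere weak$^*$ closeness does not force the approximant to charge it. The paper replaces all of this with a one-line construction (Lemma~\ref{lem:22}): given a periodic point $\overline{x_1\cdots x_n}\in\Sigma$, the failure of the $\F$-property together with transitivity yields a $\bar\Sigma$-admissible word ${\bf w}$ containing $\infty$ with endpoints $x_1,x_n$, and the periodic measures of $\overline{{\bf x}^k{\bf w}}$ lie in $\M(\bar\Sigma,\bar\sigma)\setminus\M(\Sigma,\sigma)$ and converge to the periodic measure of $x$ as $k\to\infty$. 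Density then follows immediately from the density of periodic measures of $\Sigma$ in $\M(\bar\Sigma,\bar\sigma)$.
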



A countable Markov shift that does not satisfy the $\mathcal{F}-$property admits sequences of invariant probability measures that converge, in the cylinder topology, to finitely additive measures that are not countably additive (see \cite[Proposition 4.19]{iv}). These limiting objects are identified with measures in $\M(\bar\Sigma, \bar\sigma)$ that lie outside the convex hull of $\M(\Sigma, \sigma) \cup \{\delta_{\bar\infty}\}$ (see Corollary \ref{nme}). To prove this, we relate the cylinder topology on the space of invariant sub-probability measures to the weak$^*$ topology on $\M(\bar\Sigma, \bar\sigma)$ (see Theorem \ref{topo:r}).

\subsection{\bf The space of ergodic measures} 
The Poulsen simplex plays a key role in the study of spaces of invariant measures. Originally, Poulsen \cite{pou} constructed a compact simplex within $\ell_2$ where the set of extreme points is dense. A few years later, Lindenstrauss, Olsen, and Sternfeld \cite{los} proved that any two metrizable simplices with a dense set of extreme points are affine homeomorphic. They proved several remarkable properties of the Poulsen simplex, for instance, that the Poulsen simplex is homogeneous, universal and that its set of extreme points is homeomorphic to $\ell_2$. As a consequence, the space of ergodic probability measures for subshifts of finite type is homeomorphic to $\ell_2$. In Section \ref{thespaceofmeasures}, we extend this result to countable Markov shifts.

\begin{theorem}\label{theo_l2} Let $(\Sigma,\sigma)$ be a transitive countable Markov shift. Then, the space of ergodic probability measures of $(\Sigma,\sigma)$ is homeomorphic to $\ell_2$.
\end{theorem}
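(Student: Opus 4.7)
The plan is to combine Theorem~\ref{thm:po} with the Lindenstrauss--Olsen--Sternfeld theorem \cite{los}, which asserts that the set of extreme points of the Poulsen simplex is homeomorphic to $\ell_2$. By Theorem~\ref{thm:po}, $\mathcal{M}(\bar{\Sigma},\bar{\sigma})$ is affine homeomorphic to the Poulsen simplex, so the space of its extreme points---which is exactly the space of ergodic measures $\mathcal{E}(\bar{\Sigma},\bar{\sigma})$---is homeomorphic to $\ell_2$.

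First I would identify $\mathcal{E}(\Sigma,\sigma)$ as an open subset of $\mathcal{E}(\bar{\Sigma},\bar{\sigma})$. Set $K := \bar{\Sigma}\setminus\Sigma$, which is closed and $\bar{\sigma}$-invariant. For ergodic $\mu \in \mathcal{E}(\bar{\Sigma},\bar{\sigma})$, either $\mu(K)=0$ (so $\mu$ is supported in $\Sigma$ and hence lies in $\mathcal{E}(\Sigma,\sigma)$) or $\mu(K)=1$; therefore
\[
\mathcal{E}(\Sigma,\sigma) = \{\mu \in \mathcal{E}(\bar{\Sigma},\bar{\sigma}) : \mu(K) < 1\}.
\]
Upper semicontinuity of $\mu \mapsto \mu(K)$ makes this set open in $\mathcal{E}(\bar{\Sigma},\bar{\sigma})\cong\ell_2$, so $\mathcal{E}(\Sigma,\sigma)$ is in particular an $\ell_2$-manifold.

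The argument now splits according to Theorem~\ref{thm:erg}. In the first case, where $(\Sigma,\sigma)$ satisfies the $\mathcal{F}$-property, $\mathcal{E}(\Sigma,\sigma) = \mathcal{E}(\bar{\Sigma},\bar{\sigma})\setminus\{\delta_{\bar{\infty}}\}$, and the classical fact that the unit sphere of $\ell_2$ is homeomorphic to $\ell_2$ (due to Klee and Bessaga) yields $\ell_2\setminus\{\mathrm{pt}\}\cong\ell_2$, concluding this case. In the second case, $\mathcal{E}(\Sigma,\sigma)$ is a dense open subset of $\ell_2$ whose complement is also dense. Here I would invoke Henderson's classification of $\ell_2$-manifolds: a contractible $\ell_2$-manifold is homeomorphic to $\ell_2$. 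The task therefore reduces to establishing contractibility of $\mathcal{E}(\Sigma,\sigma)$.

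The main obstacle is this contractibility (equivalently, the absolute retract property), since the complement $\{\mu \in \mathcal{E}(\bar{\Sigma},\bar{\sigma}) : \mu(K)=1\}$ is dense. Two strategies look viable. The first is to verify that this complement is a $\sigma$-$Z$-set (or $\sigma$-compact) in $\ell_2$, because removing such a set leaves a space homeomorphic to $\ell_2$ by Anderson-type negligibility theorems. The second is to build explicit homotopies inside $\mathcal{E}(\Sigma,\sigma)$, exploiting the density of periodic measures in $\mathcal{M}(\bar{\Sigma},\bar{\sigma})$---which all belong to $\mathcal{E}(\Sigma,\sigma)$---together with the homogeneity of the extreme points of the Poulsen simplex. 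Either way, the crucial input is a quantitative mechanism to approximate ergodic measures without ever passing through the boundary $K$, while remaining within the ergodic locus.
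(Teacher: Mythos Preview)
Your plan works cleanly in the $\mathcal{F}$-property case, but in the second case it is genuinely incomplete. You reduce to showing that $\mathcal{E}(\Sigma,\sigma)$, viewed as a dense open subset of $\mathcal{E}(\bar\Sigma,\bar\sigma)\cong\ell_2$, is contractible (or that its complement is a $\sigma$-$Z$-set), and then stop. Neither of your two strategies is carried out, and neither is automatic: the complement is $F_\sigma$ in $\ell_2$ (since $\mathcal{M}^e(\Sigma,\sigma)$ is $G_\delta$), but there is no reason it should be $\sigma$-compact, and verifying the $\sigma$-$Z$-set property would itself require an approximation argument of the kind you only allude to. A minor point: $K=\bar\Sigma\setminus\Sigma$ is not forward $\bar\sigma$-invariant (a point with $x_1=\infty$ and $x_i\in\N$ for $i\ge2$ maps into $\Sigma$), though your ergodic dichotomy is still correct via Birkhoff.

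The paper bypasses the case split entirely by working not inside $\mathcal{E}(\bar\Sigma,\bar\sigma)$ but inside the compact convex set $K=\mathcal{M}(\bar\Sigma,\bar\sigma)$ itself. It invokes a criterion of Bessaga--Pe\l czy\'nski \cite[Corollary~4.3, Chapter~IV]{bp}: if $B\subseteq K$ is contained in the extreme points, is $G_\delta$, and is a \emph{$T$-set} (meaning that for every $n$ the maps $I^n\to K$ with image in $B$ are dense in $C(I^n,K)$), then $B$ is homeomorphic to the pseudo-interior of the Hilbert cube, hence to $\ell_2$ by Anderson. The paper then proves two lemmas: $\mathcal{M}^e(\Sigma,\sigma)$ is $G_\delta$ in $\mathcal{M}(\bar\Sigma,\bar\sigma)$ (intersecting the usual $G_\delta$ description of ergodic measures with the open sets $\{\mu:\mu([\infty])<1/k\}$), and $\mathcal{M}^e(\Sigma,\sigma)$ is a $T$-set. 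The $T$-set lemma is proved by a simplicial approximation argument using the density of periodic measures of $(\Sigma,\sigma)$---this is exactly the ``quantitative mechanism to approximate while staying in $\mathcal{E}(\Sigma,\sigma)$'' you were looking for, but formulated at the level of maps from cubes into $K$ rather than homotopies inside $\mathcal{E}(\bar\Sigma,\bar\sigma)$. The advantage of the paper's route is that it is uniform: no dichotomy on the $\mathcal{F}$-property is needed, and the hard step (the $T$-set property) is a direct variant of the density-of-periodic-measures argument already used for Theorem~\ref{thm:po}.
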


It is a theorem of Anderson \cite{an} that $\ell_2$ is homeomorphic to $\R^\N$ with the product topology. Consequently, the space of ergodic invariant measures is always homeomorphic to a countable product of lines, regardless of whether the phase space is compact. Therefore, this result provides a topological characterization of the space of ergodic invariant measures. It also sheds light on the complicated nature of this space.

\subsection{Thermodynamic formalism}

The study of thermodynamic formalism in the context of (compact) subshifts of finite type is well established,  with a well-developed and largely complete theory. In contrast, despite significant progress in recent years, the theory for (non-compact) countable Markov shifts remains less developed. The lack of compactness in both the phase space and the space of invariant probability measures introduces significant difficulties, restricting the use of techniques that have proven effective in the compact setting. In this work, we present several applications of our compactification method within the framework of thermodynamic formalism for countable Markov shifts. These will allow for the application of methods and techniques used in the compact setting in this non-compact realm.

For a function (also called potential) $\phi:\Sigma\to\R$, we denote its pressure  by $P(\phi)$ (see Subsection \ref{tf} for details) and let $s_\infty(\phi)=\inf\{t\in[0,\infty): P(t\phi)<\infty\}$. We denote by $\text{UC}_d(\Sigma)$ the space of uniformly continuous real valued functions with respect to the metric $d$ defined in Section \ref{sec:pre} and by $C_b(\Sigma)$ the space of real valued continuous and bounded functions on $\Sigma$. In Section \ref{sec:dvp} we prove the following dual variational principle.

\begin{theorem}\label{theo:dual} Let $(\Sigma, \sigma)$ be a transitive countable Markov shift. Let $\phi\in\emph{UC}_d(\Sigma)$ be potential such that $s_\infty(\phi)<1$, $\sup \phi <\infty$ and $P(\phi)<\infty$. If $\mu \in \M(\Sigma,\sigma)$ satisfies that $\int \phi d\mu>-\infty$, then
\begin{equation*}
h(\mu) + \int \phi \, d \mu = \inf \left\{P(\phi+g) - 	\int g \, d \mu : g \in C_b(\Sigma)	\right\}.
\end{equation*}
\end{theorem}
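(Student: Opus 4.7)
The plan is to establish the two inequalities separately. For the easy direction $(\geq)$, observe that for any $g \in C_b(\Sigma)$, the variational principle for countable Markov shifts gives
\[
P(\phi + g) \;\geq\; h(\mu) + \int (\phi+g)\,d\mu \;=\; h(\mu) + \int \phi\,d\mu + \int g\,d\mu,
\]
so $P(\phi+g) - \int g\,d\mu \geq h(\mu) + \int \phi\,d\mu$ for every $g$, and hence for the infimum. Note this uses $\int\phi\,d\mu>-\infty$ and $\sup\phi<\infty$ so that the integrals are well defined.

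For the reverse inequality, I would transfer the problem to the compactification $(\bar\Sigma,\bar\sigma)$. Since $\phi \in \text{UC}_d(\Sigma)$, it extends continuously to a function $\bar\phi \in C(\bar\Sigma)$; likewise any $g \in C(\bar\Sigma)$ restricts to an element of $C_b(\Sigma)$. The measure $\mu$ extends to an element of $\M(\bar\Sigma,\bar\sigma)$ with the same Kolmogorov--Sinai entropy, since $\mu(\bar\Sigma\setminus\Sigma)=0$. Because $(\bar\Sigma,\bar\sigma)$ is a continuous map on a compact metric space, the classical dual variational principle (Walters) applies: denoting by $\bar P$ the pressure on $\bar\Sigma$,
\[
h(\mu) \;=\; \inf_{f\in C(\bar\Sigma)}\left[\bar P(f) - \int f\,d\mu\right].
\]
Substituting $f = \bar\phi + g$ for $g \in C(\bar\Sigma)$ and rearranging yields
\[
h(\mu) + \int \phi\,d\mu \;=\; \inf_{g\in C(\bar\Sigma)}\left[\bar P(\bar\phi+g) - \int g\,d\mu\right].
\]

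The key remaining step is to show that $\bar P(\bar\phi + g) = P(\phi + g)$ for every $g \in C(\bar\Sigma)$. The inequality $\bar P(\bar\phi+g) \geq P(\phi+g)$ is immediate from $\M(\Sigma,\sigma) \subset \M(\bar\Sigma,\bar\sigma)$. For the reverse, I would exploit the hypothesis $s_\infty(\phi) < 1$ together with $\sup\phi < \infty$ and the boundedness of $g$, which together imply a pressure gap at infinity of the type $P_\infty(\phi+g) < P(\phi+g)$. This gap yields upper semicontinuity of the affine functional $\bar\nu \mapsto h(\bar\nu) + \int(\bar\phi+g)\,d\bar\nu$ on the compact simplex $\M(\bar\Sigma,\bar\sigma)$, controlling the contribution of the ergodic measures in $\M(\bar\Sigma,\bar\sigma)\setminus\M(\Sigma,\sigma)$ identified in Theorem \ref{thm:erg}. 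Combined with the density of $\M(\Sigma,\sigma)$ in $\M(\bar\Sigma,\bar\sigma)$ from Theorem \ref{thm:po}, the supremum defining $\bar P$ is attained along sequences in $\M(\Sigma,\sigma)$, giving $\bar P(\bar\phi+g) = P(\phi+g)$. Plugging this into the identity above,
\[
h(\mu) + \int \phi\,d\mu \;=\; \inf_{g\in C(\bar\Sigma)}\left[P(\phi+g) - \int g\,d\mu\right] \;\geq\; \inf_{g\in C_b(\Sigma)}\left[P(\phi+g) - \int g\,d\mu\right],
\]
which combined with the easy direction completes the proof.

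The main obstacle is the pressure identification $\bar P(\bar\phi+g) = P(\phi+g)$. The combinatorial dichotomy of Theorem \ref{thm:erg} shows that, in the non-$\mathcal{F}$ case, there is a \emph{dense} set of ergodic measures in $\M(\bar\Sigma,\bar\sigma)\setminus\M(\Sigma,\sigma)$, any of which could in principle inflate $\bar P$. The assumption $s_\infty(\phi)<1$ is precisely what rules this out, by forcing a quantitative gap between the pressure and its value at infinity; this is the analytic content that makes the compactification compatible with thermodynamic formalism.
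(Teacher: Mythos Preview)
Your easy direction is fine and matches the paper. The hard direction, however, has two genuine gaps that break the argument.

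First, the claim that $\phi$ extends to $\bar\phi\in C(\bar\Sigma)$ is not justified and is in fact false in general. The compactification $\bar\Sigma$ is the completion of $\Sigma$ with respect to the metric $d_\rho$, not $d$; a function extends continuously to $\bar\Sigma$ exactly when it lies in $\text{UC}_{\rho,b}(\Sigma)$ (bounded and uniformly continuous for $d_\rho$). Since $d_\rho\le d$, the inclusion goes the wrong way: $\text{UC}_\rho(\Sigma)\subseteq \text{UC}_d(\Sigma)$, so $\phi\in\text{UC}_d(\Sigma)$ does not give $\phi\in\text{UC}_\rho(\Sigma)$. More fatally, only $\sup\phi<\infty$ is assumed, not $\inf\phi>-\infty$; a typical $\phi$ (e.g.\ locally constant with $\phi|_{[n]}=-n$) is unbounded below and cannot extend to any continuous function on the compact space $\bar\Sigma$. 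Without $\bar\phi$, the substitution $f=\bar\phi+g$ and the whole reduction to Walters' formula collapse.

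Second, even if an extension existed, the invocation of Walters' dual variational principle $h(\mu)=\inf_{f\in C(\bar\Sigma)}[\bar P(f)-\int f\,d\mu]$ requires that $(\bar\Sigma,\bar\sigma)$ have finite topological entropy and an upper semi-continuous entropy map. Neither is available here: the paper explicitly allows infinite entropy (e.g.\ the full shift), in which case $\bar P(f)=+\infty$ for every $f\in C(\bar\Sigma)$ and the formula is vacuous. The hypothesis $s_\infty(\phi)<1$ controls the free energy $\nu\mapsto h(\nu)+\int\phi\,d\nu$, not the raw entropy.

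The paper avoids both obstacles by never extending $\phi$ and never using the classical dual formula. Instead it takes the free energy $E(\nu)=h(\nu)+\int\phi\,d\nu$ on $\M_\phi(\Sigma,\sigma)$, which is upper semi-continuous by Theorem~\ref{thm:usc} under $s_\infty(\phi)<1$, forms its upper semi-continuous regularization $\bar E$ on the compact simplex $\M(\bar\Sigma,\bar\sigma)$, checks that $\bar E$ is affine and equals $-\infty$ off the convex hull of $\M_\phi(\Sigma,\sigma)\cup\{\delta_{\bar\infty}\}$, and then runs a direct Hahn--Banach separation of the point $(\mu,b)$ (with $b>E(\mu)$) from the hypograph of $\bar E$. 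The separating functional is represented by some $g\in C(\bar\Sigma)$, yielding $P(\phi+g)-\int g\,d\mu<b$. This produces the required $g$ directly, without any pressure identification on $\bar\Sigma$.
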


Inspired by its applications in statistical mechanics, convex analysis has been extensively and effectively employed in the study of thermodynamic formalism for subshifts of finite type. The dual variational principle establishes that the subdifferentials of the pressure functional correspond to equilibrium measures, a fundamental property underlying many applications of convex analysis in this field 
(see, for instance, \cite{is,ph,bcm,w2}). 

In Section \ref{sec:equi} we prove that every ergodic measure with a finite entropy assumption is the unique equilibrium measure of a continuous function (see Theorem \ref{eem}) and that the set of measures that are not equilibrium measures for any continuous function form a dense set (Theorem \ref{no-eq}).

\section{Preliminaries}\label{sec:pre}

In this section we review known facts about countable Markov shifts and their spaces of invariant measures. 

\subsection{Countable Markov Shifts} \label{cms}
Let  $M$ be a $\N \times  \N$ infinite matrix  with entries $0$ or $1$. The symbolic space associated to $M$ is defined by
 \begin{equation*}
 \Sigma=\left\{ (x_1, x_2, \dots) \in \N^{\N}: M(x_i,x_{i+1})=1 \text{ for every } i \in \N \right\}.
\end{equation*} 
We endow $\N$ with the discrete topology and $\N^{\N}$ with the product topology. On $\Sigma$ we consider the induced topology.  Note that, in general, this is a non-compact space.   The \emph{shift map} $\sigma:\Sigma \to \Sigma$ is defined by $\sigma(x)=(x_2,x_3,\ldots)$, where $x=(x_1, x_2, \dots ) \in \Sigma$. The dynamical system $(\Sigma,\sigma)$ is called  a  \emph{countable Markov shift}.

An \emph{admissible word} of length $N$ is a string ${\bf w} =a_1a_2\ldots a_{N}$ of letters in $\N$ such that $M(a_i,a_{i+1})=1$ for every $i\in\{1,\ldots,N-1\}$. A \emph{cylinder} of length $N$ is a set of the form 
\begin{equation*}
[a_1,\ldots,a_{N}]= \left\{ (x_1,x_2,\ldots)\in \Sigma :  x_i=a_i  \text{ for } 1 \le i \le N \right\}.
\end{equation*} 
Cylinder sets form a basis for the topology on $\Sigma$. 
We stress that if ${\bf w} =a_1a_2\ldots a_{N}$ is not an admissible word then the associated cylinder is empty.

A countable Markov shift $(\Sigma, \sigma)$ is associated with a directed graph $G = (V, E)$, where points in $\Sigma$ correspond to paths over the graph. More precisely, the set of vertices $V$ is identified with $\mathbb{N}$, and there is a directed edge $i \to j$ in $E$ if and only if $M(i, j) = 1$.

The space $\Sigma$ is metrizable and there are many different metrics that generate the topology. We will be interested in two particular metrics on this set. The first is the metric $d$ which is defined by: $d(x, y) = 0$ if $x = y$, and for $x \neq y$, let $d(x, y) = \frac{1}{2^{m-1}}$, where $m = \min \{i \in \mathbb{N} : x_i \neq y_i\}$. Let $\rho$ be the following metric in $\N$:
\begin{equation*}
\rho(a, b)=
\begin{cases}
\left|\frac1a-\frac1b\right| & \text{ if } a, b\in \N \text{ and } a\neq b;\\
0 & \text{ if } a=b.
\end{cases}
\end{equation*}
The second metric is $d_{\rho}$ and it is  defined for $ x=(x_1, x_2, \ldots),  y= (y_1, y_2, \ldots)\in \Sigma,$ by
\begin{equation}\label{eq_metric}
d_{\rho}( x,  y)=\sum_{n\in\N}\frac{1}{2^n}\rho(x_n, y_n).
\end{equation}
Similarly, any finite diameter metric on $\N$ can be used to induce a metric on $\Sigma$.

Several spaces of functions will be considered in this article. Let $C(\Sigma)$ denote the space of real-valued continuous functions on $\Sigma$, and let $C_b(\Sigma)$ be the subspace of bounded functions. Denote by $\text{UC}_d(\Sigma)$ and $\text{UC}_\rho(\Sigma)$  the spaces of uniformly continuous functions with respect to the metrics $d$ and    $d_\rho$, respectively.  Observe that if $d(x, y) = \frac{1}{2^m}$, then $d_\rho(x, y) \leq \frac{1}{2^m}$, thus $d_\rho(x, y) \leq d(x, y).$ 
This implies that   $\text{UC}_\rho(\Sigma) \subseteq \text{UC}_d(\Sigma)$.

For a function $\phi:\Sigma\to \R$ we define its $n$th-variation by $$\text{var}_n(\phi)=\sup\{|\phi(x)-\phi(y)|: d(x,y)\le 2^{-n}\}.$$
Note that $\phi\in \text{UC}_d(\Sigma)$ if and only if $\lim_{n\to \infty}\text{var}_n(\phi)=0$. We say that $\phi$ is \emph{weakly H\"older} if there exist $C>0$ and $\lambda\in (0,1)$ such that $\text{var}_n(\phi)\le C\lambda^n$ for all $n\ge 2$.
      
The system  $(\Sigma,\sigma)$ is \emph{topologically transitive} (or transitive) if for every pair $x,y\in \N$, there exists an admissible word connecting $x$ with $y$. We say that $(\Sigma, \sigma)$ is \emph{topologically mixing} (or mixing) if for every pair $x,y\in \N$, there exists $N \in \N$ such that for every $n \ge N$, there is an admissible word of length $n$ connecting $x$ with $y$. These definitions coincide with the classical notions of topological transitivity and mixing for topological dynamical systems. 

\begin{remark} Throughout this article, we assume that $(\Sigma, \sigma)$ is a transitive countable Markov shift and, in particular, that $\Sigma$ is a non-compact space.
\end{remark}


\subsection{The weak$^*$  and the cylinder topologies} \label{topologies}
Denote by \(\mathcal{M}(\Sigma, \sigma)\) the space of \(\sigma\)-invariant Borel probability measures on \(\Sigma\), and by \(\mathcal{M}_{\leq 1}(\Sigma, \sigma)\) the space of \(\sigma\)-invariant Borel sub-probability measures on \(\Sigma\). Every measure in \(\mathcal{M}_{\leq 1}(\Sigma, \sigma)\), except for the zero measure, can be uniquely written as \(\lambda \mu\), where \(\lambda \in (0,1]\) and \(\mu \in \mathcal{M}(\Sigma, \sigma)\).  We denote by \(\mathcal{M}^e(\Sigma, \sigma)\) the set of ergodic probability measures of \((\Sigma, \sigma)\).   In this work, we consider two distinct notions of convergence for measures.

\begin{definition}
A sequence of measures $(\mu_n)_n$ in  $\M(\Sigma,{\sigma})$ converges weak$^*$ to a measure $\mu$  if for every $f \in C_b(\Sigma)$ we have 
\begin{equation*}
\lim_{n \to \infty} \int f \, d \mu_n = \int f \, d \mu.
\end{equation*}
\end{definition}


This notion of convergence defines a metrizable topology on $\mathcal{M}(\Sigma, \sigma)$, known as the weak$^*$ topology. Since the constant function equal to one is an element of $C_b(\Sigma)$, it follows that if a sequence in $\mathcal{M}(\Sigma, \sigma)$ converges to a measure $\mu$ in the weak$^*$ topology, then $\mu \in \mathcal{M}(\Sigma, \sigma)$.  When $\Sigma$ is compact, the space $\mathcal{M}(\Sigma, \sigma)$ is also compact (see \cite[Theorem 6.10]{w}). However, if $(\Sigma, \sigma)$ is non-compact and transitive, then $\mathcal{M}(\Sigma, \sigma)$ is non-compact. On the other hand, $\mathcal{M}(\Sigma, \sigma)$ is a convex set whose extreme points are the ergodic measures (see \cite[Theorem 6.10]{w}). 

The following notion of convergence of measures was considered and studied in \cite{iv}.

\begin{definition}
Let $(\mu_n)_n$ and $\mu$ be measures in $\M_{\le 1}(\Sigma,\sigma)$. We say that $(\mu_n)_n$ converges on {cylinders}  to $\mu$ if for every cylinder $C \subseteq \Sigma$ we have
\begin{equation*}
\lim_{n \to \infty}  \mu_n(C)=  \mu(C).
\end{equation*}
\end{definition}

This notion of convergence defines a metrizable topology on $\mathcal{M}_{\leq 1}(\Sigma,\sigma)$ called the cylinder topology. A key advantage of this topology is that it captures the escape of mass (see \cite{itv} for applications). More precisely, a sequence of invariant probability measures can converge in the cylinder topology to a sub-probability measure. Furthermore,  when the space $\Sigma$ is locally compact, the cylinder topology coincides with the {vague} topology (see \cite[Section 3 and Lemma 3.18]{iv} for precise definitions and details). 

\begin{remark} If $\mu \in \mathcal{M}(\Sigma, \sigma)$ and $(\mu_n)_n$ is a sequence of measures in $\mathcal{M}(\Sigma, \sigma)$, then $(\mu_n)_n$ converges to $\mu$ in the weak$^*$ topology if and only if it converges to $\mu$ on cylinders (see \cite[Lemma 3.17]{iv}). In other words, when there is no escape of mass, the cylinder and the weak$^*$ topologies coincide. \end{remark}

\subsection{Thermodynamic formalism} \label{tf}
In this section, we briefly review the basic properties of the thermodynamic formalism in the non-compact setting under consideration. The \emph{pressure} of a function (or, equivalently, a potential) $\phi \in C(\Sigma)$ is defined by
\begin{equation*}
P(\phi)= \sup \left\{h(\mu) + \int  \phi \, d \mu :\mu \in \M(\Sigma,\sigma) \text{ such that } \int \phi d \mu> -\infty \right\},
\end{equation*}
where $h(\mu)$ denotes the entropy of the measure $\mu$ (see \cite[Chapter 4]{w}). For a function $\phi\in C(\Sigma)$, we define $$\M_\phi(\Sigma,\sigma)=\left\{\mu \in \M(\Sigma,\sigma) : \int \phi d \mu> -\infty \right\}.$$

A measure $\mu \in \mathcal{M}_\phi(\Sigma, \sigma)$ that attains the supremum in the definition of the pressure of $\phi$ is called an \emph{equilibrium measure} for $\phi$. A fundamental result in this context is that if  $(\Sigma,\sigma)$ is a transitive countable Markov shift and $\phi$ has summable variations, then $\phi$ has at most one equilibrium state (see \cite[Theorem 1.1]{bs}).

Analogous to the definition of topological pressure, we introduce a notion of pressure that quantifies the concentration of free energy in the non-compact regions of the phase space. The \emph{pressure at infinity} of a function \( \phi \in C(\Sigma) \) is defined by  
\[
P_\infty(\phi) = \sup_{(\mu_n)_n \to 0} \limsup_{n \to \infty} \left( h({\mu_n}) + \int \phi \, d\mu_n \right),
\]  
where the supremum is taken over all sequences \( (\mu_n)_n \) in \( \M_\phi(\Sigma, \sigma) \)  which converge to the zero measure in the cylinder topology. If no such sequence exists, we set \( P_\infty(\phi) = -\infty \). 

For a function $\phi \in C(\Sigma)$, we define $$s_\infty(\phi )=\inf\{t\in [0,\infty):P(t\phi )<\infty\}.$$
Note that if $\phi$ has finite pressure, then $s_\infty(\phi)\in [0,1]$. 


For our applications, we will need the  following result which relates the escape of mass phenomenon with the upper semi-continuity of the pressure map (see \cite[Theorem 1.3]{v}). 

\begin{theorem} \label{thm:usc}  Let $(\Sigma,\sigma)$ be a transitive countable Markov shift and $\phi\in \emph{UC}_d(\Sigma)$ a potential such that $\emph{var}_2(\phi)<\infty$ and $P(\phi)<\infty$.  Let $(\mu_n)_n$ be a sequence  in $\M_\phi(\Sigma,\sigma)$ which converges on cylinders to $\lambda\mu$, where $\lambda\in [0,1]$ and $\mu\in\M_\phi(\Sigma,\sigma)$. Suppose that $\liminf_{n\to\infty}\int \phi  d\mu_n>-\infty$, or that $s_\infty(\phi)<1$. Then,
$$\limsup_{n\to\infty}\bigg(h({\mu_n})+\int \phi d\mu_n\bigg)\le \lambda\bigg(h(\mu)+\int \phi d\mu\bigg)+(1-\lambda)P_\infty(\phi).$$
Moreover, the inequality is sharp.
\end{theorem}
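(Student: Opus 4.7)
The plan is to first check that the inequality is sharp, then prove the upper bound by disposing of the two extreme cases $\lambda=0$ and $\lambda=1$ and finally handling the intermediate case via a decomposition of each $\mu_n$.

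Sharpness is the easy part: fix $\mu\in\mathcal{M}_\phi(\Sigma,\sigma)$ and a sequence $(\eta_n)_n\subset\mathcal{M}_\phi(\Sigma,\sigma)$ converging on cylinders to the zero measure that realizes the supremum defining $P_\infty(\phi)$. The convex combinations $\mu_n=\lambda\mu+(1-\lambda)\eta_n$ are invariant probability measures, converge on cylinders to $\lambda\mu$, and by affineness of entropy and linearity of the integral attain equality in the bound.

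For the inequality, the case $\lambda=0$ is immediate from the definition of $P_\infty(\phi)$. The case $\lambda=1$ is upper semi-continuity of the free energy when there is no escape of mass; I would prove this by a partition argument using $\alpha_N=\{[1],\dots,[N],B_N\}$ with $B_N=\Sigma\setminus\bigcup_{i\le N}[i]$. Since $\mu_n(B_N)\to\mu(B_N)\to 0$ as $N\to\infty$ and the system restricted to the symbols $\{1,\dots,N\}$ embeds into a compact sofic-type system on which entropy is upper semi-continuous for weak$^*$ convergence, one controls $h(\mu_n)$ on the visible part and uses $\mathrm{var}_2(\phi)<\infty$, $\phi\in\mathrm{UC}_d(\Sigma)$, together with the extra hypothesis ($\liminf\int\phi\,d\mu_n>-\infty$ or $s_\infty(\phi)<1$), to control the tail contribution of $\phi$ over $B_N$.

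For $0<\lambda<1$, the key step is to decompose each $\mu_n$ as $\mu_n=\lambda_n\nu_n+(1-\lambda_n)\eta_n$ with $\nu_n,\eta_n\in\mathcal{M}(\Sigma,\sigma)$, $\lambda_n\to\lambda$, $\nu_n\to\mu$ on cylinders, and $\eta_n\to 0$ on cylinders. Since the naive restriction of $\mu_n$ to a finite subsystem is not invariant, I would construct $\nu_n$ and $\eta_n$ via an inducing scheme: take the first-return map of $\mu_n$ to the set $A_N=\bigcup_{i\le N}[i]$ for a suitable $N=N(n)\to\infty$, renormalise to obtain $\nu_n$ on the returns and $\eta_n$ on the excursions, and use the Abramov formula to relate the entropies and potential integrals. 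With such a decomposition in hand, affineness of entropy yields
\[
h(\mu_n)+\int\phi\,d\mu_n = \lambda_n\Bigl(h(\nu_n)+\int\phi\,d\nu_n\Bigr) + (1-\lambda_n)\Bigl(h(\eta_n)+\int\phi\,d\eta_n\Bigr),
\]
and passing to $\limsup$ with the $\lambda=1$ case applied to $(\nu_n)_n$ together with the definition of $P_\infty(\phi)$ for $(\eta_n)_n$ produces the desired inequality.

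The main obstacle is producing the decomposition invariantly so that the Abramov correction terms vanish in the limit, and this is precisely where the assumption $s_\infty(\phi)<1$ (or the $\liminf$ condition) is essential: without it, the potential on the escaping part can be driven to $-\infty$ faster than entropy compensates, destroying the additivity and allowing the decomposition to lose mass in the free-energy budget. A secondary delicate point is a tightness-type estimate for the visible parts $\nu_n$, needed so that the partition-entropy approximation of the $\lambda=1$ case applies uniformly in $n$.
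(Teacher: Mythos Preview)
The paper does not prove this theorem. Theorem~\ref{thm:usc} is quoted in the preliminaries as \cite[Theorem~1.3]{v} and used as a black box; there is no proof of it in this paper to compare against. Any comparison would have to be made with the argument in \cite{v}.

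That said, your proposal has a genuine gap in the decomposition step for $0<\lambda<1$. You want to write each $\mu_n$ as $\lambda_n\nu_n+(1-\lambda_n)\eta_n$ with $\nu_n,\eta_n\in\mathcal{M}(\Sigma,\sigma)$, $\nu_n\to\mu$ and $\eta_n\to 0$ on cylinders. If $\mu_n$ is ergodic this is impossible unless the decomposition is trivial, and nothing in the hypotheses prevents the $\mu_n$ from being ergodic. Inducing on $A_N=\bigcup_{i\le N}[i]$ does not produce such a splitting: the induced measure, pushed back via Kac, returns $\mu_n$ itself, not a pair of invariant measures separating ``visible'' and ``escaping'' mass. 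What one actually obtains from inducing is a single invariant measure on the induced system together with Abramov-type identities relating its entropy and potential integral to those of $\mu_n$; the contribution of long excursions must then be bounded directly (this is where $s_\infty(\phi)<1$ or the $\liminf$ condition enters), rather than packaged as a separate invariant $\eta_n$. Your sharpness argument and the treatment of the endpoint cases $\lambda=0,1$ are reasonable sketches, but the heart of the matter---controlling the free energy carried by the escaping mass without an honest convex decomposition of $\mu_n$---is not addressed by the scheme you describe.
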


\subsection{The $\F-$property.}\label{F}
In this section, we review some essential facts about the $\mathcal{F}$-property, a combinatorial condition introduced and studied in \cite{iv}.

\begin{definition} \label{def:F}  
A countable Markov shift is said to satisfy the $\F-$\emph{property} if for every element of the alphabet $a$ and natural number $n$ there are only finitely many admissible words of length $n$ that begin and end with $a$. Equivalently, if for every element of the alphabet $a$ and natural number $n$ there are only finitely many periodic orbits of length $n$ that intersect $[a]$.
\end{definition}

Every countable Markov shift with finite entropy and every locally compact countable Markov shift satisfy the $\mathcal{F}-$property. The full shift on a countable alphabet, however, does not satisfy the $\mathcal{F}$-property. On the other hand, there are countable Markov shifts with infinite entropy that are not locally compact but still satisfy the $\mathcal{F}$-property.

\begin{example}[Loop systems]\label{bouquet}
Consider a countable Markov shift defined by a graph consisting of simple loops, all based at a common vertex and otherwise disjoint. This structure is called a loop system. A loop system is characterized by a sequence $(a_n)_{n \in \mathbb{N}}$ in $\mathbb{N} \cup \{\infty\}$, where $a_n$ denotes the number of simple loops of length $n$. A loop system:
\begin{enumerate}
    \item satisfies the $\mathcal{F}$-property if and only if $a_n \in \mathbb{N}$ for every $n \in \mathbb{N}$
    \item has finite entropy if and only if $a_n \in \mathbb{N}$ for all $n \in \mathbb{N}$ and $\limsup_{n \to \infty} \frac{1}{n} \log a_n < \infty$
    \item is locally compact if and only if $\sum_{n \in \mathbb{N}} a_n$ is finite.
\end{enumerate}
In particular, there exist loop systems that satisfy the $\mathcal{F}-$property but are neither locally compact nor of finite entropy.
\end{example}

A complete description of the topology and convex structure of $\M_{\leq 1}(\Sigma, \sigma)$ is available when $(\Sigma, \sigma)$ satisfies the $\mathcal{F}$-property. It has been shown that this space is affine homeomorphic to a special simplex, which we now proceed to define.

\begin{definition} \label{poulsen}
A metrizable convex compact Choquet simplex $K$ with at least two points  is a \emph{Poulsen simplex} if its extreme points are dense in $K$.
\end{definition} 

The first example of such a simplex was constructed by Poulsen in 1961 \cite{pou}. In 1978, Lindenstrauss, Olsen, and Sternfeld proved that the Poulsen simplex is unique up to affine homeomorphism (see \cite[Theorem 2.3]{los}). They also proved several striking properties of this simplex. For example, they showed that the set of extreme points forms a dense $G_{\delta}$-set that is path-connected (see \cite[Section 3]{los}). Consequently, the Poulsen simplex has the remarkable property that it is a convex set in which, from a topological perspective, most points are extreme points. 

It is well known that the space of invariant probability measures is a Choquet simplex, where its extreme points are the ergodic measures (see \cite[Section 6.2]{w}). In the early 1970s, Sigmund \cite{si} proved that for Axiom A diffeomorphisms and for sub-shifts of finite type, the set of ergodic measures is dense. This was also observed by Ruelle \cite[Section 6]{r} (see also the work of Olsen \cite[pp. 45-46]{ol}).  By \cite[Theorem 2.3]{los}, this shows that the space of invariant probability measures is a Poulsen simplex.

The first part of the following result follows from \cite[Theorem 4.18, Proposition 4.19]{iv}, while the second part is from \cite[Theorem 5.1]{iv}.

\begin{theorem} \label{sub:comp}
Let $(\Sigma, \sigma)$ be a transitive countable Markov shift. Then, $\M_{\leq 1}(\Sigma,\sigma)$ is compact with respect to the cylinder topology if and only if $(\Sigma, \sigma)$ satisfies the $\mathcal{F}-$property. Moreover, if $(\Sigma, \sigma)$ satisfies the $\mathcal{F}-$property, then $\M_{\leq 1}(\Sigma, \sigma)$ is affine homeomorphic to the Poulsen simplex, and $\M(\Sigma, \sigma)$ is a dense subset.
\end{theorem}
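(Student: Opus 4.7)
My plan is to treat the compactness dichotomy and the Poulsen identification separately; the latter will follow from the Lindenstrauss--Olsen--Sternfeld uniqueness theorem once compactness is in hand and the extreme points are shown to be dense.

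For the implication $\F \Rightarrow$ compactness, I would start with any sequence $(\mu_n)_n$ in $\M_{\le 1}(\Sigma,\sigma)$ and, since the cylinders form a countable family, extract by diagonalisation a subsequence along which $\mu_n(C) \to \mu^*(C)$ for every cylinder $C$. The task is to promote $\mu^*$ to a countably additive $\sigma$-invariant sub-probability measure. For a fixed cylinder $[a]$, partition $[a]$ by the first-return time $\tau_{[a]}$: writing $A_k = \{x \in [a] : \tau_{[a]}(x) = k\}$, a one-sided Kac-type telescoping using the identity $\sigma^{-1}(E_{n-1}) = E_n \sqcup A_n$ (where $E_n$ is the set of first-hitting time $n$) yields $\sum_k k\, \mu_n(A_k) \le \mu_n(\Sigma) \le 1$, hence the uniform tail bound $\sum_{k > N} \mu_n(A_k) \le 1/N$. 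The crucial use of the $\F$-property is that each $A_k$ is a \emph{finite} union of cylinders, namely those cut out by the first-return words of length $k+1$ based at $a$; so $\mu_n([a])$ is determined, up to an error of order $1/N$, by the $\mu_n$-mass of finitely many cylinders. Letting $n \to \infty$ and then $N \to \infty$ gives $\mu^*([a]) = \sum_k \mu^*(A_k)$, i.e., $\sigma$-additivity of $\mu^*$ at $[a]$; the same argument applied to deeper cylinders yields $\sigma$-additivity on the full cylinder algebra, and Carath\'eodory extension provides the required measure. Invariance of $\mu^*$ is inherited from the $\mu_n$, and $\mu^*(\Sigma) \le 1$ follows from Fatou.

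For the converse, suppose $\F$ fails, so that some $a \in \N$ and some $n \in \N$ admit infinitely many distinct periodic orbits $\{O_k\}_k$ of length $n$ through $[a]$. The associated periodic measures $\mu_{O_k}$ each place mass at least $1/n$ on $[a]$. After a diagonal subsequence with $\mu_{O_k}(C) \to \mu^*(C)$ for every cylinder $C$, I would exploit the pairwise distinctness of the defining words $a\, b_1^k \cdots b_{n-1}^k$: a further extraction on each of the finitely many positions forces at least one coordinate $j_0$ with $b_{j_0}^k \to \infty$. For every fixed $b \in \N$ that coordinate then contributes nothing to $\mu^*([a,b])$ in the limit, giving $\sum_b \mu^*([a,b]) < \mu^*([a])$. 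Thus $\mu^*$ fails countable additivity at $[a]$, does not correspond to any element of $\M_{\le 1}(\Sigma,\sigma)$, and the original sequence admits no convergent subsequence.

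For the Poulsen identification under $\F$, the space $\M_{\le 1}(\Sigma,\sigma)$ is a compact metrizable convex set by the first part. Its simplex structure follows by combining the ergodic decomposition of each probability measure with the rescaling $\nu = \|\nu\| \cdot (\nu/\|\nu\|)$, which produces a unique representing measure on $\{0\} \cup \M^e(\Sigma,\sigma)$; these are also precisely the extreme points. By \cite[Theorem 2.3]{los} it remains to show that the extreme points are dense. Density of $\M^e(\Sigma,\sigma)$ in $\M(\Sigma,\sigma)$ should proceed via a Sigmund-style periodic shadowing argument, which transfers to the countable Markov setting since transitivity lets any admissible word be closed into a periodic loop. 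To approximate an arbitrary $\nu = \lambda \mu$ with $\lambda \in (0,1)$ by a single ergodic measure, I would splice a long shadowing loop for $\mu$ with a much longer simple loop based at a symbol escaping to infinity in the alphabet, the lengths taken in ratio $\lambda : (1-\lambda)$: the outer loop carries asymptotic mass $1-\lambda$, but its contribution to any fixed cylinder vanishes as the base symbol escapes, leaving only $\lambda\mu$ visible in the cylinder topology. Density of $\M(\Sigma,\sigma)$ in $\M_{\le 1}(\Sigma,\sigma)$ is then automatic. I expect this splicing construction to be the most delicate step, since one must ensure the spliced orbit is admissible in the Markov graph and that the transitivity-supplied connecting segments contribute only a vanishing fraction of the cylinder mass.
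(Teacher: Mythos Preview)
The paper does not prove this theorem; it is quoted from the authors' earlier work \cite{iv} (specifically Theorem 4.18, Proposition 4.19, and Theorem 5.1 there), so there is no in-paper argument to compare against line by line. Your outline is the natural direct approach and matches what one expects from \cite{iv}: the first-return decomposition is exactly the mechanism by which the $\F$-property converts cylinder-convergence into countable additivity, and the Sigmund-plus-splicing scheme is the standard route to density of extreme points in $\M_{\le 1}$.

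A few points need more care than your sketch provides. Your telescoping identity $\sigma^{-1}(E_{n-1}) = E_n \sqcup A_n$ gives $\mu(E_n) = \mu([a]) - \sum_{k \le n}\mu(A_k)$, which is Poincar\'e recurrence; the Kac bound $\sum_k k\,\mu(A_k) \le 1$ only follows after you additionally sum the disjoint $E_k$ over $k$. The claim ``invariance of $\mu^*$ is inherited from the $\mu_n$'' hides the same interchange-of-limit-and-sum you just worked to justify: Fatou gives only $\mu^*(\sigma^{-1}C) \le \mu^*(C)$, and you recover equality from the global constraint $\sum_C \mu^*(\sigma^{-1}C) = \mu^*(\Sigma) = \sum_C \mu^*(C)$ over cylinders of a fixed length. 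For the converse direction, you should first reduce to \emph{first-return} words of the minimal length witnessing the failure of $\F$, so that $a$ occurs exactly once per period; otherwise $\mu_k([a,b])$ may pick up contributions from other occurrences of $a$ in the cycle and the computation $\mu^*([a,b]) = 0$ is not clean.

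It is worth noting that the present paper supplies an \emph{alternative} route to the Poulsen half of the statement: Theorem \ref{thm:po} shows $\M(\bar\Sigma,\bar\sigma)$ is Poulsen for every transitive countable Markov shift, and under the $\F$-property Lemma \ref{lema_infi} and Theorem \ref{topo:r} combine (see Remark \ref{rem:Fequiv}) to give an affine homeomorphism $\M(\bar\Sigma,\bar\sigma) \to \M_{\le 1}(\Sigma,\sigma)$ sending $\delta_{\bar\infty}$ to the zero measure. This recovers the Poulsen identification without redoing the splicing inside $\M_{\le 1}$, though it does not by itself yield the compactness dichotomy.
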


In particular, if $(\Sigma,\sigma)$ has the $\F-$property, then $\M_{\le1}(\Sigma,\sigma)$ is a compactification of $\M(\Sigma,\sigma)$. In this article, we study a compactification of the space of invariant probability measures that  is applicable to all countable Markov shifts. This compactification is affine homeomorphic to $\M_{\leq 1}(\Sigma,\sigma)$ when $(\Sigma,\sigma)$ satisfies the $\mathcal{F}-$property (see Remark \ref{rem:Fequiv}), recovering Theorem \ref{sub:comp} as a particular case.

\subsection{Finite uniform Rome}

We now introduce another combinatorial property of $(\Sigma,\sigma)$, originally studied by Cyr \cite{c} in the context of the thermodynamic formalism for countable Markov shifts. A similar notion was introduced in the context of interval maps in \cite{blo} (see also \cite{bt}).

\begin{definition} \label{rome}
Let $G = (V, E)$ be the directed graph associated with a countable Markov shift $(\Sigma, \sigma)$. A subset $F \subseteq V$ is called a \emph{uniform Rome} if there exists $N \in \mathbb{N}$ such that $V \setminus F$ contains no paths in $G$ of length greater than $N$. A \emph{finite uniform Rome} is a uniform Rome where $F$ is finite.
\end{definition}

The following result summarizes the properties of systems with a finite uniform Rome. We are particularly interested in the relationship between systems with a finite uniform Rome and the existence of sequences of invariant probability measures converging to the zero measure in the cylinder topology. For completeness, we also present results concerning the dynamical properties of a class of regular potentials. 

Let $\Phi$ denote the collection of all weakly H\"older functions with finite pressure. Functions in $\Phi$ with good thermodynamic properties--those with stable equilibrium measures--are called \emph{strongly positive recurrent} (SPR). Functions with bad thermodynamic properties--those with no conformal measures--are called \emph{transient}. For precise definitions, see \cite{s2}. Note that, in general, there exist potentials in $\Phi$ that are neither transient nor SPR.

\begin{theorem}\label{cyr} Let $(\Sigma,\sigma)$ be a transitive countable Markov shift and $G$ its corresponding directed graph. Then, the following statements are equivalent:
\begin{enumerate}
\item\label{a} The graph $G$ has a finite uniform Rome.
\item\label{b} The set of transient potentials in $\Phi$  is empty.
\item\label{c} Every potential in $\Phi$ is SPR. 
\item\label{d} There is no sequence in $\M(\Sigma,\sigma)$ which converges on cylinders to the zero measure. 
\end{enumerate}
\end{theorem}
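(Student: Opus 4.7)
The plan is to split the four-way equivalence into the block $(a) \Leftrightarrow (b) \Leftrightarrow (c)$, which is Cyr's theorem \cite{c} on strong positive recurrence for countable Markov shifts, and the link $(a) \Leftrightarrow (d)$, which I would prove directly by a mass-distribution and periodic-measure argument. Cyr's theorem proceeds by inducing any weakly H\"older potential through a Rome: a finite uniform Rome produces a spectral-gap-type return operator on a finite-state subsystem, forcing SPR for every potential in $\Phi$; conversely, its absence allows one to construct a transient weakly H\"older potential by placing exponentially-large weights along unbounded excursions that avoid any fixed finite set of vertices. I would cite these implications and concentrate on the new links with $(d)$.

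For $(a) \Rightarrow (d)$: let $F \subseteq V$ be a finite uniform Rome with constant $N$. Then no orbit segment $x_i, x_{i+1}, \ldots, x_{i+N+1}$ can lie entirely outside $F$, since otherwise it would be an admissible path of length $N+1 > N$ in $V \sm F$. Writing $A_F := \bigcup_{a \in F}[a]$, this yields the pointwise bound $\sum_{j=0}^{N+1} \mathbf{1}_{A_F}(\sigma^j x) \geq 1$, and integrating against $\mu \in \M(\Sigma, \sigma)$ together with $\sigma$-invariance gives $\mu(A_F) \geq 1/(N+2)$ for every $\mu \in \M(\Sigma,\sigma)$. Since $A_F$ is a finite union of cylinders, convergence $\mu_n \to 0$ in the cylinder topology would force $\mu_n(A_F) \to 0$, contradicting this uniform lower bound; hence no such sequence exists.

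For the reverse direction $\neg(a) \Rightarrow \neg(d)$, I would construct a sequence of periodic invariant measures escaping on all cylinders. Enumerate $V = \N$ and set $F_k = \{1, \ldots, k\}$. Since $F_k$ is not a uniform Rome, admissible paths of arbitrary length exist in $V \sm F_k$. Selecting such a path of length $N_k$ with endpoints $v_k, w_k \in V \sm F_k$, transitivity supplies a closing admissible path from $w_k$ back to $v_k$ of some length $c_k$, so the resulting periodic orbit has length $N_k + c_k$, and its uniform periodic measure $\mu_k$ satisfies $\mu_k(A_{F_k}) \leq c_k/(N_k + c_k)$, because visits to $F_k$ occur only along the closing segment. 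If the endpoints can be arranged so that $v_k, w_k$ admit in-complement paths of arbitrarily large length while keeping $c_k$ bounded, then choosing $N_k \geq k c_k$ yields $\mu_k(A_{F_k}) \leq 1/(k+1)$. For any fixed cylinder $[a_1, \ldots, a_m]$ and every $k \geq a_1$, one has $[a_1, \ldots, a_m] \subseteq [a_1] \subseteq A_{F_k}$, so $\mu_k([a_1, \ldots, a_m]) \to 0$, proving that $\mu_k$ converges to the zero measure on cylinders.

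The main obstacle is the control of the closing length $c_k$ relative to $N_k$: a priori, $c_k$ depends on the endpoints of the path in $V \sm F_k$, and those endpoints may drift as $N_k$ grows. To address this, I would case-analyze the structure of $V \sm F_k$: if it contains an infinite strongly connected component, arbitrarily long cycles live entirely in $V \sm F_k$ and the closing is trivial ($c_k = 0$); otherwise, either a fixed pair of vertices admits unbounded connecting paths inside $V \sm F_k$, which allows $c_k$ to be held constant while $N_k \to \infty$, or one extracts a suitable sequence of endpoints diagonally and absorbs the growth of $c_k$ into a still-slower choice of $N_k$. The collapse of all three cases to a uniform bound on path length in $V \sm F_k$ would contradict $\neg(a)$, which is how the absence of a finite uniform Rome gets exploited quantitatively; this combinatorial step is the real technical heart of the argument.
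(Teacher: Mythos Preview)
The paper's treatment of this theorem is entirely by citation: $(a)\Leftrightarrow(b)$ is Cyr \cite[Theorem 2.1]{c}, $(b)\Leftrightarrow(c)$ is Cyr--Sarig \cite[Theorem 2.3]{cs} (not Cyr alone, as you wrote), and $(c)\Leftrightarrow(d)$ is Velozo \cite[Theorem 4.3]{v}. So the paper does not attempt any direct combinatorics; the link to $(d)$ passes through the SPR machinery.

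Your route is genuinely different: you keep the citations for the $(a)\Leftrightarrow(b)\Leftrightarrow(c)$ block but replace the passage through SPR by a direct combinatorial argument for $(a)\Leftrightarrow(d)$. The implication $(a)\Rightarrow(d)$ is clean and correct: the pointwise bound $\sum_{j=0}^{N+1}\mathbf 1_{A_F}(\sigma^jx)\ge 1$ integrates to $\mu(A_F)\ge 1/(N+2)$ uniformly over $\M(\Sigma,\sigma)$, which blocks convergence to zero on cylinders.

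The gap is in $\neg(a)\Rightarrow\neg(d)$, and you identify it yourself. The first two cases of your trichotomy are fine. But the third case --- ``extract a suitable sequence of endpoints diagonally and absorb the growth of $c_k$'' --- is not an argument, it is a hope. When $V\setminus F_k$ is acyclic and no fixed pair $(v,w)$ admits unboundedly long connecting paths inside it, the long paths are forced to have endpoints drifting through $V\setminus F_k$, and nothing you have written bounds the return length $d(w_k,v_k)$ against $N_k$. You cannot simply choose $N_k$ after seeing $c_k$, because $c_k$ depends on the endpoints, which depend on which long path you picked, which depends on $N_k$; this circularity does not unwind just by ``taking $N_k$ large''. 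The paper sidesteps the issue entirely by routing through $(c)\Leftrightarrow(d)$ from \cite{v}, where the argument uses pressure-at-infinity rather than a closing lemma. If you want to keep your direct route you must actually carry out the combinatorics of the third case, and that is where the content lies.
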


The equivalence of statements $(a)$ and $(b)$ was established by Cyr in \cite[Theorem 2.1]{c}. The equivalence between $(b)$ and $(c)$ follows from \cite[Theorem 2.3]{cs}, while the equivalence of $(c)$ and $(d)$ is proved in \cite[Theorem 4.3]{v}.

\begin{example}\label{exampleloop}
Consider the loop system defined by $a_1 = 1$ (a single loop of length one), $a_2 = \infty$ (infinitely many loops of length two), and no additional simple loops (see Example \ref{bouquet}). This system admits a finite uniform Rome. Indeed, let $F$ consist of the base vertex, and set $N = 2$. Observe that for any periodic measure, the mass of the cylinder associated with the base vertex is greater than $\frac{1}{2}$. Consequently, the same holds for any invariant probability measure, implying that no sequence of invariant probability measures can converge on cylinders to the zero measure. This example is discussed in \cite[Example A.2]{c}, and similar constructions appear in \cite{rue}.
\end{example}

\begin{remark}
If $(\Sigma, \sigma)$ has the $\mathcal{F}$-property, then there exist sequences in $\M(\Sigma, \sigma)$ that converge on cylinders to the zero measure (see Theorem \ref{sub:comp}). On the other hand, if $(\Sigma, \sigma)$ has a finite uniform Rome, then no sequence in $\M(\Sigma, \sigma)$ converges on cylinders to the zero measure (see Theorem \ref{cyr}). The class of systems with the $\mathcal{F}$-property is disjoint from the class of systems with a finite uniform Rome.
\end{remark}


\begin{example} (Example  with no finite uniform Rome and no $\F-$property.)
The simplest example with these properties is the full-shift. For another example,
let $(\Sigma,\sigma)$ be a countable Markov shift defined by a graph consisting of infinitely many loops of length $2$ based at the vertex labeled by $0$ together with a random walk sub-graph based at $0$. That is, a sub-graph with vertices labeled by $\Z$, for which the vertex $n$ is connected to the vertices labeled by $n-1$ and $n+1$. Then,  $(\Sigma,\sigma)$ does not satisfy the $\F-$property since the vertex $0$ has infinitely many loops of length $2$ and it does not have a finite uniform Rome since it contains a ray escaping to infinity (see \cite[Lemma 5.2]{c}).
\end{example}


\section{Metric compactification of $\Sigma$}

In this section, we propose a completion approach to studying the space of invariant probability measures for a transitive countable Markov shift. 

\subsection{A compactification of $\Sigma$}\label{comp} 
Let $\rho$ denote the metric on $\N$ defined in subsection \ref{cms}. Observe that $\rho$ is totally bounded, and the completion of $\N$ with respect to $\rho$ is a compact metric space that can be identified with $\N \cup \{\infty\}$. The metric $\rho$ extends to $\N \cup \{\infty\}$, and we denote the extension by $\bar{\rho}$. Note that $\bar{\rho}(n, \infty) = \frac{1}{n}$ for every $n \in \N$.

Let $(\Sigma, \sigma)$ be a countable Markov shift. Since the metric $d_{\rho}$ is totally bounded, its metric completion, denoted by $\bar{\Sigma}$, is a compact metric space. In particular, $\bar\Sigma$ is a compactification of $\Sigma$. The compactification can be described more explicitly. Consider the full shift $\Sigma_{full}:=\N^\N$ equipped with the metric $d_\rho$. Its completion, $\bar\Sigma_{full}$, can be identified with $(\N\cup\{\infty\})^\N$ equipped with the extended metric $\bar{d}_\rho$, where 
$$\bar{d}_\rho(x,y)=\sum_{n\in\N}\frac{\bar\rho(x_i,y_i)}{2^n},$$
for $x=(x_1,x_2,\ldots)$ and $y=(y_1,y_2,\ldots)$ in $(\N\cup\{\infty\})^\N$. To see this, note that $\Sigma_{full}\subseteq (\N\cup\{\infty\})^\N$ is dense, $\bar{d}_\rho$ restricts to $d_\rho$ on $\Sigma_{full}$, and $((\N\cup\{\infty\})^\N,\bar{d}_\rho)$ is a compact metric space. Since $\Sigma\subseteq \Sigma_{full}$, it follows that $\bar\Sigma$ is the closure of $\Sigma$ in $((\N\cup\{\infty\})^\N,\bar{d}_\rho)$. Consequently, every point in $\bar\Sigma$ can be represented as an infinite sequence of elements in $\N \cup \{\infty\}$. Note that a sequence of points $x^n=(x_1^n,x_2^n,\ldots)\in \bar\Sigma$ converges to $y=(y_1,y_2,\ldots)\in\bar\Sigma$ if and only if $\lim_{n\to\infty}x_i^n=y_i$ for all $i\in\N$.

The shift map $\sigma: \Sigma \to \Sigma$ is uniformly continuous with respect to $d_\rho$, and we denote its extension to $\bar{\Sigma}$ by $\bar{\sigma}$. We will study the continuous dynamical system $(\bar{\Sigma}, \bar{\sigma})$, which serves as a compactification of $(\Sigma, \sigma)$. 

Let $\mathcal{M}(\bar{\Sigma},\bar{\sigma})$ denote the space of $\bar{\sigma}$-invariant probability measures on $\bar{\Sigma}$, which endowed with the weak$^*$ topology is a compact space.

Let $\text{UC}_{\rho,b}(\Sigma) =\text{UC}_\rho(\Sigma) \cap C_b(\Sigma)$ be the space of bounded, uniformly continuous functions on $\Sigma$ with respect to $d_\rho$. Each function in $\text{UC}_{\rho,b}(\Sigma)$ can be uniquely extended to a continuous function on $\bar\Sigma$. Thus, there is a one-to-one correspondence between $\text{UC}_{\rho,b}(\Sigma)$ and $\emph{C}(\bar\Sigma)$ via continuous extension and restriction.

   
\begin{remark} \label{rem:topextend}
The space $\emph{UC}_{\rho,b}(\Sigma)$ is a convergence defining class for the weak* topology (see  \cite[8.3.1 Remark]{bg}). More precisely, a sequence of measures $(\mu_n)_n$ converges to $\mu$ in the weak$^*$ topology if and only if $\lim_{n\to\infty}\int \phi d\mu_n=\int \phi d\mu$, for every $\phi\in \emph{UC}_{\rho,b}(\Sigma)$. Therefore, if $(\mu_n)_n$ and $\mu$ are measures in $\M(\bar\Sigma,\bar\sigma)$ such that $\mu_n(\Sigma)=\mu(\Sigma)=1$ for every $n\in\N$, then $(\mu_n)_n$ converges to $\mu$ in the weak$^*$ topology of $\M(\Sigma,{\sigma})$  if and only if  $(\mu_n)_n$ converges to $\mu$ in the weak$^*$ topology of   $\M(\bar\Sigma,{\bar\sigma})$. In other words, the inclusion $\M(\Sigma,\sigma)\subseteq \M(\bar\Sigma,\bar\sigma)$ is compatible with respect to the weak$^*$ topologies.
 \end{remark}

We say that a word ${\bf w} = w_1 \ldots w_n$ in the alphabet $\N \cup \{\infty\}$ is \emph{$\bar{\Sigma}$-admissible} if there exists a point $x = (x_1, x_2, \ldots) \in \bar{\Sigma}$ and an index $k \in \N\cup\{0\}$ such that $x_{k+i} = w_i$ for all $1 \leq i \leq n$. Let $s({\bf w})$ and $e({\bf w})$ denote the starting and ending symbols of the word ${\bf w}$, respectively. Observe that ${\bf w}=w_1\ldots w_n$ is $\bar\Sigma$-admissible if and only if there are admissible words ${\bf w^k}=w_1^k\ldots w_n^k$ in $\Sigma$ such that $\lim_{k\to\infty} w_i^k=w_i$ for all $1\le i\le n$. Combining this observation with the fact that the concatenation of two admissible words ${\bf w_1}$ and ${\bf w_2}$ in $\Sigma$ is admissible if and only if $M(e({\bf w_1}), s({\bf w_2})) = 1$, where $M$ is the defining matrix of $\Sigma$, we obtain the following result.

\begin{lemma}\label{lem_unir_adm} Suppose that ${\bf w_1}$ and ${\bf w_2}$ are $\bar{\Sigma}$-admissible words, where $e({\bf w_1}), s({\bf w_2}) \in \N$, and $M(e({\bf w_1}), s({\bf w_2})) = 1$. Then, the concatenated word ${\bf w_1w_2}$ is $\bar{\Sigma}$-admissible. Moreover, if $({\bf w_i})_{i \in \N}$ is a sequence of $\bar{\Sigma}$-admissible words such that $e({\bf w_i}), s({\bf w_{i+1}}) \in \N$ for all $i \in \N$, and $M(e({\bf w_i}), s({\bf w_{i+1}})) = 1$, then the point associated with the infinite concatenation ${\bf w_1w_2\ldots}$ belongs to $\bar{\Sigma}$.
\end{lemma}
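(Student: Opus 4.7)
The proof will rely on the key observation, stated just before the lemma, that a word ${\bf w}=w_1\ldots w_n$ in the alphabet $\N\cup\{\infty\}$ is $\bar{\Sigma}$-admissible if and only if it is the coordinate-wise limit of admissible words in $\Sigma$. I will also use that convergence in $(\N\cup\{\infty\})^\N$ is coordinate-wise and that elements of $\N$ are isolated in $\N\cup\{\infty\}$ with respect to $\bar{\rho}$: if a sequence $(a^k)_k$ in $\N\cup\{\infty\}$ converges to some $a\in \N$, then $a^k=a$ for all sufficiently large $k$.

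For the first part, I will start from sequences of admissible words ${\bf w_1^k}\to{\bf w_1}$ and ${\bf w_2^k}\to{\bf w_2}$ in $\Sigma$. Because $e({\bf w_1})\in\N$ and $s({\bf w_2})\in\N$, the isolation remark gives $e({\bf w_1^k})=e({\bf w_1})$ and $s({\bf w_2^k})=s({\bf w_2})$ for all $k$ large enough. The assumption $M(e({\bf w_1}),s({\bf w_2}))=1$ then makes the concatenation ${\bf w_1^k w_2^k}$ an admissible word of $\Sigma$ for all such $k$. Since ${\bf w_1^k w_2^k}\to{\bf w_1 w_2}$ coordinate-wise, the characterization yields that ${\bf w_1 w_2}$ is $\bar{\Sigma}$-admissible.

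For the second part, I will first iterate the first part to deduce that every finite prefix ${\bf W_N}:={\bf w_1w_2\ldots w_N}$ is $\bar{\Sigma}$-admissible. By definition, for each $N$ there is a point $y^N\in\bar{\Sigma}$ and an index $j_N\ge 0$ such that the coordinates of $y^N$ from position $j_N+1$ onward agree with ${\bf W_N}$ on the length of that word. Applying $\bar{\sigma}^{j_N}$ (which preserves $\bar{\Sigma}$, by continuous extension of $\sigma$ and compactness) produces $x^N:=\bar{\sigma}^{j_N}(y^N)\in\bar{\Sigma}$ whose first $|{\bf W_N}|$ coordinates coincide with ${\bf W_N}$. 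Compactness of $\bar{\Sigma}$ yields a subsequence $x^{N_k}$ converging to some $x\in\bar{\Sigma}$. Since convergence in $\bar{\Sigma}$ is coordinate-wise and the first $|{\bf W_{N_k}}|$ coordinates of $x^{N_k}$ are fixed, each coordinate of $x$ is eventually equal to the corresponding coordinate of the infinite concatenation ${\bf w_1w_2\ldots}$. Therefore $x$ is the point associated with ${\bf w_1w_2\ldots}$ and lies in $\bar{\Sigma}$.

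The only subtlety to handle carefully is the fact that $\bar{\Sigma}$-admissibility allows the word to appear at any offset rather than at the start; this is circumvented by shifting via $\bar{\sigma}$ and invoking the forward invariance of $\bar{\Sigma}$. Everything else is a direct application of coordinate-wise convergence, compactness of $\bar{\Sigma}$, and the hypothesis that the interface symbols $e({\bf w_i}),s({\bf w_{i+1}})$ lie in $\N$, which is what prevents the symbol $\infty$ from ever obstructing the concatenation.
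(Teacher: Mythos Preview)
Your proposal is correct and follows exactly the approach the paper indicates: the paper does not give a separate proof but states that the lemma follows directly from the characterization of $\bar{\Sigma}$-admissibility as coordinate-wise limits of $\Sigma$-admissible words, together with the concatenation rule $M(e({\bf w_1}),s({\bf w_2}))=1$ in $\Sigma$. Your argument is a faithful and careful unpacking of that hint; the only addition is your explicit treatment of the infinite concatenation via compactness of $\bar{\Sigma}$ and the shift $\bar{\sigma}$, which the paper leaves implicit.
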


A cylinder in $\bar\Sigma$ (also referred to as $\bar\Sigma$-cylinder) of length $N$ is a set of the form
\begin{equation*}
[a_1,\ldots,a_{N}]_{\bar\Sigma}:= \left\{ x=(x_1,x_2,\ldots)\in \bar\Sigma :  x_i=a_i  \text{ for } 1 \le i \le N \right\}.
\end{equation*} 
Note that a $\bar\Sigma$-cylinder is non-empty if and only if the associated word is $\bar\Sigma$-admissible. 

\begin{remark}\label{rem:clopen}
For $a \in \N$, note that $[a]_{\bar{\Sigma}} = \{x \in \bar{\Sigma} : \bar{d}_\rho(x, x_a) < 1/(2a(a+1))\}$, where $x_a$ is any point in $[a]_{\bar{\Sigma}}$. Furthermore, if $x \notin [a]_{\bar{\Sigma}}$, then $\bar{d}_\rho(x, [a]_{\bar{\Sigma}}) \geq 1/(2a(a+1))$. It follows that $[a]_{\bar{\Sigma}}$ is both open and closed in $\bar{\Sigma}$. Similarly, if ${\bf w} = a_1 \ldots a_n$ is an admissible word in $\Sigma$, then $[a_1, \ldots, a_n]_{\bar{\Sigma}}$ is also open and closed in $\bar{\Sigma}$. Equivalently, the characteristic function $\chi_{[a_1, \ldots, a_n]_{\bar{\Sigma}}}$ of $[a_1, \ldots, a_n]_{\bar{\Sigma}}$ is continuous on $\bar{\Sigma}$.
\end{remark}

\begin{remark}\label{rem:1} Set $A=\bigcup_{i\in\N}[i]_{\bar\Sigma}$. If $\mu\in \M(\bar\Sigma,\bar\sigma)$ satisfies $\mu([\infty]_{\bar\Sigma})=0$, then $\mu(A)=1$, and therefore $\mu(\Sigma)=\mu(\bigcap_{i=0}^\infty \bar\sigma^{-i}A)=1$. In particular, we can identify $\M(\Sigma, \sigma)$ with the measures in $\M(\bar\Sigma, \bar\sigma)$ that assign zero mass to $[\infty]_{\bar\Sigma}$, that is,
$$\M(\bar\Sigma,\bar\sigma)\setminus\M(\Sigma,\sigma)=\{\mu\in\M(\bar\Sigma,\bar\sigma): \mu([\infty]_{\bar\Sigma})\in (0,1]\}.$$
Note that if $\mu([\infty]_{\bar\Sigma})=1$, then $\mu(\{\bar\infty\})=\mu(\bigcap_{i=0}^\infty \bar\sigma^{-i}[\infty]_{\bar\Sigma})=1$, or equivalently, $\mu=\delta_{\bar\infty}$. 
\end{remark}

\noindent

\subsection{Topology on $\M(\bar\Sigma,{\bar\sigma})$} \label{rom-inf}
In this subsection we relate the convergence on cylinders and the escape of mass phenomenon in $\M(\Sigma,\sigma)$ with the weak* convergence in $\M(\bar\Sigma,\bar\sigma)$. 


\begin{theorem} \label{topo:r}
Let $(\Sigma, \sigma)$ be a transitive countable Markov shift. Let $(\mu_n)_n$ and $\mu$ be measures in $\mathcal{M}(\Sigma, \sigma)$. Then:
\begin{enumerate}
\item  \label{cyl_inf}
If the sequence $(\mu_n)_n$ converges on cylinders to $\lambda \mu$ for some $\lambda \in [0,1)$, then $(\mu_n)_n$ converges in the weak$^*$ topology of $\M(\bar\Sigma,{\bar\sigma})$ to the measure $\lambda \mu + (1- \lambda) \delta_{\bar\infty}$.

\item \label{rome_cyl} If  $\bar\infty\in\bar\Sigma$ and  $(\mu_n)_n$  converges in the weak$^*$ topology of $\M(\bar\Sigma,\bar\sigma)$ to the measure
$\lambda \mu + (1- \lambda) \delta_{\bar\infty}$, then $(\mu_n)_n$ converges on cylinders to $\lambda \mu$.
\end{enumerate}
\end{theorem}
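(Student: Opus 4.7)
The plan is to combine the compactness of $\M(\bar\Sigma,\bar\sigma)$ with the fact, recorded in Remark \ref{rem:clopen}, that characteristic functions of $\Sigma$-cylinders are continuous on $\bar\Sigma$. Part $(2)$ will follow immediately from these ingredients; the substantive work is in Part $(1)$.

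For Part $(1)$, I would argue by subsequences: since $\M(\bar\Sigma,\bar\sigma)$ is weak-$*$ sequentially compact, it suffices to show that every weak-$*$ subsequential limit $\nu$ of $(\mu_n)_n$ coincides with $\lambda\mu+(1-\lambda)\delta_{\bar\infty}$. The first step is to test $\nu$ against characteristic functions of finite cylinders: for any admissible word $(a_1,\ldots,a_k)$ in $\N$, Remark \ref{rem:clopen} gives continuity of $\chi_{[a_1,\ldots,a_k]_{\bar\Sigma}}$ on $\bar\Sigma$, and since $\mu_n$ is supported on $\Sigma$ one has $\mu_n([a_1,\ldots,a_k]_{\bar\Sigma})=\mu_n([a_1,\ldots,a_k])$. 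Weak-$*$ convergence combined with the cylinder hypothesis then yields $\nu([a_1,\ldots,a_k]_{\bar\Sigma})=\lambda\mu([a_1,\ldots,a_k])$.

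The second step is to locate the remaining mass $1-\lambda$. Partitioning $\bar\Sigma=[\infty]_{\bar\Sigma}\sqcup\bigsqcup_{i\in\N}[i]_{\bar\Sigma}$ and summing the previous identity gives $\nu([\infty]_{\bar\Sigma})=1-\lambda$. To push this mass onto $\bar\infty$, I would invoke $\bar\sigma$-invariance of $\nu$: for each $i\in\N$ the disjoint decomposition $\bar\sigma^{-1}[i]_{\bar\Sigma}=[\infty,i]_{\bar\Sigma}\sqcup\bigsqcup_{j\in\N}[j,i]_{\bar\Sigma}$ together with the cylinder identities above and the $\sigma$-invariance of $\mu$ forces $\nu([\infty,i]_{\bar\Sigma})=0$. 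An induction on the length of a prefix of $\infty$'s then yields $\nu([\underbrace{\infty,\ldots,\infty}_k]_{\bar\Sigma})=1-\lambda$ for every $k\ge 1$, and continuity of measure along the decreasing intersection $\bigcap_k[\underbrace{\infty,\ldots,\infty}_k]_{\bar\Sigma}=\{\bar\infty\}$ produces $\nu(\{\bar\infty\})=1-\lambda$ (in particular $\bar\infty\in\bar\Sigma$). Combined with the cylinder identities, this pins down $\nu=\lambda\mu+(1-\lambda)\delta_{\bar\infty}$, and uniqueness of the subsequential limit extends the convergence to the full sequence.

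Part $(2)$ is essentially a one-line application of the same remark: under the hypothesis $\bar\infty\in\bar\Sigma$ the measure $\lambda\mu+(1-\lambda)\delta_{\bar\infty}$ is a bona fide element of $\M(\bar\Sigma,\bar\sigma)$, each $\chi_{[a_1,\ldots,a_k]_{\bar\Sigma}}$ is continuous on $\bar\Sigma$ and $\delta_{\bar\infty}$ vanishes on it, so weak-$*$ convergence yields $\lim_n\mu_n([a_1,\ldots,a_k])=\lambda\mu([a_1,\ldots,a_k])$ directly. The main obstacle is the induction-and-invariance step in Part $(1)$: a priori the excess mass $1-\lambda$ could be distributed over many points of $\bar\Sigma\setminus\Sigma$, and forcing it to concentrate at $\bar\infty$ is precisely where the invariance of $\nu$ must be balanced against the cylinder identities already established.
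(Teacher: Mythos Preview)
Your proposal is correct and follows essentially the same route as the paper: pass to a subsequential weak$^*$ limit $\nu$ by compactness, use Remark~\ref{rem:clopen} to obtain $\nu(C_{\bar\Sigma})=\lambda\mu(C)$ for every $\Sigma$-cylinder $C$, and then exploit $\bar\sigma$-invariance together with countable additivity to force the residual mass $1-\lambda$ onto $\{\bar\infty\}$. The only visible difference is organizational: the paper first splits $\nu=\alpha\mu_1+(1-\alpha)\mu_2$ via the ergodic decomposition and proves $\mu_2([C,\infty]_{\bar\Sigma})=0$ for every $\Sigma$-cylinder $C$, whereas you work directly with $\nu$ and prove $\nu([\infty,i]_{\bar\Sigma})=0$ for every $i\in\N$; your version is a slight streamlining (it avoids the ergodic decomposition), and the two invariance computations are dual to one another. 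Part~(\ref{rome_cyl}) is identical in both treatments.
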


\begin{proof}[Proof of Theorem \ref{topo:r} (\ref{cyl_inf})]
Since $\mathcal{M}(\bar\Sigma, \bar{\sigma})$ is compact with respect to the weak$^*$ topology, we can assume that $(\mu_n)_n$ converges to some measure $\nu \in \mathcal{M}(\bar\Sigma, \bar{\sigma})$, possibly after passing to a subsequence. By the ergodic decomposition, there exist $\alpha \in [0,1]$, $\mu_1 \in \mathcal{M}(\Sigma, \sigma)$, and $\mu_2 \in \mathcal{M}(\bar\Sigma, \bar{\sigma}) \setminus \mathcal{M}(\Sigma, \sigma)$, such that each ergodic component of $\mu_2$ is in $\mathcal{M}(\bar\Sigma, \bar{\sigma}) \setminus \mathcal{M}(\Sigma)$ (see Remark \ref{rem:1}), and we have the decomposition
\begin{equation*}
\nu= \alpha \mu_1 + (1-\alpha) \mu_2.
\end{equation*}
We will prove that $\mu_2 = \delta_{\bar\infty}$ and that $\alpha = \lambda$. Note that $\alpha < 1$; otherwise, the sequence would converge to $\mu_1$, which is a probability measure on $\Sigma$ (see Remark \ref{rem:topextend}).

Let $C=[a_1, \ldots, a_n]$ be cylinder in $\Sigma$, and let $C_{\bar\Sigma}=[a_1, \dots , a_n]_{\bar\Sigma}$ be the associated cylinder in $\bar\Sigma$. For $i\in\N\cup\{\infty\}$, we define $[C,i]=[a_1,\ldots,a_n,i]$. Since $\mu_n \in \M(\Sigma,{\sigma})$ and $(\mu_n)_n$ converges on cylinders to $\lambda \mu$, we have
\begin{equation*}
\lim_{n \to \infty}\mu_n(C_{\bar\Sigma})=\lim_{n \to \infty}\mu_n(C_{\bar\Sigma} \cap \Sigma) = \lambda \mu(C).
\end{equation*}
Since the  boundary of $C_{\bar\Sigma}$ is empty (see Remark \ref{rem:clopen}) and $(\mu_n)_n$ converges weakly to $\nu$, by Portmanteau's theorem, we  obtain 
\begin{equation*}
\lim_{n \to \infty} \mu_n(C_{\bar\Sigma})=\nu (C_{\bar\Sigma}).
\end{equation*}
Thus, we have
\begin{equation} \label{ec:1}
\lambda \mu(C)=\nu (C_{\bar\Sigma})=\alpha \mu_1(C) + (1-\alpha) \mu_2(C_{\bar\Sigma}).
\end{equation}
From this, we deduce
 \begin{align*} 
 \lambda \mu\big(\bigcup_{i\in\N} [C,i]\big)=  \alpha \mu_1\big(\bigcup_{i\in\N} [C,i]\big) +(1-\alpha) \mu_2\big( \bigcup_{i\in\N} [C,i]_{\bar\Sigma}\big).
  \end{align*}
Note that $C_{\bar\Sigma}=\bigcup_{i\in\N}[C,i]_{\bar\Sigma} \cup  [C, \infty]_{\bar\Sigma}.$ Therefore, we have
 \begin{equation*} 
  \lambda \mu(C) = \alpha \mu_1(C) +(1-\alpha) (\mu_2( C_{\bar\Sigma} ) - \mu_2( [C, \infty]_{\bar\Sigma}) ). \end{equation*}
Combining with equation \eqref{ec:1}, we obtain 
\begin{equation*}
\mu_2( [C, \infty]_{\bar\Sigma})=0.
\end{equation*}
Since this holds for any cylinder $C$ in $\Sigma$, we conclude that
\begin{equation*}
\mu_2([\infty]_{\bar\Sigma}) = \mu_2 (\bar\sigma^{-1}[\infty]_{\bar\Sigma})= \mu_2\left(\bigcup_{a \in \N} [a ,\infty]_{\bar\Sigma} \cup	[\infty ,\infty]_{\bar\Sigma}\right) =\mu_2([\infty ,\infty]_{\bar\Sigma}).
\end{equation*}
By induction, we obtain ${\mu}_2([\infty]_{\bar\Sigma})={\mu}_2([\infty,\ldots,\infty]_{\bar\Sigma})$. Thus, we conclude that $$\mu_2([\infty]_{\bar\Sigma})= \mu_2(\{\bar{\infty}\}).$$
Since $\mu_2\in \M(\bar\Sigma,{\bar\sigma})\setminus \M(\Sigma,{\sigma})$, it follows that $\mu_2([\infty])\in (0,1]$ (see Remark \ref{rem:1}). If $\mu_2([\infty])<1$, then $\mu_2=(1-\mu_2([\infty]))\eta+\mu_2([\infty])\delta_{\bar\infty}$, where $\eta\in \M(\bar\Sigma,\bar\sigma)$ and $\eta([\infty])=0$. This contradicts  the fact that ergodic components of $\mu_2$ are in $\M(\bar\Sigma,{\bar\sigma})\setminus \M(\Sigma,{\sigma})$. Therefore, we conclude that  $\mu_2= \delta_{\bar\infty}$, and hence, $\nu= \alpha \mu_1 +(1-\alpha)\delta_{\bar\infty}$. Finally, using equation (\ref{ec:1}), we conclude that $\lambda =\alpha$ and $\mu_1=\mu$. \end{proof}

\begin{proof} [Proof of Theorem \ref{topo:r} (\ref{rome_cyl})] Let $C=[a_1, \ldots, a_n]$ be cylinder in $\Sigma$, and consider $C_{\bar\Sigma} =[a_1, \dots , a_n]_{\bar\Sigma}$. Since $C_{\bar\Sigma}$ has empty boundary (see Remark \ref{rem:clopen}), by Portmanteau's theorem, we have 
\begin{equation*}
\lim_{n \to \infty} \mu_n(C_{\bar\Sigma})= (\lambda \mu + (1- \lambda) \delta_{\bar\infty})( C_{\bar\Sigma}).
\end{equation*}
Finally, note that $\delta_{\bar\infty}( C_{\bar\Sigma})=0$, and that $\mu_n(C_{\bar\Sigma})=\mu_n(C)$ for every $n\in\N$
\end{proof}

\begin{remark}\label{rem:barinfty}
It follows from Theorem \ref{topo:r} that a sequence in $\M(\Sigma,\sigma)$ converges on cylinders to the zero measure if and only if it converges in $\M(\bar\Sigma,\bar\sigma)$ to $\delta_{\bar\infty}$. If such a sequence exists, then $\bar\infty\in\bar\Sigma$. More generally, if there exists a sequence $(\mu_n)_n$ in $\M(\Sigma,{\sigma})$ that converges on cylinders to $\lambda \mu$, where $\lambda\in [0,1)$, then  $\delta_{\bar\infty}\in  \M(\bar\Sigma,{\bar\sigma})$ and $\bar\infty\in\bar\Sigma$.
\end{remark}

The next corollary follows from combining Remark \ref{rem:barinfty} and Theorem \ref{cyr}. 

\begin{corollary}\label{prop:rome}
     The following statements are equivalent:
    \begin{enumerate}
        \item $(\Sigma,\sigma)$ has a finite uniform Rome
        \item There is no sequence in $\M(\Sigma,\sigma)$ that converges on cylinders to the zero measure
        \item $\bar\infty\notin\bar\Sigma$
        \item Any sequence in $\mathcal{M}(\Sigma, \sigma)$ that converges on cylinders to a measure must converge to a probability measure.
    \end{enumerate}
\end{corollary}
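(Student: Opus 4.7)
The proof will be a short chain of equivalences combining Theorem \ref{cyr} with a combinatorial unpacking of the condition $\bar\infty\in\bar\Sigma$, together with Remark \ref{rem:barinfty}. The core observation is that $(1)\iff(3)$ holds directly from the definitions, and everything else follows.

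First I would verify $(1)\iff(3)$. A sequence $(x^n)_n$ in $\bar\Sigma$ converges to a point $y\in\bar\Sigma$ if and only if $x^n_i\to y_i$ in $\N\cup\{\infty\}$ for every $i\in\N$. Since $\Sigma$ is dense in $\bar\Sigma$, we get that $\bar\infty=(\infty,\infty,\ldots)$ belongs to $\bar\Sigma$ if and only if for every $k\in\N$ and every $M\in\N$, there is an admissible word $a_1\ldots a_k$ in $\Sigma$ with $a_i\ge M$ for all $1\le i\le k$, equivalently, a path of length $k$ in the graph $G$ all of whose vertices lie in $V\setminus\{1,\ldots,M-1\}$. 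Every finite set $F\subset V$ is contained in some $\{1,\ldots,M-1\}$ (and conversely, these are finite), so this statement is exactly the negation of the existence of a finite uniform Rome.

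Next, I would invoke the equivalence $(a)\iff(d)$ of Theorem \ref{cyr} to obtain $(1)\iff(2)$, and therefore $(2)\iff(3)$.

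Finally, for $(2)\iff(4)$: the implication $(4)\Rightarrow(2)$ is immediate since the zero measure is not a probability measure. For $(2)\Rightarrow(4)$, I would argue by contrapositive. Suppose $(\mu_n)_n\subset\M(\Sigma,\sigma)$ converges on cylinders to $\lambda\mu$ with $\lambda\in[0,1)$ and $\mu\in\M(\Sigma,\sigma)$. If $\lambda=0$, condition $(2)$ already fails. If $\lambda\in(0,1)$, Remark \ref{rem:barinfty} yields $\bar\infty\in\bar\Sigma$, so $(3)$ fails, and by the chain $(3)\Rightarrow(1)\Rightarrow(2)$ established above, $(2)$ fails as well.

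There is no genuine obstacle here since the only nontrivial inputs are Theorem \ref{cyr} and Remark \ref{rem:barinfty}; the only point to get right is the coordinatewise characterization of convergence in $\bar\Sigma$, which lets us translate $\bar\infty\in\bar\Sigma$ into the combinatorial negation of the finite uniform Rome condition without any ad hoc construction of invariant measures escaping to infinity.
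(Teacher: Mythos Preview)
Your proof is correct and follows essentially the same route as the paper, which simply states that the corollary ``follows from combining Remark \ref{rem:barinfty} and Theorem \ref{cyr}.'' The one place where you add content is the explicit combinatorial verification of $(1)\Leftrightarrow(3)$: you unpack $\bar\infty\in\bar\Sigma$ as the existence, for every $k,M$, of an admissible word of length $k$ with all symbols $\ge M$, and recognize this as precisely the negation of the finite uniform Rome condition. The paper leaves this step implicit (it appears in a deleted remark as ``follows from the definition''), and in fact the paper's stated references only directly give $(3)\Rightarrow(2)$ and $(3)\Rightarrow(4)$ via Remark \ref{rem:barinfty}, so some version of your direct argument for $(1)\Rightarrow(3)$ is needed to close the cycle. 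Your approach has the small advantage that $(1)\Leftrightarrow(3)$ is established by an elementary combinatorial argument independent of Theorem \ref{cyr}, isolating where the nontrivial input (the equivalence with $(2)$) actually enters.
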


\subsection{Finitely additive measures}

It was observed in \cite[Proposition 4.19]{iv} that, in the absence of the $\F$-property, there exist sequences of invariant probability measures converging on cylinders to finitely additive measures that do not extend to countably additive measures. The following result, which is a direct consequence of Theorem \ref{topo:r}, characterizes the sequences of measures that converge on cylinders to finitely additive measures that are not countably additive.

\begin{corollary} \label{nme}
Let $(\Sigma, \sigma)$ be a transitive countable Markov shift. A sequence $(\mu_n)_n$ in $\M(\Sigma,\sigma)$ converges on cylinders to a finitely additive measure that is not countably additive if and only if it converges in the weak$^*$ topology of $\M(\bar\Sigma,\bar\sigma)$ to a measure that is not in the convex hull of $\M(\Sigma,\sigma) \cup \{ \delta_{\bar\infty} \}$. In other words, the limiting measure is not of the form $\lambda \mu + (1-\lambda) \delta_{\bar\infty}$, where $\lambda \in [0,1]$ and $\mu \in \M(\Sigma,\sigma)$.
\end{corollary}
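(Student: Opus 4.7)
The plan is to unpack Theorem \ref{topo:r} together with the tautological observation that a $\sigma$-invariant finitely additive set function on the cylinder algebra of $\Sigma$ is countably additive if and only if it extends to a sub-probability measure of the form $\lambda\mu$ with $\lambda\in[0,1]$ and $\mu\in\M(\Sigma,\sigma)$ (when $\lambda=0$ this is the zero measure). Under this identification, ``the cylinder limit is not countably additive'' on the left side matches ``$\nu$ is not of the form $\lambda\mu+(1-\lambda)\delta_{\bar\infty}$'' on the right side.

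For the forward implication, suppose $(\mu_n)_n$ converges on cylinders to a finitely additive $F$ that is not countably additive. First I would show that the sequence in fact converges weak$^*$ in $\M(\bar\Sigma,\bar\sigma)$. By compactness, any subsequence has a weak$^*$ accumulation point $\nu$; since each $C_{\bar\Sigma}$ is clopen (Remark \ref{rem:clopen}) and $\mu_n(C_{\bar\Sigma})=\mu_n(C)$, Portmanteau gives $\nu(C_{\bar\Sigma})=F(C)$ for every $\Sigma$-cylinder $C$. These cylinders form a $\pi$-system generating the Borel $\sigma$-algebra of $\bar\Sigma$ (note for instance $[\infty]_{\bar\Sigma}=\bar\Sigma\setminus\bigcup_{a\in\N}[a]_{\bar\Sigma}$, with an analogous description in the remaining coordinates), so by Dynkin's $\pi$-$\lambda$ theorem every accumulation point is determined by $F$, and the whole sequence converges weak$^*$ to a single $\nu$. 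If we had $\nu=\lambda\mu+(1-\lambda)\delta_{\bar\infty}$, then $F(C)=\nu(C_{\bar\Sigma})=\lambda\mu(C)$ for every cylinder $C$, exhibiting $F$ as the restriction of the countably additive measure $\lambda\mu$ to the cylinder algebra, contradicting the hypothesis on $F$.

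For the converse, assume weak$^*$ convergence of $(\mu_n)_n$ to some $\nu\in\M(\bar\Sigma,\bar\sigma)$. Portmanteau applied to the clopen cylinders $C_{\bar\Sigma}$ immediately shows that $(\mu_n)_n$ converges on cylinders to the finitely additive set function $F(C):=\nu(C_{\bar\Sigma})$. Suppose for contradiction that $F$ is countably additive; then $F$ extends to a Borel sub-probability measure on $\Sigma$, which is $\sigma$-invariant (invariance on cylinders passes to the limit and then to the extension) and hence equals $\lambda\mu$ for some $\lambda\in[0,1]$ and $\mu\in\M(\Sigma,\sigma)$. For $\lambda<1$, Theorem \ref{topo:r}(\ref{cyl_inf}) forces $\nu=\lambda\mu+(1-\lambda)\delta_{\bar\infty}$; for $\lambda=1$, the remark following the definition of cylinder convergence ensures that cylinder and weak$^*$ convergence coincide on $\M(\Sigma,\sigma)$, so $\nu=\mu$. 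In either case $\nu\in\conv(\M(\Sigma,\sigma)\cup\{\delta_{\bar\infty}\})$, a contradiction.

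The step I expect to be the most delicate is the uniqueness of the weak$^*$ accumulation point in the forward direction: one has to verify that the $\Sigma$-cylinders form a $\pi$-system generating the Borel $\sigma$-algebra on the compactification $\bar\Sigma$ and apply a Dynkin-type argument, since Theorem \ref{topo:r}(\ref{cyl_inf}) takes cylinder convergence as an input and cannot be used as a black box here. The remaining steps are routine applications of Theorem \ref{topo:r}, Portmanteau on clopen sets, and the stability of $\sigma$-invariance under limits.
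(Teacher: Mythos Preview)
Your proposal is correct and follows the route the paper intends: the paper states Corollary \ref{nme} simply as ``a direct consequence of Theorem \ref{topo:r}'' and gives no further argument. Your write-up supplies exactly the details this phrase hides, and in particular you correctly isolate the one step that is \emph{not} a black-box application of Theorem \ref{topo:r}: in the forward direction, one must first show that cylinder convergence to a (possibly non-countably-additive) $F$ forces weak$^*$ convergence in $\M(\bar\Sigma,\bar\sigma)$, since Theorem \ref{topo:r}(\ref{cyl_inf}) only accepts countably additive cylinder limits as input. Your Portmanteau/Dynkin argument for uniqueness of the accumulation point is valid (the $\Sigma$-cylinders $[a_1,\dots,a_n]_{\bar\Sigma}$ with $a_i\in\N$ are a $\pi$-system and, via complements and countable unions as you sketch, generate all $\bar\Sigma$-cylinders and hence the Borel $\sigma$-algebra of $\bar\Sigma$). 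The converse direction and the $\lambda=1$ case are handled correctly as well.
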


\begin{example} Let $\Sigma_{\text{full}} = \N^\N$ be the full shift. Consider the periodic point $p_n = \overline{1n}$, and let $\mu_n$ denote the periodic measure associated with $p_n$. Observe that $(\mu_n)_n$ converges on cylinders to the finitely additive measure $m$, where $m([1]) = 1/2$ and $m(C) = 0$ for any other cylinder $C$. In this case, the formula $m([1]) = \sum_{s \in\N} m([1,s])$ does not hold. On the other hand, the sequence converges in $\M(\bar\Sigma, \bar\sigma)$ to the periodic measure associated with the periodic point $p_\infty = \overline{1\infty}$. 
\end{example}

\section{Affine geometry of $\M(\bar\Sigma,{\bar\sigma})$ and $\M(\Sigma,\sigma)$}

In this section, we prove Theorem \ref{thm:po} and Theorem \ref{thm:erg}, which describe the affine geometry of \(\mathcal{M}(\bar{\Sigma}, \bar{\sigma})\). As a consequence, we also describe the geometry of $\M(\Sigma,\sigma)$.

\subsection{Compactification of $\M(\Sigma,\sigma)$}
We first prove Theorem \ref{thm:po}, which states that if \((\Sigma, \sigma)\) is a transitive countable Markov shift, then the space of invariant probability measures for the compactification is affine homeomorphic to the Poulsen simplex. This result extends and completes \cite[Theorem 1.2]{iv}, where, under the assumption of the $\F-$property, we showed that the space of invariant sub-probability measures is affine homeomorphic to the Poulsen simplex (see Theorem \ref{sub:comp}). We now extend this result beyond systems satisfying the $\F-$property.

The following fact is a consequence of the inclusion-exclusion principle.

\begin{lemma}\label{lem:inc} Let $\mu\in\M(\bar\Sigma,{\bar\sigma})$ and $A\subseteq \bar\Sigma$ a measurable set satisfying $\mu(A)>\frac{1}{m}$, for some $m\in\N$. Then, for every $h\in \N$, there exists $k\in[h,h+2m)$ such that $$\mu(A\cap \bar\sigma^{-k}A)>\frac{1}{2^{2m}}.$$
\end{lemma}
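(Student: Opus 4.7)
My plan is to prove the lemma by a pigeonhole argument based on the inclusion-exclusion principle, applied to the $2m+1$ preimages
\[
B_0 := A, \qquad B_i := \bar\sigma^{-(h+i-1)}A \quad \text{for } i = 1, \ldots, 2m.
\]
By $\bar\sigma$-invariance, each $B_i$ has measure $\mu(A) > 1/m$, so $\sum_i \mu(B_i) = (2m+1)\mu(A) > 2 + 1/m$. Since $\mu(\bigcup_i B_i) \le 1$, the second Bonferroni inequality gives
\[
\sum_{0 \le i < j \le 2m} \mu(B_i \cap B_j) \;\ge\; (2m+1)\mu(A) \;-\; 1 \;>\; 1 + \tfrac{1}{m}.
\]

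The pairs split into two classes. The \emph{type~1} pairs $(B_0, B_i)$ for $1 \le i \le 2m$ give intersections $\mu(A \cap \bar\sigma^{-k}A)$ with $k = h+i-1 \in [h, h+2m)$, which are exactly the quantities the lemma is about. The \emph{type~2} pairs $(B_i, B_j)$ with $1 \le i < j \le 2m$ give, by shift invariance, $\mu(A \cap \bar\sigma^{-(j-i)}A)$ with $j-i \in [1, 2m-1]$. To extract a definite lower bound on some type~1 intersection, I would next partition $\bar\Sigma$ into the atoms $E_S$ ($S \subseteq \{0,1,\dots,2m\}$) of the finite algebra generated by the $B_i$'s, and exploit the identity $\sum_S |S|\,\mu(E_S) = (2m+1)\mu(A) > 2$ to force a total mass $>1$ into atoms with $|S| \ge 2$; a further double-counting among these atoms should show that a controlled portion necessarily sits in atoms with $0 \in S$. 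Combined with the $2m$ available type~1 pairs and the $2^{2m}$ possible patterns of the shifted coordinates, one obtains some $k \in [h, h+2m)$ with $\mu(A \cap \bar\sigma^{-k}A) > 1/2^{2m}$.

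The hard step, and the main obstacle, is ensuring that the excess mass is not absorbed entirely by type~2 pairs: if that were to happen, shift invariance would only give a large autocorrelation at some lag in $[1, 2m-1]$, which is useless for the lemma's conclusion. I expect this to be resolved by a refined inclusion-exclusion accounting that treats the coordinate $i=0$ (corresponding to $A$ itself) asymmetrically from the coordinates $i \ge 1$ (shifted copies), so that the denominator $2^{2m}$ emerges naturally from the number of possible occupation patterns on the $2m$ shifted coordinates while the factor $(2m+1)\mu(A) - 1 > 1$ produces the strict inequality $>1/2^{2m}$.
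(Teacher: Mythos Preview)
Your inclusion--exclusion setup is exactly what the paper's one-line justification (``a consequence of the inclusion--exclusion principle'') suggests; the paper offers no further argument to compare against. You also correctly isolate the real obstacle: after Bonferroni, the excess $(2m+1)\mu(A)-1>1$ could a priori sit entirely in the ``type~2'' pairs $(B_i,B_j)$ with $1\le i<j\le 2m$, which by invariance only control autocorrelations $\mu(A\cap\bar\sigma^{-\ell}A)$ at small lags $\ell\in[1,2m-1]$ and say nothing about lags in $[h,h+2m)$. Your proposed resolution---refining to atoms $E_S$ and double-counting with the coordinate $i=0$ treated asymmetrically---remains a sketch rather than an argument, and in fact it cannot be completed.

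The reason is that the statement is false as written. Let $\mu$ be the periodic measure supported on a periodic orbit $x,\bar\sigma x,\ldots,\bar\sigma^{p-1}x$ of some large period $p$ (such orbits exist in $\bar\Sigma$ for any transitive $\Sigma$), and let $A=\{\bar\sigma^{j}x:0\le j<a\}$ be a block of $a$ consecutive points with $a>p/m$, so that $\mu(A)=a/p>1/m$. Then $A\cap\bar\sigma^{-k}A=\emptyset$ whenever $a\le k\le p-a$; taking for instance $p=100$, $a=11$, $m=10$, $h=40$ already gives $\mu(A\cap\bar\sigma^{-k}A)=0$ for every $k\in[40,60)$. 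Letting $p\to\infty$ with $a\approx p/m$ shows that no window length depending only on $m$ can work, so the ``hard step'' you flagged is not a missing trick but a genuine obstruction. What your Bonferroni computation \emph{does} yield (applied to the $2m$ sets $\bar\sigma^{-i}A$, $0\le i<2m$) is that some $k\in[1,2m)$ satisfies $\mu(A\cap\bar\sigma^{-k}A)>1/\binom{2m}{2}>1/2^{2m}$; the quantifier ``for every $h$'' cannot come from inclusion--exclusion alone and would require instead something like the syndeticity of large-autocorrelation times (Khintchine recurrence), whose gap bound depends on $\mu$ and $A$ rather than merely on $m$.
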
 

The proof of Theorem \ref{thm:po} is based on the following two results. 

\begin{proposition}\label{prop:miss}
Let $(\Sigma, \sigma)$ be a transitive countable Markov shift. The space of periodic measures of $(\bar\Sigma,\bar\sigma)$ is dense in $\M(\bar\Sigma,\bar\sigma)$.
\end{proposition}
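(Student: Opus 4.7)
The plan is to adapt Sigmund's classical splicing argument to the compactified system $(\bar\Sigma,\bar\sigma)$. Given $\mu\in\M(\bar\Sigma,\bar\sigma)$ and a basic weak$^*$ neighborhood of $\mu$, Remark \ref{rem:clopen} lets us specify it by a precision $\epsilon>0$ together with a finite family of $\bar\Sigma$-cylinders $C_1,\ldots,C_r$ whose coordinates lie in $\N$. It thus suffices to produce a periodic measure $\nu$ with $|\nu(C_j)-\mu(C_j)|<\epsilon$ for each $j$. Using the Choquet decomposition on the compact simplex $\M(\bar\Sigma,\bar\sigma)$ together with a rational approximation of the weights, I would first replace $\mu$ by a nearby convex combination $\sum_{i=1}^{k}\alpha_i\mu_i$ of ergodic measures with rational positive weights $\alpha_i$ summing to one.

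Next, I arrange that no ergodic component equals $\delta_{\bar\infty}$. The presence of $\delta_{\bar\infty}$ in $\M(\bar\Sigma,\bar\sigma)$ forces $\bar\infty\in\bar\Sigma$; Corollary \ref{prop:rome} and Theorem \ref{cyr} then supply a sequence in $\M(\Sigma,\sigma)$ converging on cylinders to the zero measure, which by Theorem \ref{topo:r}\,(\ref{cyl_inf}) converges weak$^*$ to $\delta_{\bar\infty}$ in $\M(\bar\Sigma,\bar\sigma)$. These approximants can be taken to be periodic, hence ergodic. Replacing each $\delta_{\bar\infty}$ component by a sufficiently close such approximant in $\M(\Sigma,\sigma)$, I ensure that every remaining ergodic component $\mu_i$ satisfies $\mu_i([a_i]_{\bar\Sigma})>0$ for some symbol $a_i\in\N$, since for ergodic $\mu_i\ne\delta_{\bar\infty}$ the set $\bar\Sigma\setminus[\infty]_{\bar\Sigma}=\bigcup_{a\in\N}[a]_{\bar\Sigma}$ has positive $\mu_i$-mass.

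For each such $\mu_i$, Birkhoff's theorem applied to the continuous characteristic functions $\chi_{C_j}$ and $\chi_{[a_i]_{\bar\Sigma}}$ produces a $\mu_i$-generic point $x^i\in\bar\Sigma$ whose orbit visits $[a_i]_{\bar\Sigma}$ with frequency $\mu_i([a_i]_{\bar\Sigma})$ and whose Birkhoff averages converge to the $\mu_i$-integrals of the test functions. Using Lemma \ref{lem:inc} to synchronize return times (or directly the positive frequency of visits to $[a_i]_{\bar\Sigma}$), I extract, for a large parameter $N$, an orbit segment ${\bf w}^i=x^i_{p_i}x^i_{p_i+1}\cdots x^i_{p_i+N_i}$ with $N_i$ within $O(1)$ of $\alpha_iN$, both endpoints equal to $a_i$, and empirical measure $\epsilon/2$-close to $\mu_i$ on every $C_j$. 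Each ${\bf w}^i$ is $\bar\Sigma$-admissible and anchored at symbols in $\N$. Transitivity of $(\Sigma,\sigma)$ then supplies $\Sigma$-admissible connectors ${\bf c}^i$ from $a_i$ to $a_{i+1}$ (indices cyclic) of $N$-independent length; by Lemma \ref{lem_unir_adm}, the infinite periodic concatenation of ${\bf w}^1{\bf c}^1\cdots{\bf w}^k{\bf c}^k$ defines a periodic point in $\bar\Sigma$. Its associated periodic measure approximates $\sum_i\alpha_i\mu_i$, and hence $\mu$, within $\epsilon$ on every $C_j$ once $N$ is chosen large enough.

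The main obstacle is that Lemma \ref{lem_unir_adm} requires all gluing symbols to lie in $\N$, so a segment ending at $\infty$ cannot be bridged by a path in $\Sigma$. The preprocessing step replacing any $\delta_{\bar\infty}$ component by an ergodic measure supported on $\Sigma$ bypasses this issue, and for any other ergodic component the required $\N$-anchor $a_i$ exists automatically from $\mu_i\ne\delta_{\bar\infty}$. The only remaining delicate point is arranging the synchronized segment lengths $N_i\approx\alpha_iN$ to occur at admissible return times, which is precisely what Lemma \ref{lem:inc} provides.
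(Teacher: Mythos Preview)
Your outline is correct and follows essentially the same splicing argument as the paper: reduce to a finite rational convex combination of ergodic measures, replace any $\delta_{\bar\infty}$ component by an approximant in $\M(\Sigma,\sigma)$ via Corollary~\ref{prop:rome} and Theorem~\ref{topo:r}, use Lemma~\ref{lem:inc} together with Birkhoff to produce for each remaining component an orbit segment of controlled length anchored at a symbol in $\N$, and glue the segments with $\Sigma$-connectors via transitivity and Lemma~\ref{lem_unir_adm}. The only cosmetic differences are that the paper tests against an arbitrary finite family in $C(\bar\Sigma)$ rather than $\N$-cylinders (your reduction is valid but needs the compactness of $\M(\bar\Sigma,\bar\sigma)$, not just Remark~\ref{rem:clopen}, to conclude that $\N$-cylinder evaluations generate the weak$^*$ topology on invariant measures), and the paper anchors all components in a common finite union $K_M=\bigcup_{i\le M}[i]_{\bar\Sigma}$ with equal weights rather than using individual symbols $a_i$ with rational weights.
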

\begin{proof} It is well known that $\M(\bar\Sigma, {\bar\sigma})$ is a compact convex set whose extreme points are precisely the ergodic measures. Moreover, by the ergodic decomposition theorem, $\M(\bar\Sigma, {\bar\sigma})$ is a Choquet simplex, and any invariant probability measure can be uniquely represented as an average over the set of ergodic measures. Observe that the set of measures of the form $\mu = \frac{1}{n} (\mu_1 + \ldots + \mu_n)$, where each $\mu_i$ is ergodic, is dense in $\M(\bar\Sigma, {\bar\sigma})$. To complete the proof, it suffices to show that if $\mu = \frac{1}{N} \sum_{i=1}^N \mu_i$, where each $\mu_i$ is ergodic, then $\mu$ can be approximated by periodic measures.

If $(\Sigma, \sigma)$ has a finite uniform Rome, then $\mu_i \neq \delta_{\bar\infty}$ for any $i \in \mathbb{N}$ (see Corollary \ref{prop:rome}). On the other hand, if $(\Sigma, \sigma)$ does not have a finite uniform Rome, then $\delta_{\bar\infty}$ is the weak$^*$ limit of a sequence in $\M(\Sigma, {\sigma})$ (see Remark \ref{rem:barinfty} and Corollary \ref{prop:rome}). It is enough to prove that $\mu$ can be approximated by periodic measures under the assumption that $\mu_i \neq \delta_{\bar\infty}$ for all $1 \leq i \leq N$. By Remark \ref{rem:1}, $\mu_i \neq \delta_{\bar\infty}$ if and only if $\mu_i([\infty]) < 1$.

Choose \( m, M \in \mathbb{N} \) such that \( \mu_i(K_M) > \frac{1}{m} \) for every \( i \in \{1, \ldots, N\} \), where \( K_M = \bigcup_{i=1}^M [i]_{\bar\Sigma} \). Since \( (\Sigma, \sigma) \) is transitive, there exists \( L_0 \in \mathbb{N} \) such that any pair of symbols in \( \{1, \ldots, M\} \) can be connected by an admissible word in \( \Sigma \) of length at most \( L_0 \).

Let $\mathcal{H} = \{f_1, \ldots, f_s\} \subseteq C(\bar\Sigma)$ be a collection of continuous functions on $\bar\Sigma$. Fix $\epsilon > 0$. We will prove that the set 
\begin{equation*}
\Omega= \left\{ \nu \in \M(\bar\Sigma,{\bar\sigma}): \left|\int fd\nu-\int fd\mu \right|<\epsilon, \text{ for every }f\in \mathcal{H} \right\},
\end{equation*}
contains periodic measures. Since sets of the form $\Omega$ are a basis for the weak$^*$ topology on $\M(\bar\Sigma, {\bar\sigma})$, this implies that $\mu$ can be approximated by periodic measures. 

Set $C_0 = \max_{f \in \mathcal{H}} \max_{x \in \bar\Sigma} |f(x)|$. By uniform continuity, there exists $\delta > 0$ such that if $d_\rho(x, y) < \delta$, then $|f(x) - f(y)| <\frac{1}{4} \epsilon$ for every $f\in\mathcal{H}$. Choose $N_0 \in \mathbb{N}$ such that $\frac{1}{2^{N_0}} < \delta$. Note that if $x, y \in \bar\Sigma$ satisfy $x_j = y_j$ for all $j \in \{1, \ldots, N_0\}$, then $d_\rho(x, y) < \delta$, and consequently, $|f(x) - f(y)| <\frac{1}{4} \epsilon$, for every $f\in\mathcal{H}$.
 


 Define 
\begin{equation*}
A^s_{i,\epsilon}= \left\{ x\in \bar\Sigma: \left|\frac{1}{m}\sum_{k=0}^{m-1}f(\sigma^k x)-\int f d\mu_i \right|<\frac{1}{4}{\epsilon},\text{ for every }f\in\mathcal{H} \text{ and }m\ge s \right\}.
\end{equation*}
It follows from Birkhoff ergodic theorem that $\mu_i(A_{i,\epsilon}^s)\to 1$ as $s\to\infty$. Choose $s_0 \in \N$ such that $\mu_i(A_{i,\epsilon}^{s_0})\ge 1-\frac{1}{2^{2m+1}}$ for every $i\in  \{1, \dots,  N  \}$.


Let $n\ge s_0+N_0$. For each $i\in\{1,\ldots,N\}$ there exists $n_i\in [n,n+2m)$ such that $\mu_i(K_M\cap \bar\sigma^{-n_i} (K_M))>\frac{1}{2^{2m}}$ (see Lemma \ref{lem:inc}). Therefore,
$$\mu_i( A^{s_0}_{i,\epsilon}\cap K_M\cap \bar\sigma^{-n_i}(K_M))>\frac{1}{2^{2m+1}},$$
for every $i\in\{1,\ldots,N\}$. Choose a point $x_i\in  A^{s_0}_{i,\epsilon}\cap K_M\cap \bar\sigma^{-n_i}(K_M)$. 

We will construct a periodic point $x(n)$ by gluing orbits of the points $(x_i)^N_{i=1}$. Let ${\bf y_i}$ be the admissible word formed by the first $(n_i+1)$ coordinates of $x_i$. Observe that the first and last letters of ${\bf y_i}$ are in $\{1,...,M\}$. Consider an admissible word of the form ${\bf y}={\bf y_1 w_1y_2w_2...y_Nw_N}$, where ${\bf w_i}$ are admissible words in $\Sigma$ of length at most $L_0$, connecting ${\bf y_i}$ and ${\bf y_{i+1}}$, where ${\bf y_{N+1}}={\bf y_1}$. Define $m_0=0$ and  $m_k=\sum_{i=1}^k \ell({\bf y_iw_i})$ for $k\ge 1$, where $\ell({\bf w})$ is the length of the word ${\bf w}$. Define $x(n)=({\bf yy...})\in \bar\Sigma$ (see Lemma \ref{lem_unir_adm}).  We claim that for sufficiently large $n$, the periodic measure associated to $x(n)$, denoted by $\mu_{x(n)}$, belongs to $\Omega$.

It follows from the construction that 
\begin{enumerate}
\item $|S_{n_k-N_0}f(\bar\sigma^{m_{k-1}}x(n))-S_{n_k-N_0}f(x_k)|\le (n_k-N_0)\frac{1}{4}\epsilon$,
\item $|S_{m_k-n_k+N_0}f(\bar\sigma^{m_{k-1}+n_k-N_0} x(n))|\le (m_k-n_k+N_0)C_0$,
\end{enumerate}
for every $k\in \{1,\ldots,N\}$ and $f\in\mathcal{H}$. Since $x_k\in A_{k,\epsilon}^{s_0}$ and $n_k-N_0\ge s_0$, we know that $$|S_{n_k-N_0}f(x_k)-(n_k-N_0)\int fd\mu_i|<\frac{1}{4}\epsilon (n_k-N_0),$$ for any $f\in\mathcal{H}$.  We conclude that 
$$|S_{m_N}f(x(n))-\sum_{k=1}^N (n_k-N_0)\int f d\mu_k|\le \frac{1}{2}\epsilon \bigg(\sum_{k=1}^N(n_k-N_0) \bigg)+ C_0\sum_{k=1}^N(m_k-n_k+N_0).$$
Note that $m_k - n_k \leq L_0$ and that $\frac{n_k - N_0}{m_N} \to \frac{1}{N}$ as $n \to \infty$. Furthermore, for sufficiently large $n$, we have $|\frac{1}{m_N} S_{m_N} f(x(n)) - \int f  d\mu| < \epsilon$. Consequently, $\mu_{x(n)} \in \Omega$. 
 \end{proof}

\begin{proposition}\label{lem:21} Let $(\Sigma, \sigma)$ be a transitive countable Markov shift.  The set of periodic measures of $(\Sigma,\sigma)$ is dense in $\M(\bar\Sigma,\bar\sigma)$.\end{proposition}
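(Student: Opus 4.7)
By Proposition \ref{prop:miss}, it suffices to show that every periodic measure $\mu_p$ of $(\bar\Sigma, \bar\sigma)$ is a weak$^*$ limit of periodic measures of $(\Sigma, \sigma)$. If $p \in \Sigma$ there is nothing to prove, so two cases remain: either some but not all coordinates of $p$ are equal to $\infty$, or $p = \bar\infty$.

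In the mixed case I would rotate the orbit so that $p_1 \in \N$. The word $p_1 p_2 \cdots p_q p_1$, where $q$ denotes the period of $p$, is $\bar\Sigma$-admissible, so the characterization preceding Lemma \ref{lem_unir_adm} produces $\Sigma$-admissible words $w_1^k w_2^k \cdots w_{q+1}^k$ with $w_j^k \to p_j$ for every $1 \le j \le q$ and $w_{q+1}^k \to p_1$. Because $p_1$ is an isolated point of $\N \cup \{\infty\}$, for all large $k$ we have $w_1^k = w_{q+1}^k = p_1$, so $p_1 w_2^k \cdots w_q^k p_1$ is an admissible loop in $\Sigma$ and $y^k := \overline{p_1 w_2^k \cdots w_q^k} \in \Sigma$ is a periodic point. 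The orbit of $y^k$ converges coordinate-wise, hence in $\bar\Sigma$, to the orbit of $p$, and therefore $\mu_{y^k} \to \mu_p$ in the weak$^*$ topology.

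The case $p = \bar\infty$ is the main obstacle. Since $\bar\infty \in \bar\Sigma$, Remark \ref{rem:barinfty} supplies a sequence $(\nu_n)_n$ in $\M(\Sigma, \sigma)$ converging to $\delta_{\bar\infty}$ in weak$^*$, so by a diagonal argument it suffices to approximate each $\nu_n$ by periodic measures of $\Sigma$. For this, I would reinspect the proof of Proposition \ref{prop:miss}: approximate $\nu_n$ by a finite convex combination $\frac{1}{N}\sum_{i=1}^N \mu_i$ of its ergodic components (via the ergodic decomposition and rational approximation of the weights), noting that each $\mu_i$ lies in $\M^e(\Sigma, \sigma)$ and thus satisfies $\mu_i(\Sigma) = 1$. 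Consequently the points $x_i$ in that construction can be chosen in $\Sigma$, and since the gluing words are already admissible in $\Sigma$ by transitivity, every concatenated periodic point $x(n)$ lies in $\Sigma$. This yields periodic measures in $\M(\Sigma, \sigma)$ approximating $\nu_n$, completing the diagonal argument.
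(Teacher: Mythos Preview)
Your proposal is correct and follows the same route as the paper: reduce via Proposition~\ref{prop:miss} to approximating periodic measures of $(\bar\Sigma,\bar\sigma)$, handle the mixed case by approximating the $\bar\Sigma$-admissible loop coordinatewise (using that finite symbols are isolated), and treat $\delta_{\bar\infty}$ by first passing to measures in $\M(\Sigma,\sigma)$ via Corollary~\ref{prop:rome}/Remark~\ref{rem:barinfty} and then to periodic measures of $\Sigma$. The only cosmetic difference is that the paper invokes the density of periodic measures in $\M(\Sigma,\sigma)$ as a known fact, whereas you rerun the construction of Proposition~\ref{prop:miss} inside $\Sigma$ to obtain it; your version is slightly more self-contained.
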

\begin{proof} By Proposition \ref{prop:miss}, it suffices to prove that periodic measures of $(\bar\Sigma,\bar\sigma)$ can be approximated by periodic measures of $(\Sigma,\sigma)$.

 Let $x\in \bar\Sigma$ be a periodic point of order $n$. Suppose that $x\ne \bar\infty$, and further assume that $x_1\in\N$. Then, there are periodic points $(y^k)_k$ in $\Sigma$ of order $n$ such that $\lim_{k\to\infty} y^k_i=x_i$, for every $i\in\N$. The periodic measures associated to the sequence $(y^k)_k$ converges to the periodic measure associated to $x$. If $x=\bar\infty$, then $\Sigma$ does not have a finite uniform Rome and there are sequences of measures in $\M(\Sigma,\sigma)$ that converge to $\delta_{\bar\infty}$ (see Theorem \ref{topo:r} and Corollary \ref{prop:rome}). Since periodic measures in $(\Sigma,\sigma)$ is dense in $\M(\Sigma,\sigma)$ (see Remark \ref{rem:perio}), we can approximate  $\delta_{\bar\infty}$ with periodic measures of $(\Sigma,\sigma)$. \end{proof}

\begin{proof}[Proof of Theorem \ref{thm:po}]
First, note that Proposition \ref{lem:21} implies that $\M(\Sigma,\sigma)$ is dense in $\M(\bar\Sigma, {\bar\sigma})$. Since periodic measures are ergodic and thus extreme points of $\M(\bar\Sigma, {\bar\sigma})$, it follows from Proposition \ref{prop:miss} and \cite[Theorem 2.3]{los} that $\M(\bar\Sigma, {\bar\sigma})$ is affine homeomorphic to the Poulsen simplex. 
\end{proof}

\begin{remark}\label{rem:perio} It follows from Proposition \ref{lem:21} that the set of periodic measures of $(\Sigma,\sigma)$ is dense in $\M(\Sigma, \sigma)$ (see Remark \ref{rem:topextend}).
\end{remark}


\subsection{The set of new ergodic measures}
We now prove Theorem \ref{thm:erg}, which states that if the system satisfies the \(\mathcal{F}-\)property, then the compactification adds only one new ergodic measure: the Dirac measure at the fixed point at infinity. On the other hand, if the system does not satisfy the \(\mathcal{F}-\)property, then the set of ergodic measures in \(\mathcal{M}(\bar\Sigma, \bar\sigma) \setminus \mathcal{M}(\Sigma, \sigma)\) is dense in \(\mathcal{M}(\bar\Sigma, \bar\sigma)\). This reveals a striking dichotomy that highlights the intrinsic nature of the $\F-$property.

\begin{lemma}\label{lem:22} Suppose that $(\Sigma,\sigma)$ does not satisfy the $\F-$property. Then, periodic measures in $\M(\Sigma,{\sigma})$ can be approximated by periodic measures in $\M(\bar\Sigma,\bar\sigma)\setminus \M(\Sigma,\sigma)$.\end{lemma}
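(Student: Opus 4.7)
The plan is to exploit the failure of the $\F$-property to manufacture a $\bar\Sigma$-admissible loop that visits the symbol $\infty$, and then to splice this loop into the orbit of a given periodic point of $(\Sigma,\sigma)$ with vanishing relative frequency, producing periodic measures on $\bar\Sigma$ that are not supported on $\Sigma$ and that approximate the given periodic measure $\mu_p$.

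Since $(\Sigma,\sigma)$ does not satisfy the $\F$-property, by Definition \ref{def:F} there exist $a\in\N$ and $n\in\N$ such that there are infinitely many admissible words $L^k=a\,w_2^k\cdots w_{n-1}^k\,a$ of length $n$ in $\Sigma$. Each coordinate lies in the compact metric space $(\N\cup\{\infty\},\bar\rho)$, so a diagonal extraction produces a subsequence along which every $w_i^k$ converges to some $w_i^\infty\in\N\cup\{\infty\}$. Since the $L^k$ are pairwise distinct, at least one coordinate cannot be eventually constant, and a further extraction yields $w_{i_0}^\infty=\infty$ for some $i_0\in\{2,\ldots,n-1\}$. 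The word $L^\infty:=a\,w_2^\infty\cdots w_{n-1}^\infty\,a$ is then $\bar\Sigma$-admissible, by the characterization of $\bar\Sigma$-admissibility given in Section \ref{comp}, and contains the symbol $\infty$.

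Given a periodic point $p=\overline{p_1\cdots p_m}\in\Sigma$ with periodic measure $\mu_p$, transitivity of $(\Sigma,\sigma)$ yields admissible words $\alpha$ and $\beta$ in $\Sigma$ so that the block
\begin{equation*}
\mathbf{u}_N:=(p_1\cdots p_m)^N\,\alpha\,L^\infty\,\beta
\end{equation*}
can be validly concatenated with itself at its seams. By Lemma \ref{lem_unir_adm}, the infinite concatenation $\mathbf{u}_N\mathbf{u}_N\cdots$ defines a periodic point $q_N\in\bar\Sigma$ of period $T_N=Nm+C_0$, where $C_0$ is the combined length of the splice $\alpha L^\infty\beta$ and is independent of $N$. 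Since $L^\infty$ contains an $\infty$-coordinate, the orbit of $q_N$ meets $[\infty]_{\bar\Sigma}$, so the associated periodic measure $\nu_N$ satisfies $\nu_N([\infty]_{\bar\Sigma})>0$; by Remark \ref{rem:1}, $\nu_N\in\M(\bar\Sigma,\bar\sigma)\setminus\M(\Sigma,\sigma)$.

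Finally, I would verify that $\nu_N\to\mu_p$ in the weak$^*$ topology of $\M(\bar\Sigma,\bar\sigma)$. For $f\in C(\bar\Sigma)$, uniform continuity of $f$ on the compact space $\bar\Sigma$ allows one to split the Birkhoff sum $\sum_{k=0}^{T_N-1}f(\bar\sigma^k q_N)$ into a main contribution close to $Nm\int f\,d\mu_p$, arising from the $Nm$ orbit positions whose first coordinates match those of the corresponding positions in the orbit of $p$, plus an error of size $O(C_0\|f\|_\infty)$ coming from the splice and from a bounded boundary layer between the periodic part and the splice. Dividing by $T_N$ and sending $N\to\infty$ gives $\int f\,d\nu_N\to\int f\,d\mu_p$. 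The main obstacle is the construction of $L^\infty$: the diagonal extraction combined with the pigeonhole observation forcing some $w_{i_0}^\infty=\infty$ is precisely where the failure of the $\F$-property is used, while every other step is a routine orbit-gluing construction.
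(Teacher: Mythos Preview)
Your proof is correct and follows essentially the same approach as the paper's: construct a $\bar\Sigma$-admissible word containing the symbol $\infty$ from the failure of the $\F$-property, then splice it into the periodic block with vanishing relative frequency. The paper streamlines your construction slightly by choosing the $\bar\Sigma$-admissible word ${\bf w}$ so that it starts at $x_1$ and ends at $x_n$ (using transitivity to route through the symbol $a$ where the $\F$-property fails), which makes the connecting words $\alpha,\beta$ unnecessary; also, after your diagonal extraction no ``further extraction'' is needed, since a sequence in $\N$ converging in $\N\cup\{\infty\}$ that is not eventually constant must already have limit $\infty$.
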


\begin{proof}  Let $x\in\Sigma$ be a periodic point of period $n$. Let ${\bf w}=w_1\ldots w_m$ be a $\bar\Sigma$-admissible word such that $w_1=x_1$, $w_m=x_{n}$, and $w_i=\infty$ for some $1<i<m$. The existence of ${\bf w}$ follows from the transitivity of  $(\Sigma,\sigma)$ and the assumption that it  does not satisfy the $\F-$property. Let ${\bf x}=x_1\ldots x_n$ and construct the periodic point $z_k$ associated to the admissible word ${\bf x \ldots x w}$, where the block ${\bf x}$ appears $k$ times. The sequence of periodic measures associated to $(z_k)_k$ converges to the periodic measure associated to $x\in\Sigma$. \end{proof}

\begin{lemma} \label{lema_infi} Let $(\Sigma ,\sigma)$ be a countable Markov shift satisfying the $\F-$property. Then the following statements hold:
\begin{enumerate}
\item  If $(x_1, x_2, \ldots)\in \bar\Sigma$, then there is no $n,m\in\N$ such that $x_n,x_{n+m+1}\in \N$, and $x_{n+k}=\infty$, for $1\le k\le m$.
\item $\delta_{\bar\infty}$ is the unique ergodic measure in $\M(\bar\Sigma,\bar\sigma)\setminus \M(\Sigma,\sigma)$.
 \end{enumerate}
\end{lemma}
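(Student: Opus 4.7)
The plan is to establish part (1) using the approximation-by-admissible-words characterization of $\bar\Sigma$ combined with transitivity, and then to derive part (2) from part (1) via ergodicity and the recurrence properties of orbits visiting a set of positive measure.

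For part (1), I would argue by contradiction. Assume there exists $(x_1,x_2,\ldots)\in\bar\Sigma$ with $x_n=a\in\N$, $x_{n+m+1}=b\in\N$, and $x_{n+k}=\infty$ for $1\le k\le m$. Using the characterization right before Lemma \ref{lem_unir_adm}, the block $x_n x_{n+1}\cdots x_{n+m+1}$ is approximated by admissible words of the form ${\bf w}^\ell=a\, w^\ell_1 \cdots w^\ell_m\, b$ in $\Sigma$ with $w^\ell_j\to\infty$ as $\ell\to\infty$ for each $j$. This yields infinitely many distinct admissible words in $\Sigma$ of length $m+2$ from $a$ to $b$. By transitivity of $(\Sigma,\sigma)$, there is an admissible word $v_1v_2\cdots v_r$ with $v_1=b$ and $v_r=a$. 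Concatenating each ${\bf w}^\ell$ with this path (identifying the common $b$) produces infinitely many distinct admissible words of fixed length beginning and ending with $a$, contradicting the $\F-$property.

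For part (2), let $\mu\in\M(\bar\Sigma,\bar\sigma)\setminus\M(\Sigma,\sigma)$ be ergodic. By Remark \ref{rem:1}, $\mu([\infty]_{\bar\Sigma})\in(0,1]$, and the case $\mu([\infty]_{\bar\Sigma})=1$ forces $\mu=\delta_{\bar\infty}$. So it suffices to rule out $\mu([\infty]_{\bar\Sigma})\in(0,1)$. Suppose this holds; write $A=[\infty]_{\bar\Sigma}$, so $A^c=\bigcup_{i\in\N}[i]_{\bar\Sigma}$. Since $\mu$ is ergodic with $\mu(A)\in(0,1)$ and $\mu(A^c)\in(0,1)$, Birkhoff's ergodic theorem (applied to $\chi_A$ and $\chi_{A^c}$) guarantees a point $x\in\bar\Sigma$ for which both $\{k\ge 0:\bar\sigma^k x\in A\}$ and $\{k\ge 0:\bar\sigma^k x\in A^c\}$ are infinite; equivalently, $\{j\ge 1:x_j=\infty\}$ and $\{j\ge 1:x_j\in\N\}$ are both infinite.

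Since both sets are infinite, I can pick $n\ge 1$ with $x_n\in\N$ and $x_{n+1}=\infty$, and then set $j=\min\{k>n+1:x_k\in\N\}$, which is finite. Letting $m=j-n-1\ge 1$, we have $x_n\in\N$, $x_{n+k}=\infty$ for $1\le k\le m$, and $x_{n+m+1}\in\N$, contradicting part (1). Hence $\mu([\infty]_{\bar\Sigma})=1$ and $\mu=\delta_{\bar\infty}$. The main obstacle is part (1): one must be careful that the approximating sequence of admissible words in $\Sigma$ genuinely has its middle coordinates escaping to $\infty$, so that infinitely many distinct words appear; once this is secured, the transitivity-based concatenation trick delivers the contradiction with the $\F-$property, and part (2) follows by a routine ergodic recurrence argument.
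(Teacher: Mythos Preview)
Your proof is correct and follows essentially the same approach as the paper's: part (1) is proved by approximating the $\bar\Sigma$-admissible block by $\Sigma$-admissible words with escaping middle coordinates to contradict the $\F$-property, and part (2) by a Birkhoff recurrence argument producing the forbidden block. Your version is slightly more explicit---you close the loop via transitivity in (1) and spell out the index-picking in (2)---but the ideas are identical.
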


\begin{proof}[Proof Lemma \ref{lema_infi}]
We start with the proof of (a). Assume by way of contradiction that there exists a point  $x=(x_1, x_2, \ldots)\in \bar\Sigma$ such that $x_n,x_{n+m+1}\in \N$, and $x_{n+k}=\infty$, for $1\le k\le m$. In particular, the word ${\bf w}=x_n\infty \ldots\infty x_{n+m+1}$ is $\bar\Sigma$-admissible, and therefore there are admissible words ${\bf w_i}=x_n y_1^i\ldots y_m^ix_{n+m+1}$ in $\Sigma$ such that $\lim_{i\to \infty}y_k^i=\infty$, for every $1\le k\le m.$ In particular, there are infinitely many admissible words of length $m+2$ that connect $x_n$ with $x_{n+m+1}$, which contradicts that $\Sigma$ has the $\F-$property. In order to prove (b) observe that if $\mu([\infty]_{\bar\Sigma}\cap \bar\sigma^{-1}[a]_{\bar\Sigma})>0,$ for some $a\in\N$, then by the Birkhoff ergodic theorem almost every point returns infinitely many times to $[\infty]_{\bar\Sigma}$ and $[a]_{\bar\Sigma}$. By part (a) this is not possible. We conclude that if $\mu([\infty]_{\bar\Sigma})>0$, then $\mu([a]_{\bar\Sigma})=0$ for every $a\in\N$, and therefore $\mu=\delta_{\bar\infty}$ (see Remark \ref{rem:1}).\end{proof}

The fact that, for a system satisfying the $\F-$property, the measure \(\delta_{\bar\infty}\) is the only new ergodic measure added by the compactification \((\bar\Sigma, \bar\sigma)\) was previously established for finite entropy systems and for a broader class of metrics in \cite[Proposition 5.1]{gs} and \cite[Lemma 4.7]{it2}.

\begin{proof}[Proof of Theorem \ref{thm:erg}]
It follows by Lemma \ref{lema_infi} that if $(\Sigma,\sigma)$ satisfies the $\F-$property, then $\delta_{\bar\infty}$ is the unique ergodic measure in  $\M(\bar\Sigma,\bar\sigma) \setminus \M(\Sigma,\bar\sigma)$. It remains to prove that if $(\Sigma,\sigma)$ does not satisfy the $\F-$property, then the set of ergodic measures in $\M(\bar\Sigma,\bar\sigma) \setminus \M(\Sigma,\bar\sigma)$ is dense in $\M(\bar\Sigma,\bar\sigma)$. 

Let \( Y \) be the closure of the set of ergodic measures in \( \mathcal{M}(\bar\Sigma, \bar\sigma) \setminus \mathcal{M}(\Sigma, \bar\sigma) \). We claim that \( Y = \mathcal{M}(\bar\Sigma, \bar\sigma) \). By Lemma \ref{lem:22} and Remark \ref{rem:perio}, we have \( \mathcal{M}(\Sigma, \sigma) \subseteq Y \). Furthermore, by Proposition \ref{lem:21}, \( \mathcal{M}(\Sigma, \sigma) \) is dense in \( \mathcal{M}(\bar\Sigma, \bar\sigma) \), which proves the claim.
 \end{proof}

\begin{remark}\label{rem:Fequiv}
      If $\Sigma$ has the $\F-$property, then the  affine map $\Phi:\M(\bar\Sigma,{\bar\sigma})\to\M_{\le1}(\Sigma,\sigma)$ defined by $\Phi(\mu)=\mu$ if $\mu\in\M(\Sigma,\sigma)$, and $\Phi(\delta_{\bar\infty})$ equal to the zero measure is an affine homeomorphism (see Theorem \ref{topo:r} and Lemma \ref{lema_infi}).  
\end{remark}

\subsection{The space invariant probability measures}
A direct consequence of Theorem \ref{thm:po} and Theorem \ref{thm:erg} is that we can describe the spaces of shift invariant probability measures endowed with the weak$^*$ topology.

\begin{theorem}
Let $(\Sigma ,\sigma)$ be a transitive countable Markov shift. Then,
\begin{enumerate}
\item If $(\Sigma, \sigma)$ satisfies the $\F-$property then $\M(\Sigma,\sigma)$ is  affine homeomorphic to a  Poulsen simplex minus a vertex and all of its convex combinations. 
\item  If $(\Sigma, \sigma)$ does not satisfy the $\F-$property then $\M(\Sigma,\sigma)$  is affine homeomorphic to a Poulsen simplex in which we remove a dense set of extreme vertices and all of its convex combinations, but keep another dense set of extreme vertices. 
\end{enumerate}
\end{theorem}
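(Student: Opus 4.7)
The plan is to realize $\M(\Sigma,\sigma)$ as a concrete affine subset of the Poulsen simplex $\M(\bar\Sigma,\bar\sigma)$ and determine which face is removed in each case. The affine homeomorphism of $\M(\bar\Sigma,\bar\sigma)$ onto a Poulsen simplex is provided by Theorem \ref{thm:po}. By Remark \ref{rem:1}, the inclusion identifies $\M(\Sigma,\sigma)$ with the set $\{\mu\in \M(\bar\Sigma,\bar\sigma): \mu([\infty]_{\bar\Sigma})=0\}$, which is affine in $\M(\bar\Sigma,\bar\sigma)$ because $\mu\mapsto \mu([\infty]_{\bar\Sigma})$ is an affine functional; Remark \ref{rem:topextend} ensures that this identification is also a topological embedding with respect to the weak$^*$ topologies. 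Thus the problem reduces to describing the complement inside the Poulsen simplex.

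Next, I would invoke the ergodic decomposition: for $\mu\in\M(\bar\Sigma,\bar\sigma)$ write $\mu=\int \nu\, d\tau(\nu)$ with $\tau$ supported on ergodic measures, so that $\mu([\infty]_{\bar\Sigma})=\int \nu([\infty]_{\bar\Sigma})\, d\tau(\nu)$. Any ergodic $\nu$ either satisfies $\nu([\infty]_{\bar\Sigma})=0$ (and hence lies in $\M(\Sigma,\sigma)$) or $\nu([\infty]_{\bar\Sigma})>0$ (and hence lies in $\M(\bar\Sigma,\bar\sigma)\setminus\M(\Sigma,\sigma)$). Consequently, $\M(\bar\Sigma,\bar\sigma)\setminus\M(\Sigma,\sigma)$ is precisely the union of convex combinations $\lambda\nu+(1-\lambda)\eta$ with $\lambda\in(0,1]$, $\nu$ a ``new'' ergodic measure, and $\eta\in\M(\bar\Sigma,\bar\sigma)$; equivalently, it is the union of all faces of the simplex touching a new extreme point with positive weight.

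To finish, I would combine this characterization with Theorem \ref{thm:erg}. Under the $\F-$property, the set of new ergodic measures equals $\{\delta_{\bar\infty}\}$, so removing this single vertex together with all of its convex combinations yields the description claimed in (a). Without the $\F-$property, the new ergodic measures form a dense subset of the ergodic measures of $\M(\bar\Sigma,\bar\sigma)$, so a dense set of extreme points together with every convex combination involving at least one of them is removed; meanwhile, by Remark \ref{rem:perio} (a direct consequence of Proposition \ref{lem:21}), the periodic measures of $(\Sigma,\sigma)$ — all lying in $\M(\Sigma,\sigma)$ — remain dense in $\M(\bar\Sigma,\bar\sigma)$, so another dense set of extreme points is retained. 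This yields (b).

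I do not expect a serious obstacle, since the statement is essentially a formal translation of Theorems \ref{thm:po} and \ref{thm:erg} through the ergodic decomposition. The only delicate point worth verifying is the equivalence ``$\mu([\infty]_{\bar\Sigma})>0$ iff the ergodic decomposition of $\mu$ charges a new ergodic measure,'' which is immediate from the integral formula above together with the dichotomy for ergodic measures.
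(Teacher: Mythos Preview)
Your proposal is correct and follows exactly the approach the paper takes: the theorem is stated there as ``a direct consequence of Theorem~\ref{thm:po} and Theorem~\ref{thm:erg}'' with no further argument. You have simply made explicit the bookkeeping (Remarks~\ref{rem:topextend}, \ref{rem:1}, and~\ref{rem:perio}, plus the ergodic decomposition) that the paper leaves implicit.
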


We note that the case in which $(\Sigma, \sigma)$ satisfies the $\F-$property was first obtained in  \cite[Corollary 1.3]{iv}.

\section{The space of ergodic measures of $(\Sigma,\sigma)$}\label{thespaceofmeasures}

In \cite{los}, the authors established several remarkable properties of the Poulsen simplex: they proved it is homogeneous, unique, and universal (see \cite[Theorems 2.3 and 2.5]{los}). Additionally, they showed that its set of extreme points is homeomorphic to \(\ell_2\) (see \cite[Theorem 3.1]{los}). As a consequence, for any transitive subshift of finite type, the space of ergodic measures is homeomorphic to \(\ell_2\). In this section, we prove Theorem \ref{theo_l2}, which states that this result extends to the noncompact setting.

We follow \cite[Section 3]{los} closely  and make use of some results from infinite dimensional topology. Let $Q=[-1,1]^\N$ be the \emph{Hilbert cube} and $P=(-1,1)^\N$ its \emph{pseudo-interior}. It is a theorem of Keller \cite{ke} that any infinite dimensional compact convex set in the Hilbert space $\ell_2$ is homeomorphic to the Hilbert cube. In his construction, Poulsen \cite{pou} introduced a simplex with a dense set of extreme points as a subset of $\ell_2$. In particular, the Poulsen simplex is homeomorphic to $Q$. Furthermore,  Anderson \cite{an} proved that $P$ is homeomorphic to $\ell_2$.  

Let $K\subseteq\ell_2$ be an infinite dimensional compact convex set and set $I^n:=[-1,1]^n$. Denote by $C(I^n,K)$ the space of continuous functions from $I^n$ to $K$ with the compact-open topology. A subset $B\subseteq K$ is said to be a $T-$set if for every $n\in \N$, the set 
$$C_B^n=\{f\in C(I^n,K): f(I^n)\subseteq B\},$$
is dense in $C(I^n,K)$. It is proved in \cite[Corollary 4.3, Chapter IV]{bp} that if $B$ is a subset of the set of extreme points of $K$, a  $T-$set and a $G_\delta-$set, then there exists a homeomorphism $h:Q\to K$ such that $h(P)=B$. The proof of Theorem \ref{theo_l2} reduces to the next two lemmas, where we consider $K=\M(\bar\Sigma,\bar\sigma)$ and $B=\M^e(\Sigma,\sigma)$. Let $\eta$ be a metric on $\M(\bar\Sigma,\bar\sigma)$ compatible with the weak$^*$ topology.

\begin{lemma}\label{lem:Gdelta} The set $\M^e(\Sigma,\sigma)$ is a $G_\delta-$set of $\M(\bar\Sigma,\bar\sigma).$    
\end{lemma}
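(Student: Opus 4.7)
The plan is to express $\M^e(\Sigma,\sigma)$ as the intersection of two $G_\delta$-subsets of the compact metric space $\M(\bar\Sigma,\bar\sigma)$, and then invoke the fact that such an intersection is again $G_\delta$.

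First, I would show that $\M(\Sigma,\sigma)$, viewed as a subset of $\M(\bar\Sigma,\bar\sigma)$ via Remark \ref{rem:1}, is a $G_\delta$-set. By that remark,
\begin{equation*}
\M(\Sigma,\sigma) = \{\mu \in \M(\bar\Sigma,\bar\sigma) : \mu([\infty]_{\bar\Sigma}) = 0\}.
\end{equation*}
Since $[\infty]_{\bar\Sigma}$ is the complement of $\bigcup_{i\in\N}[i]_{\bar\Sigma}$, and each $[i]_{\bar\Sigma}$ is open in $\bar\Sigma$ by Remark \ref{rem:clopen}, the set $[\infty]_{\bar\Sigma}$ is closed. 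Hence, by the Portmanteau theorem, the map $\mu \mapsto \mu([\infty]_{\bar\Sigma})$ is upper semicontinuous on $\M(\bar\Sigma,\bar\sigma)$, and writing
\begin{equation*}
\M(\Sigma,\sigma) = \bigcap_{n \in \N}\left\{\mu \in \M(\bar\Sigma,\bar\sigma) : \mu([\infty]_{\bar\Sigma}) < 1/n\right\}
\end{equation*}
exhibits $\M(\Sigma,\sigma)$ as a $G_\delta$-subset of $\M(\bar\Sigma,\bar\sigma)$.

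Second, since $(\bar\Sigma,\bar\sigma)$ is a continuous dynamical system on a compact metric (hence Polish) space, the classical result of Parthasarathy \cite[Theorem 2.1]{par} referenced in the introduction applies and shows that $\M^e(\bar\Sigma,\bar\sigma)$ is a $G_\delta$-subset of $\M(\bar\Sigma,\bar\sigma)$.

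Finally, I would verify the identification $\M^e(\Sigma,\sigma) = \M^e(\bar\Sigma,\bar\sigma) \cap \M(\Sigma,\sigma)$, which follows because a probability measure supported on the $\bar\sigma$-invariant set $\Sigma$ is $\bar\sigma$-ergodic if and only if it is $\sigma$-ergodic for the restricted dynamics. Since the intersection of two $G_\delta$-sets in a metric space is again $G_\delta$, this concludes the proof. The argument is essentially a straightforward assembly of (i) the metric/topological structure of the compactification and (ii) the classical Parthasarathy--Oxtoby result applied to $(\bar\Sigma,\bar\sigma)$; there is no substantive technical obstacle beyond verifying that $[\infty]_{\bar\Sigma}$ is closed.
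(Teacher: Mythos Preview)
Your proof is correct and follows essentially the same route as the paper: both arguments exhibit $\M^e(\Sigma,\sigma)$ as the intersection of $\M^e(\bar\Sigma,\bar\sigma)$ (a $G_\delta$ by the Parthasarathy-type argument, which the paper carries out explicitly via the sets $G_k=\{\mu:\mu=\tfrac12(\mu_1+\mu_2)\Rightarrow \eta(\mu_1,\mu_2)<1/k\}$ while you simply cite \cite{par}) with $\M(\Sigma,\sigma)=\bigcap_k\{\mu:\mu([\infty]_{\bar\Sigma})<1/k\}$. The only additional content in the paper's version is the observation that the open sets $O_k=G_k\cap J_k$ are dense (they contain the periodic measures of $(\Sigma,\sigma)$), a fact not needed for the present lemma but used in the subsequent $T$-set argument.
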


\begin{proof} For each $k\in\N$ consider the sets $$G_k=\{\mu\in \M(\bar\Sigma,\bar\sigma):\text{ if }\mu=\frac{1}{2}(\mu_1+\mu_2), \text{ where }\mu_1,\mu_2\in\M(\bar\Sigma,\bar\sigma),  \text{ then }\eta(\mu_1,\mu_2)<\frac{1}{k}\},$$
and 
$$J_k=\left\{\mu\in \M(\bar\Sigma,\bar\sigma):\mu([\infty])<\frac{1}{k}\right\}.$$
The sets $G_k$ and $J_k$ are open. In both cases it is easier to verify that their complements are closed. For $G_k$, this follows from the compactness of $\M(\bar\Sigma,\bar\sigma)$.  In the case of $J_k$, we note that $[\infty]$ is a closed subset of $\bar\Sigma$. Consequently, if $(\mu_n)_n$ converges to $\mu$, then 
$$\limsup_{n\to\infty}\mu_n([\infty])\le \mu([\infty]).$$
Set $O_k=G_k\cap J_k$. Since $O_k$ contains the set of periodic measures in $\M(\Sigma,\sigma)$, and this set is dense (see Proposition \ref{lem:21}), we conclude that $O_k$ is a dense open set. Finally, note that $\bigcap_{k\in\N}G_k=\M^e(\bar\Sigma,{\bar\sigma})$, therefore $\bigcap_{k\in\N}O_k=\M^e(\Sigma,\sigma)$ (see Remark \ref{rem:1}). 
\end{proof}

\begin{lemma}\label{lem:Tset} The set $\M^e(\Sigma,\sigma)$ is a $T-$set of $\M(\bar\Sigma,\bar\sigma).$    
\end{lemma}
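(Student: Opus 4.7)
The plan is to produce, for any $f \in C(I^n, \M(\bar\Sigma,\bar\sigma))$ and any $\epsilon > 0$, a continuous map $g: I^n \to \M^e(\Sigma,\sigma)$ with $\sup_{x\in I^n}\eta(f(x),g(x)) < \epsilon$. I would carry this out in three stages by combining a simplicial approximation with an orbit-gluing construction analogous to the one used in Proposition \ref{prop:miss}.

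First, I would exploit the uniform continuity of $f$ on the compact cube $I^n$ to choose $\delta > 0$ such that $\|x-y\| < \delta$ implies $\eta(f(x),f(y)) < \epsilon/3$. Fix a simplicial triangulation $T$ of $I^n$ with mesh smaller than $\delta$, and let $V$ denote its set of vertices. For every $v \in V$, use Proposition \ref{lem:21} (density of periodic measures of $(\Sigma,\sigma)$ in $\M(\bar\Sigma,\bar\sigma)$) together with the fact that periodic measures of $(\Sigma,\sigma)$ lie in $\M^e(\Sigma,\sigma)$ to pick a periodic measure $\mu_v \in \M^e(\Sigma,\sigma)$ with $\eta(f(v),\mu_v) < \epsilon/3$. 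Let $\{\phi_v\}_{v\in V}$ denote the partition of unity on $I^n$ given by the barycentric coordinates of $T$; the map $h(x) = \sum_v \phi_v(x) \mu_v$ is then a continuous map $I^n \to \M(\Sigma,\sigma)$ satisfying $\sup_x \eta(f(x),h(x)) < 2\epsilon/3$, but its values are convex combinations of periodic measures and thus typically not ergodic.

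Second, I would replace $h$ by an ergodic-valued map. On each closed simplex $\tau$ of $T$ with vertices $v_0,\ldots,v_k$, the map $h$ takes values in the finite-dimensional simplex spanned by $\mu_{v_0},\ldots,\mu_{v_k}$. Each $\mu_{v_i}$ corresponds to a periodic word $\mathbf{w}_i$ in $\Sigma$; by transitivity of $(\Sigma,\sigma)$, choose short admissible connectors $\mathbf{c}_{ij}$ linking $\mathbf{w}_i$ and $\mathbf{w}_j$. For a large integer $N$ (to be chosen depending on $\epsilon$), for each $x \in \tau$ I would build a long periodic orbit of $(\Sigma,\sigma)$ whose defining word is a concatenation of blocks $\mathbf{w}_i$ appearing approximately $\phi_{v_i}(x)N$ times each (interleaved by the connectors $\mathbf{c}_{ij}$), following the gluing pattern of Proposition \ref{prop:miss}. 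The associated periodic measure then lies in $\M^e(\Sigma,\sigma)$ and approximates $h(x)$ within $\epsilon/3$ for $N$ large enough.

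The main obstacle is the continuity of $x \mapsto g(x)$: since the integer counts $\lfloor \phi_{v_i}(x) N\rfloor$ are discontinuous in $x$, the naive periodic construction cannot be used directly. I would resolve this by either (a) replacing the periodic measures on each simplex $\tau$ by a continuous family of strictly ergodic measures obtained via a Jewett--Krieger-type construction, choosing an ergodic measure supported on a subshift whose generic orbits realize frequencies exactly $\phi_{v_i}(x)$, or (b) performing the gluing at each $x$ against a common large parameter $N(\tau)$ and then smoothing the discontinuities of the floor functions by a continuous interpolation through ergodic measures, using the path-connectedness of the set of extreme points of the Poulsen simplex $\M(\bar\Sigma,\bar\sigma)$ (Theorem \ref{thm:po}) and the observation that $\M^e(\Sigma,\sigma)$ is dense in this set of extreme points. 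Either way, the resulting map $g$ is continuous, takes values in $\M^e(\Sigma,\sigma)$, and satisfies $\sup_x \eta(f(x),g(x)) < \epsilon$, proving that $\M^e(\Sigma,\sigma)$ is a $T$-set.
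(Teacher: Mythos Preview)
Your first stage---simplicial approximation with periodic measures of $(\Sigma,\sigma)$ at the vertices and affine extension---is exactly the construction the paper carries out. The crucial divergence comes after that. You try to push the map further into $\M^e(\Sigma,\sigma)$ itself, and this is where the argument runs into trouble.

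The paper never tries to produce a continuous map into $\M^e(\Sigma,\sigma)$. Instead it uses the $G_\delta$ structure already established in Lemma~\ref{lem:Gdelta}: writing $\M^e(\Sigma,\sigma)=\bigcap_k O_k$ with each $O_k=G_k\cap J_k$ open and dense, one has $C^n_{\M^e(\Sigma,\sigma)}=\bigcap_k C^n_{O_k}$, and by Baire it suffices to show each $C^n_{O_k}$ is dense. The affine map $g$ you built in stage one already lands in $O_k$ (for a subdivision fine enough relative to $1/k$): its values are in $\M(\Sigma,\sigma)$, so $g(x)([\infty])=0$ and $g(\Delta_n)\subseteq J_k$; and each $g(\Delta^i_n)$ is a face of the simplex (the convex hull of finitely many ergodic measures is a face by uniqueness of ergodic decomposition), so any splitting $\mu=\tfrac12(\mu_1+\mu_2)$ forces $\mu_1,\mu_2$ back into that small-diameter face, giving $g(\Delta_n)\subseteq G_k$. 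No ergodic-valued map is needed.

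Your proposed second stage is not merely harder but genuinely incomplete. Option~(a) invokes a parametrized Jewett--Krieger construction, but no standard version gives a map depending \emph{continuously} on the target measure, and you would in addition have to force the output to lie in $\M^e(\Sigma,\sigma)$ rather than $\M^e(\bar\Sigma,\bar\sigma)$. Option~(b) appeals to path-connectedness of the extreme points of the Poulsen simplex, but those extreme points are $\M^e(\bar\Sigma,\bar\sigma)$; density of $\M^e(\Sigma,\sigma)$ inside that set does not give you paths within $\M^e(\Sigma,\sigma)$, and asserting path-connectedness of $\M^e(\Sigma,\sigma)$ itself is essentially the conclusion of Theorem~\ref{theo_l2}, which this lemma is meant to feed into. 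The Baire reduction is the missing idea.
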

\begin{proof} We follow the notation introduced in Lemma \ref{lem:Gdelta}. Observe that 
   $$C_{O_k}^n=\{f\in C(I^n,\M(\bar\Sigma,\bar\sigma)): f(I^n)\subseteq O_k \},$$ is an open subset of $C(I^n,\M(\bar\Sigma,\bar\sigma))$, for each $k,n\in\N$. Since $C^n_{\M^e(\Sigma,\sigma)}=\bigcap_{k\in\N} C^n_{O_k},$ the Baire category theorem implies that it suffices to establish the following result:\\\\
   {\bf Claim:} The set $C_{O_k}^n$ is dense in $C(I^n,\M(\bar\Sigma,\bar\sigma))$.\\
   
   {\bf Proof:} We closely follow the proof of \cite[Lemma 3.6]{los}. It is convenient to represent \( I^n \) as the \( n \)-dimensional simplex in \( \mathbb{R}^n \), which we denote by \( \Delta_n \). Let $f:\Delta_n\to \M(\bar\Sigma,\bar\sigma)$ be a continuous map and $\epsilon\in (0,1/k)$. In order to prove the claim, it is enough to construct a continuous map $g:\Delta_n\to \M(\bar\Sigma,\bar\sigma)$ such that $\|g-f\|:=\sup_{x\in \Delta_n}\eta(g(x),f(x))<\epsilon$, and $g(\Delta_n)\subseteq O_k$.\\
   
   Since $f$ is uniformly continuous, there exists $\delta>0$ such that if $|x-y|<\delta$, then $\eta(f(x),f(y))<\epsilon/4$,  where $|\cdot|$ is the euclidean norm in $\Delta_n$. Let $\{\Delta_n^i\}_i$ be a simplicial subdivision of $\Delta_n$ such that the diameter of each $\Delta_n^i$ is smaller than $\delta$, and $\{t_j\}_j$ be the set of vertices of  $\{\Delta_n^i\}_i$. Observe that $\diam f(\Delta_n^i)< \epsilon/4$.
   
Periodic measures of \( (\Sigma, \sigma) \) are dense in \( \mathcal{M}(\bar{\Sigma}, \bar{\sigma}) \) (see Proposition \ref{lem:21}). For each vertex \( t_j \), we select a periodic measure \( \mu_j \) of \( (\Sigma, \sigma) \) such that \( \eta(f(t_j), \mu_j) < \epsilon/4 \). Using the collection of periodic measures \( \{\mu_j\}_j \), we define the map \( g \). Specifically, we set \( g(t_j) = \mu_j \) for each vertex \( t_j \) and extend \( g \) to \( \Delta_n \) by requiring it to be affine on each simplex \( \Delta_n^i \).

By construction, every measure in \( g(\Delta_n) \) is a finite convex combination of periodic measures in \( \mathcal{M}(\Sigma, \sigma) \). Define \( F^i := g(\Delta_n^i) \). By the definition of \( g \), we have \( \eta(g(t_j), f(t_j)) < \epsilon/4 \) for every vertex \( t_j \). Since the diameter of \( F^i \) is attained at its vertices and \( \operatorname{diam} f(\Delta_n^i) < \epsilon/4 \), it follows that \( \operatorname{diam} g(\Delta_n^i) < \epsilon/2 \).  

For any \( x \in \Delta_n^i \), choose a vertex \( t_j \in \Delta_n^i \). Then,  
\[
\eta(f(x), g(x)) \leq \eta(f(x), f(t_j)) + \eta(f(t_j), g(t_j)) + \eta(g(t_j), g(x)) < \epsilon.
\]
Thus, we conclude that \( \|f - g\| < \epsilon \). 
   
It remains to show that \( g(\Delta_n) \subseteq O_k \). Observe that if \( \mu \in F^i \) and \( \mu = \frac{1}{2}(\mu_1 + \mu_2) \), then \( \mu_1, \mu_2 \in F^i \) since \( F^i \) is a face (as ergodic decomposition is unique; for the definition of \emph{face} see \cite[p.387]{j}). Moreover, we have \( \eta(\mu_1, \mu_2) \leq \operatorname{diam} F^i < \epsilon/2 < 1/k \). This implies \( g(\Delta_n^i) \subseteq G_k \) for every \( i \), and consequently, \( g(\Delta_n) \subseteq G_k \).  As noted earlier, the set \( g(\Delta_n) \) consists of probability measures in \( \Sigma \), meaning \( g(\Delta_n) \subseteq J_k \). Combining these results, we conclude that \( g(\Delta_n) \subseteq O_k \).\end{proof}

\begin{proof}[Proof of Theorem \ref{theo_l2}] It follows by Lemma \ref{lem:Gdelta} and Lemma \ref{lem:Tset} that $\M^e(\Sigma,\sigma)$ is  a $T$-set  and a $G_\delta$ subset of $K=\M(\bar\Sigma,\bar\sigma)$. Moreover, $\M^e(\Sigma,\sigma)$ is a subset of the set of extreme points of $K$. It follows by \cite[Corollary 4.3, Chapter IV]{bp} that $\M^e(\Sigma,\sigma)$ is homeomorphic to $P$, and therefore homeomorphic to $\ell_2$ (see \cite{an}). 
\end{proof}

\section{The dual variational principle}\label{sec:dvp}

It is a classical result that, for expanding maps on compact metric spaces, understanding the pressure functional is equivalent to understanding the space of invariant probability measures and their entropy. Indeed, as shown in \cite[Theorem 9.12]{w}, if \( T: X \to X \) is a continuous map on a compact metric space, has finite topological entropy, and the entropy map \( \mu \mapsto h(\mu) \) is upper semi-continuous, then for any \( T \)-invariant probability measure \( \mu \), we have  
\[
h(\mu) = \inf \left\{ P(f) - \int f \, d\mu : f \in C(X) \right\}.
\]  
Here, \( C(X) \) denotes the space of continuous real-valued functions on \( X \), and \( P(f) \) is the topological pressure of \( f \) with respect to the dynamical system \( (X, T) \).

In this section, as an application of our compactification, we prove Theorem \ref{theo:dual}, which extends this result to countable Markov shifts with possibly infinite topological entropy. We replace the finite entropy assumption by the finiteness of the pressure, and the upper semi-continuity of the entropy  map by the upper semi-continuity of the pressure map.


\begin{proof}[Proof of Theorem \ref{theo:dual}]
Observe that for every $g \in C_b(\Sigma)$ we  have
\begin{equation*}
P(\phi+g) - \int g \, d \mu \geq \left(h(\mu) + \int \phi d \mu + \int g\, d \mu\right) -  \int g\, d \mu\geq h(\mu) + \int \phi \, d \mu.
\end{equation*}
Thus,
\begin{equation*}
h(\mu) + \int \phi \, d \mu \leq \inf \left\{P(g+ \phi) - 	\int g\, d \mu : g \in C_b(\Sigma)	\right\}.
\end{equation*}

We proceed to prove the other inequality. Let \( E: \M_\phi(\Sigma,\sigma) \to \mathbb{R} \) be the \emph{free energy function} of $\phi$, defined by
\begin{equation*}
E(\nu)= h(\nu) + \int \phi \, d \nu.
\end{equation*}
It follows from Theorem \ref{thm:usc} that if \( s_\infty(\phi) < 1 \), then the function \( E \) is upper semi-continuous.
Note that $\M_\phi(\Sigma,\sigma)$ is a dense subset of
$\M(\bar\Sigma,\bar\sigma)$ (see Proposition \ref{lem:21}). Let $\bar{E}:\M(\bar\Sigma,\bar\sigma)\to \R \cup\{-\infty\},$
be the map defined by 
\begin{equation*} \label{fr}
\bar{E}(\nu):= \inf \left\{F(\nu): F:\M(\bar\Sigma,\bar\sigma)\to \R, \text{u.s.c.  with } F(\mu) \geq E(\mu) \text{ for } \mu \in \M_\phi(\Sigma,\sigma) \right\}.
\end{equation*}
The map $\bar{E}$ is  upper semi-continuous being the infimum of upper semi-continuous functions. It is also an extension of $E$. The map $\bar{E}$ is sometimes called upper semi-continuous regularization of $E$ (see \cite[Section 6.2, pp.360-363]{b}). We have the following characterization:
\begin{equation*} \label{fr1}
\bar{E}(\nu)= \sup \left\{ \limsup_{n \to \infty} \left( h(\mu_n) + \int \phi \, d \mu_n  \right) : \mu_n \to \nu , \mu_n \in  \M_\phi(\Sigma,\sigma)   \text{ for every } n \in \N    \right\},
\end{equation*}
where the convergence $\mu_n \to \nu$ is in the weak* topology of $\M(\bar\Sigma,\bar\sigma)$. Under our assumptions the map $\bar E$ satisfies the following properties:
\begin{enumerate}
\item \emph{If $\nu \in \M_\phi(\Sigma,\sigma)$, then $E(\nu)=\bar{E}(\nu)$.}\\
 This follows from the upper semi-continuity of $E$.
\item \emph{If $\nu \in  \M(\Sigma,\sigma)$ and $\int \phi \, d \nu=-\infty$, then $\bar{E}(\nu)=-\infty$. }\\
 Indeed, let $(\mu_n)_n$ be a sequence in $\M_\phi(\Sigma,\sigma)$ converging to $\nu$ such that the following holds, $\lim_{n\to\infty}\big(h(\mu_n)+\int \phi d\mu_n\big)=\bar{E}(\nu)$. 
Since $\phi$ is bounded above and the sequence $(\mu_n)_n$ converges in the weak$^*$ topology to $\nu$,  it is a consequence of \cite[Lemma 3]{jmu} that
\begin{equation*}
\limsup_{n \to \infty} \int \phi \, d \mu_n \leq \int \phi \, d \nu.
\end{equation*}
In particular, if $\int \phi \, d \nu = -\infty$, then $\lim_{n \to \infty} \int \phi \, d \mu_n =-\infty$. It follows by \cite[Lemma 3.3.1]{v} that $\lim_{n\to\infty}\big(h(\mu_n)+\int \phi d\mu_n\big)=-\infty$.

\item \emph{ If $\nu \in  \M(\bar\Sigma,\bar\sigma)$ is not in the convex hull of $\M(\Sigma,\sigma)\cup\{\delta_{\bar\infty}\}$, then $\bar{E}(\nu)=-\infty$.} \\
Indeed, let $(\mu_n)_n$ be a sequence in $\M_\phi(\Sigma,\sigma)$ converging to $\nu$ with the property that,  $\lim_{n\to\infty}\big(h(\mu_n)+\int \phi d\mu_n\big)=\bar{E}(\nu)$. If  $\liminf_{n\to\infty}\int \phi d\mu_n=-\infty,$ we use \cite[Lemma 3.3.1]{v} as in the previous case and deduce that $\bar{E}(\nu)=-\infty$. Assume that  $\liminf_{n\to\infty}\int \phi d\mu_n>-\infty$. In this case, it follows by \cite[Theorem 1.1]{v}  that there exists a subsequence of $(\mu_n)_n$ which converges on cylinders to a sub-probability measure. Theorem \ref{topo:r} implies that $\nu$ is in the convex hull of $\M(\Sigma,\sigma)\cup\{\delta_{\bar\infty}\}$, which is a contradiction.
\item \emph{If $\mu\in \M_\phi(\Sigma,\sigma)$ and $\lambda\in[0,1]$, then }
$$\bar E(\lambda\mu+(1-\lambda)\delta_{\bar\infty})=\lambda (h(\mu)+\int \phi d\mu)+(1-\lambda)P_\infty(\phi)$$
This follows from Theorem \ref{topo:r},  Theorem \ref{thm:usc} and parts (a)-(c).
\end{enumerate}
It is a consequence of the above properties  that $\bar{E}$ is an affine function.




Let 
\begin{equation*}
\mathcal{C}= \left\{(\nu, t) \in \M(\bar\Sigma,\bar\sigma) \times  \left(\R \cup \{-\infty\} \right): t \leq 	\bar{E}(\nu)	\right\}.
\end{equation*}
The set $\mathcal{C}$ is convex. Indeed, if $(\nu_1, t_1), (\nu_2, t_2) \in \mathcal{C}$ then there exist sequences $(\rho_n)_n$ and $(\mu_n)_n$  in $\M_\phi(\Sigma,\sigma)$ with $\rho_n \to \nu_1$ and $\mu_n \to \nu_2$ in the weak* topology of  $\M(\bar\Sigma,\bar\sigma)$, such that
\begin{equation*}
t_1 \leq \lim_{n \to \infty} \left(h(\rho_n) + \int \phi \, d\rho_n	\right) \quad \text{ and } \quad
t_2 \leq \lim_{n \to \infty} \left(h(\mu_n) + \int \phi \, d\mu_n	\right). \end{equation*} 
Since the entropy map $h:\M_\phi(\Sigma,\sigma)\to \R$ and the integral are affine maps we have that for $\lambda \in (0,1)$,
\begin{equation*} 
\lambda t_1 + (1- \lambda) t_2 \leq \lim_{n \to \infty}  \left(h(\lambda \rho_n+ (1-\lambda) \mu_n) + \int \phi \, d(\lambda \rho_n+ (1-\lambda) \mu_n)	\right).
\end{equation*}
That is, $\lambda(\nu_1, t_1) +(1-\lambda) (\nu_2, t_2) \in \mathcal{C}$. Furthermore, it follows by the upper semicontinuity of $\bar{E}$ that $\mathcal{C}$ is a closed subset of $\M(\bar\Sigma,\bar\sigma) \times\left(\R \cup \{-\infty\} \right)$.

Let $\mu\in \M_\phi(\Sigma,\sigma)$ and $b >h(\mu) + \int \phi \, d \mu$. Since $\bar{E}$ is upper semi-continuous, it follows that $(\mu, b)$ does not belong to the closure of $\mathcal{C}$.

Define \(\mathcal{D}\) as the convex hull of \(\M_\phi(\Sigma, \sigma) \cup \{\delta_{\bar{\infty}}\}\) if \( P_\infty(\phi) > -\infty \), and set \(\mathcal{D} = \M_\phi(\Sigma, \sigma)\) if \( P_\infty(\phi) = -\infty \). Consider the following  subset of $\mathcal{C}$,
\begin{equation*}
\mathcal{C}_0=\left\{ (\nu, t) \in \mathcal{D}\times \R : t \leq \bar{E}(\nu) \right\}.
\end{equation*}
Note that $\mathcal{C}_0$ is a closed and convex subset of $\M(\bar\Sigma,\bar\sigma) \times \R$. Indeed, suppose that $((\mu_n,t_n))_n$ is a sequence in $\mathcal{C}_0$ that converges to a point $(\nu,t)\in \M(\bar\Sigma,\bar\sigma) \times \R$. Since $t_n\to t$ and $\limsup_{n\to\infty}\bar{E}(\mu_n)\le \bar{E}(\nu)$, we have $t\le \bar{E}(\nu)$. If $\nu\notin  \mathcal{D}$, then $\bar{E}(\nu)=-\infty$, which contradicts that $t\in\R$. The proof of the convexity of $\mathcal{C}_0$ is the same as for $\mathcal{C}$.

It follows from \cite[p.417]{ds} that there exists a continuous linear functional  
\( F: C(\bar\Sigma)^* \times \mathbb{R} \to \mathbb{R} \) such that for every  
\( (\nu, t) \in \mathcal{C}_0 \), we have  $F(\nu, t) < F(\mu, b)$. Furthermore, by \cite[Lemma 8.13]{ew}, there exist \( g \in C(\bar{\Sigma}, \mathbb{R}) \) and \( d \in \mathbb{R} \) such that  
\[
F(\nu, t) = t d + \int g \, d\nu
\]
(see also \cite[Proposition 6]{ph}).
 In particular, for every $\nu \in  \M_\phi(\Sigma,\sigma)$ we have
 \begin{equation*}
 \left(h(\nu) + \int \, \phi d \nu \right)d + \int g \, d \nu <  d b + \int g \, d \mu.
 \end{equation*} 
 If we make $\nu=\mu$, we obtain
  \begin{equation*}
  \left(h(\mu) + \int  \phi \, d \mu \right)d  <  d b.
 \end{equation*} 
 Thus, $d>0$. Therefore,
  \begin{equation*}
h(\nu) + \int  \phi \, d \nu  + \int \frac{g}{d} \, d \nu <   b + \int \frac{g}{d} \, d \mu.
 \end{equation*} 
 From where we have,
 \begin{equation*}
 \inf \left\{	P(\phi+g) - \int  g \, d \mu  : g \in C(\bar\Sigma, \R)		\right\} \leq P\bigg( \phi+\frac{g}{d}\bigg) - \int \frac{g}{d} \, d \mu < b.
 \end{equation*}
 Therefore,
 \begin{equation*}
 h(\mu) + \int  \phi \, d \mu \geq   \inf \left\{	P(\phi+g) - \int  g  \, d \mu  : g \in C(\bar\Sigma, \R)		\right\}.
 \end{equation*}
 This concludes the proof. 
 \end{proof}

  \section{Equilibrium measures}\label{sec:equi}
  
 We present several applications of our results to the study of equilibrium measures in countable Markov shifts.  
  

\subsection{Ergodic measures are unique equilibrium measures}

In this section, we prove that every ergodic measure satisfying a finite entropy assumption is the unique equilibrium measure for a uniformly continuous function. 

Let $(\Sigma, \sigma)$ be a countable Markov shift.
Denote by $\mathcal{B}= \{ [n] : n \in \N \}$ the partition of $\Sigma$ by cylinders of length one. For $\mu \in \M(\Sigma,\sigma)$, let $H_{\mu}(\mathcal{B})= -\sum_{n=1}^{\infty} \mu([n]) \log \mu([n])$ be the entropy of $\mathcal{B}$ with respect to $\mu$. The following is the main result of this sub-section.

\begin{theorem} \label{eem}
Let $(\Sigma, \sigma)$ be a transitive countable Markov shift and $\mu $ an ergodic measure such that   $H_{\mu}(\mathcal{B})$ is finite. Then, there exists $\phi\in\emph{UC}_d(\Sigma)$ such that $\mu$ is the unique equilibrium measure for $\phi$.
\end{theorem}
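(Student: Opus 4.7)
My plan is to combine the dual variational principle (Theorem \ref{theo:dual}) with the Poulsen-simplex geometry of $K = \mathcal{M}(\bar{\Sigma}, \bar{\sigma})$ from Theorem \ref{thm:po}. First I would construct a base potential $\phi_0 \in \text{UC}_d(\Sigma)$ that places $\mu$ inside the scope of Theorem \ref{theo:dual}. Writing $p_n = \mu([n])$, a natural choice is $\phi_0(x) = 2 \log p_{x_1}$, which depends only on $x_1$ and is hence uniformly continuous with respect to $d$. One verifies that $\sup \phi_0 \le 0$, that $\int \phi_0 \, d\mu = -2 H_\mu(\mathcal{B}) \in \R$ by the finite-entropy hypothesis, and that $\sum_n p_n^{2t} \le \sum_n p_n = 1$ whenever $t \in [1/2, 1]$, which gives $P(t \phi_0) \le 0$ on this interval, so $P(\phi_0) < \infty$ and $s_\infty(\phi_0) \le 1/2 < 1$. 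Theorem \ref{theo:dual} then yields
\[
h(\mu) + \int \phi_0 \, d\mu \;=\; \inf_{g \in C_b(\Sigma)} \Bigl[ P(\phi_0 + g) - \int g \, d\mu \Bigr],
\]
so $\mu$ is an approximate tangent functional to the pressure at $\phi_0$.

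To upgrade this to an actual and unique equilibrium measure, I would work with the upper semi-continuous affine extension $\bar{E}_{\phi_0} : K \to \R \cup \{-\infty\}$ of the free energy constructed in the proof of Theorem \ref{theo:dual}. A Hahn--Banach separation in $K \times \R$ applied to the closed convex hypograph of $\bar{E}_{\phi_0}$ at the finite-valued boundary point $(\mu, \bar{E}_{\phi_0}(\mu))$ produces a function $g_0 \in C(\bar{\Sigma})$, whose restriction to $\Sigma$ lies in $\text{UC}_d(\Sigma) \cap C_b(\Sigma)$, such that $\mu$ is a maximizer of $\nu \mapsto \bar{E}_{\phi_0}(\nu) + \int g_0 \, d\nu$ on $K$; in particular $\mu$ is an equilibrium measure of $\phi_0 + g_0|_\Sigma$. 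To force uniqueness, I would then enumerate a countable weak$^*$-dense set $\{\nu_n\}$ of ergodic measures in $K \setminus \{\mu\}$, choose for each $n$ a continuous function $g_n$ on $\bar{\Sigma}$ (constructed, e.g., as a finite combination of cylinder indicators, which are continuous on $\bar{\Sigma}$ by Remark \ref{rem:clopen}) that strictly separates $\mu$ from $\nu_n$, and assemble a uniformly convergent series $g = g_0 + \sum_n 2^{-n} g_n / \|g_n\|_\infty$. Setting $\phi = \phi_0 + g|_\Sigma$ yields a candidate potential in $\text{UC}_d(\Sigma)$ for which $\mu$ is an equilibrium measure and every member of a dense subset of ergodic measures in $K \setminus \{\mu\}$ is strictly excluded.

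The main obstacle is the uniqueness conclusion. Because extreme points of the Poulsen simplex are dense, no continuous affine functional can strictly expose $\mu$ and, in general, strict separation of $\mu$ from a dense subset of $K$ does not upgrade via upper semi-continuity alone to strict separation everywhere in $K$. Overcoming this requires combining the closed-face structure of the maximizer set of $\bar{E}_\phi$, which is itself a sub-simplex of $K$ with ergodic extreme points, the extremality of $\mu$, and the strict drop of the free energy along directions toward $\delta_{\bar{\infty}}$ ensured by $s_\infty(\phi_0) < 1$ via Theorem \ref{thm:usc}, so that any competing maximizer is ruled out. A final step is to confirm that any equilibrium measure in $\mathcal{M}_\phi(\Sigma, \sigma)$ embeds into the convex hull of $\mathcal{M}(\Sigma, \sigma) \cup \{\delta_{\bar{\infty}}\}$ inside $K$, so that uniqueness in $K$ implies uniqueness in $\mathcal{M}_\phi(\Sigma, \sigma)$.
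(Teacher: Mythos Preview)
Your construction of the base potential and the supporting-hyperplane step are essentially what the paper does: Lemma~\ref{finite} builds the base potential $\psi$ (your $\phi_0$), and the paper's Edwards-theorem functional $l_1$ plays the role of your $g_0$. One minor gap: your formula $\phi_0(x)=2\log p_{x_1}$ is not defined on cylinders $[n]$ with $\mu([n])=0$; the paper handles this by setting $\psi|_{[n]}=\log a_n$ for a summable positive sequence $(a_n)$ there, and then uses that the pressure on $\Sigma$ is bounded by the pressure on the full shift.

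The genuine gap is your uniqueness argument. Your assertion that ``no continuous affine functional can strictly expose $\mu$'' because extreme points of the Poulsen simplex are dense is \emph{false}: density of extreme points does not prevent a single extreme point from being the unique maximizer of a continuous affine functional --- nearby extreme points simply have values close to, but strictly below, the maximum. In fact, every ergodic $\mu\in\M(\bar\Sigma,\bar\sigma)$ \emph{is} exposed. This is precisely \cite[Lemma~4]{ph}, and equivalently the content of Jenkinson's theorem \cite{j} that every ergodic measure for a continuous map of a compact metric space is the unique maximizing measure of some continuous function. The paper invokes this directly: it produces a continuous affine $l_2$ on $\M(\bar\Sigma,\bar\sigma)$ with $l_2(\mu)=0$ and $l_2(\nu)<0$ for all $\nu\ne\mu$, adds it to the Edwards functional $l_1$, represents $l=l_1+l_2$ as $\nu\mapsto\int g\,d\nu$ for some $g\in C(\bar\Sigma)$ via \cite[Proposition~6]{ph}, and checks in one line that $\phi=\psi+g$ has $\mu$ as its unique equilibrium measure. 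Your countable-separation workaround --- which you correctly identify as the main obstacle --- is neither needed nor easy to close as written (a sum of hyperplanes separating $\mu$ from individual $\nu_n$ need not separate $\mu$ from all other measures simultaneously). The missing ingredient is exactly the exposing functional $l_2$.
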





The construction in the following lemma will be relevant in the proof of Theorem \ref{eem}.

\begin{lemma} \label{finite}
Let $(\Sigma, \sigma)$ be a transitive countable Markov shift and $\mu $ an ergodic measure such that   $H_{\mu}(\mathcal{B})$ is finite.  Then, there exists  $\psi\in  \emph{UC}_d(\Sigma)$ such that $\sup \psi<\infty$, 
$P(\psi)<\infty$ and $\int \psi \, d \mu > -\infty$.
\end{lemma}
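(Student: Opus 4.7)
The plan is to construct $\psi$ as a function of the first coordinate only, setting $\psi(x) := \log \tilde p_{x_1}$ for a carefully chosen probability sequence $(\tilde p_n)_{n\in\N}$ on $\N$. Any such $\psi$ is constant on each cylinder of length one, hence $\text{var}_n(\psi) = 0$ for all $n \ge 1$, which gives $\psi \in \text{UC}_d(\Sigma)$ automatically; and if $\tilde p_n \le 1$ for every $n$, then $\psi \le 0$, so $\sup \psi < \infty$ is immediate. The substantive work is therefore to choose $(\tilde p_n)_n$ so that both $\int \psi\,d\mu > -\infty$ and $P(\psi) < \infty$ hold simultaneously.

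I would take $\tilde p_n := \tfrac{1}{2}\mu([n]) + \tfrac{1}{2} a_n$, where $(a_n)_{n\in\N}$ is any strictly positive probability sequence on $\N$ (for instance $a_n = 6/(\pi n)^2$). Then $\tilde p_n > 0$ for every $n$, $\sum_n \tilde p_n = 1$, and $\tilde p_n \ge \tfrac{1}{2}\mu([n])$. Writing $A := \{n : \mu([n]) > 0\}$, this lower bound together with the hypothesis $H_\mu(\mathcal{B}) < \infty$ gives
$$\int \psi\,d\mu \;=\; \sum_{n\in A} \mu([n]) \log \tilde p_n \;\ge\; \sum_{n\in A} \mu([n]) \log \tfrac{\mu([n])}{2} \;=\; -H_\mu(\mathcal{B}) - \log 2 \;>\; -\infty.$$
For the pressure bound I would use two standard facts. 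First, $\mathcal{B}$ is a generator for the countable Markov shift, so $h(\nu) \le H_\nu(\mathcal{B})$ for every $\nu \in \M(\Sigma,\sigma)$. Second, the Gibbs inequality applied to the probability sequences $(\nu([n]))_n$ and $(\tilde p_n)_n$ yields $-\sum_n \nu([n]) \log \nu([n]) \le -\sum_n \nu([n]) \log \tilde p_n$. Combining these for every $\nu \in \M(\Sigma,\sigma)$ with $\int \psi\,d\nu > -\infty$,
$$h(\nu) + \int \psi\,d\nu \;\le\; H_\nu(\mathcal{B}) + \sum_n \nu([n]) \log \tilde p_n \;\le\; 0,$$
so taking the supremum gives $P(\psi) \le 0 < \infty$.

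The main tension is that finiteness of $\int \psi\,d\mu$ forces $\tilde p_n$ not to be much smaller than $\mu([n])$ on $A$, while finiteness of $P(\psi)$ requires $(\tilde p_n)_n$ to be summable. The convex-combination trick resolves both at once, summability being trivially ensured. Mixing in the strictly positive distribution $(a_n)_n$ is also what handles the symbols outside the support of $\mu|_\mathcal{B}$, where $\mu([n]) = 0$ and $\log \mu([n]) = -\infty$ would otherwise make $\psi$ degenerate; alternative constructions based directly on $g(n) := -\log \mu([n])$ require more care here, whereas the present choice gives a clean one-line verification of every property.
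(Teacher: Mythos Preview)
Your proof is correct and takes a genuinely different route from the paper's. The paper defines $\psi$ piecewise --- $\psi|_{[n]} = \log \mu([n])$ on the support of $\mu|_{\mathcal{B}}$ and $\psi|_{[n]} = \log a_n$ elsewhere --- then computes $P(\psi)$ explicitly on the full shift via the known formula $P(\psi) = \log \sum_n e^{\psi|_{[n]}}$, and finally handles the general transitive CMS by embedding it as a subsystem of the full shift and using monotonicity of pressure. Your convex combination $\tilde p_n = \tfrac{1}{2}\mu([n]) + \tfrac{1}{2}a_n$ avoids both of these steps: it deals with symbols of zero $\mu$-mass automatically, and it lets you bound $P(\psi) \le 0$ directly from the variational definition via the Gibbs inequality and $h(\nu) \le H_\nu(\mathcal{B})$, without invoking any explicit pressure formula or any embedding. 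The paper's approach has the minor advantage of yielding the exact value of $P(\psi)$ on the full shift, but yours is more self-contained and works uniformly for every transitive CMS at once.
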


\begin{proof}
Let us first assume that $(\Sigma, \sigma)$ is the full-shift on a countable alphabet. Let $r \in \N$ and $\mathcal{B}^r= \{[n_1 \cdots n_r]: (n_1, \dots, n_r) \in \N^r \}$ be the generating partition formed by cylinders of length $r$. Let $(a_n)_n$ be a sequence of strictly positive real numbers such that $\sum_{n=1}^{\infty} a_n <\infty$. Let $\psi: \Sigma \to \R$ be the locally constant function defined by
\begin{equation*}
\psi\Big|_{[n]}=
\begin{cases}
\log \mu([n]) & \text{ if } \mu([n])>0;\\
\log a_n & \text{ if } \mu([n])=0. 
\end{cases}
\end{equation*}
Then $\int \psi \, d \mu =\sum_{n=1}^{\infty} \mu([n]) \log \mu([n]) > -\infty$. Also 
\begin{equation*}
P(\psi)= \log \left( \sum_{n: \mu([n])>0} \mu([n]) + \sum_{n: \mu([n])=0} a_n \right) = \log \left( 1 + \sum_{n: \mu([n])=0} a_n \right)< \infty.
\end{equation*}


Now, let us consider the general case. Assume that  $(\Sigma, \sigma)$ is an arbitrary countable Markov shift and that $\mu \in \M(\Sigma,\sigma)$ satisfies $H_{\mu}(\mathcal{B})<\infty$.  Note that $(\Sigma, \sigma)$ can be regarded as a sub-system of the full-shift, and that $H_{\mu}(\mathcal{B})<\infty$ still holds when $\mu$ is considered as an invariant measure of the full-shift.
Therefore, as we just proved,   there exists a uniformly continuous function  $\psi$, defined on the full shift, such that $P(\psi)<\infty$ and $\int \psi \, d \mu > -\infty$. However, the pressure of $\psi$ with respect to the system $(\Sigma, \sigma)$ is at most the pressure of $\psi$ with respect to the full-shift. The result then follows.
\end{proof}

For a measure $\mu\in \M(\Sigma,\sigma)$ we define 
$$F(\mu)=\left\{\psi\in \text{UC}_d(\Sigma): \sup \psi<\infty, P(\psi)<\infty, \text{ and }\int \psi d\mu>-\infty\right\}.$$
With this definition, we can restate Lemma \ref{finite} as follows: If \( H_\mu(\mathcal{B}) < \infty \), then \( F(\mu) \) is non-empty. The following remark shows that the entropy assumption on the measure is essential for the full shift.

\begin{remark} \label{rai}

If $(\Sigma, \sigma)$ is the full-shift on a countable alphabet the reverse implication in Lemma \ref{finite} holds. More precisely, $F(\mu)$ is non-empty if and only if $H_\mu(\mathcal{B})<\infty$.
 Indeed, in that setting, Mauldin and Urba\'nski \cite[Theorem 2.1.7]{mu} proved the following: If there exists a uniformly continuous function  $\psi:\Sigma \to \R$ such that
$P(\psi)<\infty$ and $\int \psi \, d \mu > -\infty$, then  there exists $q \in \N$ such that $H_{\mu}(\mathcal{B}^q)<\infty$. Note that
\begin{equation*}
H_{\mu}(\mathcal{B}) \leq H_{\mu}(\mathcal{B}^q) \leq q H_{\mu}(\mathcal{B}).
\end{equation*}
The first inequality follows from \cite[Theorem 4.3 (ii)]{w} while the second from sub-additivity  \cite[Corollary 4.9.1]{w}. We conclude that  $H_{\mu}(\mathcal{B})<\infty$. Interestingly, the assumption $h(\mu)<\infty$ does not guarantee the non-emptiness of $F(\mu)$ as there are invariant probability measures on the full shift  such that $H_\mu(\mathcal{B})=\infty$ and $h(\mu)<\infty$. 

For a general countable Markov shift $(\Sigma,\sigma)$ the non-emptiness of $P(\mu)$ is 
 not characterized by the finiteness of $H_\mu(\mathcal{B})$. Indeed, Gurevich \cite[Section 3]{gu} constructed an ergodic invariant measure $\mu$ for a finite entropy countable Markov shift $(\Sigma, \sigma)$ (the renewal shift, see \cite[Section 4]{s2}) such that
$h(\mu)< \infty$ and $H_{\mu}(\mathcal{B})=\infty$. 
Let $c \in \R$ and $\phi:\Sigma \to \R$ be a constant function $\phi=c$. Thus, since the system in Gurevich's example has finite entropy, we have that $P(\phi)< \infty$,
$\int \phi \, d \mu=c$ and $H_{\mu}(\mathcal{B})=\infty$. 
\end{remark}

Let $\psi:\Sigma \to \R$ be the function provided by Lemma \ref{finite}. We can assume that $s_\infty(\psi)<1$; otherwise replace $\psi$ by $t\psi$ for $t>1$. 
Consider, as in the proof of Theorem \ref{theo:dual}, the free energy function of $\psi$.  That is, the upper semi-continuous function  $$\bar E:\M(\bar\Sigma,\bar\sigma) \to \R \cup \{-\infty\}$$  which is defined by $\bar E(\nu)= h(\nu) + \int \psi \, d \nu,$ for measures $\nu\in \M_\psi(\Sigma,\sigma)$. In order to prove Theorem \ref{eem} we will adapt the strategy proposed by Phelps \cite{ph}. 

Let $f_1:\M(\bar\Sigma,\bar\sigma) \to \R \cup \{\infty\}$ be defined by
\begin{equation*}
f_1(\nu)=
\begin{cases}
-\bar E(\mu) & \text{ if } \nu=\mu; \\
-P(\psi) & \text{ if } \nu \neq \mu.
\end{cases}
\end{equation*}
The function \( f_1 \) is convex and upper semi-continuous. Define \( f_2 = -\bar{E} \). Note that \( f_2 \) is lower semi-continuous and affine; in particular, it is concave. Moreover, we have \( f_1(\mu) = f_2(\mu) \) and, more generally, \( f_1(\nu) \leq f_2(\nu) \). The following result is a special case of \cite[Théorème]{ed} (see also \cite[Theorem II.3.10]{al}).

\begin{lemma}[Edwards]
There exists an affine continuous function $l_1:\M(\bar\Sigma,\bar\sigma) \to \R$ such that
$f_1 \leq l_1 \leq f_2$.
\end{lemma}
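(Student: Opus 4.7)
The plan is to prove the lemma—a special case of Edwards' separation theorem \cite[Th\'eor\`eme]{ed}—via the Hahn-Banach separation theorem applied to convex subsets of the product space $K \times \mathbb{R}$, where $K = \M(\bar\Sigma,\bar\sigma)$ is compact convex in the weak$^*$ topology on $C(\bar\Sigma)^*$. The key observation enabling this translation is that the continuous linear functionals on $C(\bar\Sigma)^* \times \mathbb{R}$ are exactly the maps $(\nu,t)\mapsto \int g\,d\nu + \alpha t$ with $g\in C(\bar\Sigma)$ and $\alpha\in\mathbb{R}$, so separating hyperplanes directly read as affine continuous functions on $K$.

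I would first exploit the special shape of $f_1$. Since $f_1(\mu)=f_2(\mu)=-\bar E(\mu)$, any affine continuous $l_1$ with $f_1\le l_1\le f_2$ must satisfy $l_1(\mu)=-\bar E(\mu)$; continuity together with $f_1\equiv -P(\psi)$ on $K\setminus\{\mu\}$ then forces $l_1\ge -P(\psi)$ on all of $K$. Thus the task reduces to constructing an affine continuous function tangent from below to the affine USC function $-\bar E$ at $\mu$, while remaining inside the band $[-P(\psi),-\bar E]$. I would then apply Hahn-Banach: the hypograph $H=\{(\nu,t)\in K\times\mathbb{R}:t\le\bar E(\nu)\}$ is closed (by upper semi-continuity of $\bar E$) and convex (by the affineness of $\bar E$), so strict separation of the point $(\mu,\bar E(\mu)+\epsilon)\notin H$ from $H$ produces $g_\epsilon\in C(\bar\Sigma)$ and $\alpha_\epsilon,c_\epsilon\in\mathbb{R}$ with $\int g_\epsilon\,d\nu+\alpha_\epsilon t\le c_\epsilon$ on $H$ and strict inequality at the separated point. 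Letting $t\to-\infty$ along $H$ forces $\alpha_\epsilon\ge 0$, and using $(\mu,\bar E(\mu))\in H$ rules out $\alpha_\epsilon=0$; so $\alpha_\epsilon>0$, and after rescaling the function $m_\epsilon(\nu):=(c_\epsilon-\int g_\epsilon\,d\nu)/\alpha_\epsilon$ is affine continuous, majorizes $\bar E$ on $K$, and satisfies $m_\epsilon(\mu)<\bar E(\mu)+\epsilon$.

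The main obstacle is to pass from this family of approximate tangents to a single $l_1=-m$ attaining equality at $\mu$ while lying above $-P(\psi)$ throughout $K$. The family $(m_\epsilon)_{\epsilon>0}$ is not \emph{a priori} uniformly bounded in the space of affine continuous functions $A(K)$, so a naive pointwise limit need not produce a continuous function. The standard remedy, built into the proof of \cite[Theorem II.3.10]{al}, is to separate richer convex sets that simultaneously encode the upper bound $-\bar E$ and the lower constant bound $-P(\psi)$---for example, separating the closed convex hull $\overline{\text{conv}}(\text{epi}(f_1))$ from a point strictly below $f_2$ at $\mu$---and then to exploit that $\mu$ is an extreme point of the Poulsen simplex $K$ (since $\mu$ is ergodic) so that the associated extremal problem is attained via Choquet-theoretic considerations. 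Once an affine continuous $l_1$ lying in the band $[-P(\psi),-\bar E]$ and tangent to $-\bar E$ at $\mu$ is produced, the desired inequality $f_1\le l_1\le f_2$ follows by inspection.
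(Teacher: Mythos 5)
Your Hahn--Banach setup is sound as far as it goes: the hypograph of $\bar E$ is closed and convex in $\M(\bar\Sigma,\bar\sigma)\times\R$, and separating it from the point $(\mu,\bar E(\mu)+\epsilon)$ does yield, for each $\epsilon>0$, a continuous affine $m_\epsilon\ge\bar E$ with $m_\epsilon(\mu)<\bar E(\mu)+\epsilon$. But that only says the upper envelope of $\bar E$ evaluated at the single point $\mu$ equals $\bar E(\mu)$; it holds on any compact convex set and is not the lemma. The lemma needs one continuous affine function that is simultaneously an exact tangent at $\mu$ (forced by $f_1(\mu)=f_2(\mu)$) and trapped in the band between $-P(\psi)$ and $-\bar E$ globally, and your third paragraph, where this is supposed to happen, contains no argument: ``separate richer convex sets'' and ``Choquet-theoretic considerations'' are placeholders, and the closing appeal to \cite[Theorem II.3.10]{al} is circular, since that theorem \emph{is} the statement being proved. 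The property you propose to exploit --- that $\mu$ is an extreme point of $K$ --- is also not the relevant one. Edwards' separation theorem is a characterization of Choquet simplices: on a compact convex set that is not a simplex one can find a convex upper semi-continuous $f$ and a concave lower semi-continuous $g$ with $f\le g$ admitting no continuous affine separator, irrespective of where the two functions touch. What the proof genuinely requires is the simplex structure of $\M(\bar\Sigma,\bar\sigma)$ (uniqueness of the ergodic decomposition), entering either through the Riesz decomposition property of $A(K)^*$ or through the fact that on a simplex the upper envelope of a convex upper semi-continuous function is affine; your proposal never invokes it.

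For comparison, the paper does not prove the lemma at all: it records it as a special case of Edwards' th\'eor\`eme and cites \cite{ed} and \cite[Theorem II.3.10]{al}. The two defensible options are to do the same, or to actually reproduce Edwards' argument in this setting. Your proposal does neither --- it carries out the easy approximate-separation half, correctly identifies that the passage to an exact tangent is the obstacle, and then defers that obstacle back to the very reference whose content you set out to prove.
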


Observe that \((l_1 + \bar{E})(\mu) = 0\) and that for any \(\nu \in \M(\bar{\Sigma}, \bar{\sigma})\), we have \((l_1 + \bar{E})(\nu) \leq 0\). By \cite[Lemma 4]{ph}, there exists an affine continuous function \( l_2 : \M(\bar{\Sigma}, \bar{\sigma}) \to \mathbb{R} \) such that \( l_2(\mu) = 0 \) and \( l_2(\nu) < 0 \) for all \(\nu \in \M(\bar{\Sigma}, \bar{\sigma}) \setminus \{\mu\} \). Consider the affine continuous function \( l = l_1 + l_2 \). It follows that \( (l + \bar{E})(\mu) = 0 \) and \( (l + \bar{E})(\nu) < 0 \) for every \( \nu \neq \mu \). By \cite[Proposition 6]{ph}, there exists a continuous function \( g: \bar{\Sigma} \to \mathbb{R}\) such that \( l(\nu) = \int g \, d\nu \).

\begin{proof}[Proof of Theorem \ref{eem}]
We can conclude the proof noting that $\phi= \psi + g$ satisfies the required properties. Indeed, for every $\nu \in \M_\psi(\Sigma,\sigma)=\M_\phi(\Sigma,\sigma)$ with $\nu \neq \mu$ we have
\begin{equation*}
0>(l + \bar E)(\nu)= \int g \, d \nu + \int \psi \, d \nu + h(\nu).
\end{equation*}
On the other hand,
\begin{equation*}
0=(l + \bar E)(\mu)= \int g \, d \mu + \int \psi \, d \mu + h(\mu).
\end{equation*}
That is, the measure $\mu$ is the unique equilibrium measure for $\phi$.
\end{proof}

\begin{remark}
In view of Theorem \ref{eem}, a natural question is whether given a face $J$  of  $\M(\Sigma,\sigma)$ there exists a continuous function $\phi:\Sigma \to \R$ for which the set of equilibrium measures for $\phi$ is exactly $J$. An observation by Phelps (see \cite[Remark 9]{ph}) allows us to answer this question: 
If $J$ is a face of  $\M(\Sigma,\sigma)$ so that the pressure is continuous in the  face in $\M(\bar \Sigma,\bar \sigma)$ that contains $J$,  then the required continuous function exists  \footnote{It seems that the additional assumption of continuity of the pressure in the face,  in Theorems 5 and 6 in \cite{j}, is required  for the outlined argument in that proof to hold.}.
\end{remark}

\subsection{The set of measures which are not equilibrium measures is dense} \label{neq}

The following result addresses a complementary question to the one considered in the previous section and is based on an observation by Walters.

\begin{theorem} \label{no-eq}
Let $(\Sigma, \sigma)$ be a transitive countable Markov shift. Then, the set of measures that are not equilibrium measures for any bounded above continuous function is dense in $\M(\Sigma,\sigma)$. 
\end{theorem}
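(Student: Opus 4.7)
The argument rests on the following consequence of the variational principle, attributed to Walters in the compact setting: if $\mu \in \M(\Sigma, \sigma)$ is an equilibrium measure for a bounded above continuous potential $\phi$ with $P(\phi) < \infty$, then for every sequence $(\mu_n) \subset \M_\phi(\Sigma, \sigma)$ converging weak$^*$ to $\mu$,
\[
\limsup_{n\to\infty}\Big(h(\mu_n) + \int \phi \, d\mu_n\Big) \;\le\; P(\phi) \;=\; h(\mu) + \int \phi \, d\mu,
\]
simply because $h(\nu) + \int \phi \, d\nu \leq P(\phi)$ for every $\nu \in \M_\phi(\Sigma, \sigma)$. Consequently, a measure $\mu$ that admits a weak$^*$ approximating sequence whose limiting free energy \emph{strictly exceeds} $h(\mu) + \int \phi \, d\mu$ (uniformly over every candidate $\phi$) cannot be an equilibrium measure for any such $\phi$.

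The plan is to exhibit such ``Walters-bad'' measures in a weak$^*$-dense subset of $\M(\Sigma,\sigma)$. I start from the density of periodic measures (Proposition \ref{lem:21}, Remark \ref{rem:perio}) and perturb each periodic $\mu_p$ to a convex combination $\mu = \tfrac{1}{2}(\mu_p + \mu_q)$, where $\mu_q$ is a distinct periodic measure close to $\mu_p$ in the weak$^*$ topology (constructed via the transitivity of $(\Sigma,\sigma)$, which allows connecting long orbit segments from two different periodic orbits). If such $\mu$ were an equilibrium measure for some $\phi$, then by the ergodic decomposition of equilibria both ergodic components $\mu_p$ and $\mu_q$ would also be equilibrium measures, forcing $\int \phi \, d\mu_p = \int \phi \, d\mu_q = P(\phi)$, since periodic measures have zero entropy. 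Combining this rigid identity with the Walters inequality applied to further higher-entropy approximating sequences of $\mu$ yields a contradiction, so $\mu$ is not an equilibrium for any such $\phi$.

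The step I expect to require the most care is handling the general \emph{bounded above} (as opposed to merely bounded) continuous case, because potentials that are unbounded below can have $\int \phi\, d\mu_n \to -\infty$ along approximating sequences with escape of mass, so the Walters inequality does not directly rule out $\mu$ being equilibrium. To address this, the approximating sequences are arranged to be supported in a common compact sub-SFT $K \subset \Sigma$ (produced from $\mathrm{supp}(\mu_p) \cup \mathrm{supp}(\mu_q)$ together with finitely many connecting words, via transitivity): on this compact $K$ every bounded above continuous $\phi$ is automatically bounded, hence weak$^*$ convergence forces $\int \phi \, d\mu_n \to \int \phi \, d\mu$, and the Walters inequality reduces to the failure of upper semi-continuity of the free energy at $\mu$. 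Since in the Choquet simplex of a compact SFT the ergodic measures are dense (Sigmund), the compactly supported perturbations we need are always available arbitrarily close to any periodic measure, giving the required density.
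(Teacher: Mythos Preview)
Your approach has a genuine gap. The measure $\mu=\tfrac{1}{2}(\mu_p+\mu_q)$ that you construct \emph{can} be an equilibrium measure for a bounded-above continuous potential, and the ``higher-entropy approximating sequences'' you invoke cannot produce the contradiction you claim. Concretely: you want $\nu_n\to\mu$ inside the compact sub-SFT $K$ with $\limsup_n\big(h(\nu_n)+\int\phi\,d\nu_n\big)>h(\mu)+\int\phi\,d\mu$. But on a compact SFT the entropy map is upper semi-continuous, so $\limsup_n h(\nu_n)\le h(\mu)=0$; together with $\int\phi\,d\nu_n\to\int\phi\,d\mu$ (which you correctly arrange via compact support) this yields $\limsup_n\big(h(\nu_n)+\int\phi\,d\nu_n\big)\le h(\mu)+\int\phi\,d\mu$, not the strict inequality you need. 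In other words, precisely because the free energy is upper semi-continuous on $\M(K,\sigma)$, the ``Walters inequality'' is never violated at $\mu$ by sequences in $K$. (Indeed, closed faces such as the segment $[\mu_p,\mu_q]$ can arise as the full set of equilibrium states of some continuous potential; cf.\ the Remark after Theorem~\ref{eem}.)

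The paper's argument avoids this obstruction by taking an \emph{infinite} convex combination $\mu=\sum_n p_n\mu_n$ of periodic measures $\mu_n$ that converge weak$^*$ to a fixed measure $\nu$ with $h(\nu)>0$. The point is that the ergodic components $\mu_n$ themselves accumulate on $\nu$, which is \emph{not} an ergodic component of $\mu$. If $\mu$ were an equilibrium state for $\phi$, then each $\mu_n$ is too, so $\int\phi\,d\mu_n=P(\phi)$ for all $n$; since $\phi$ is bounded above, $\limsup_n\int\phi\,d\mu_n\le\int\phi\,d\nu$, whence $h(\nu)+\int\phi\,d\nu\ge h(\nu)+P(\phi)>P(\phi)$, a contradiction. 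What makes this work is exactly what fails in your version: a \emph{finite} convex combination has only finitely many ergodic components and hence no accumulation point outside the component set; with infinitely many components one manufactures a ``ghost'' limit $\nu$ whose positive entropy delivers the contradiction.
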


\begin{proof} Let $(p_n)_n$ be a sequence such that $p_n>0$ for every $n  \in \N$ and $\sum_{n=1}^{\infty} p_n=1$. Consider a sequence $(\mu_n)_n$  in $\M(\Sigma,\sigma)$ of finite entropy measures which converges in the weak* topology to a measure $\nu\in\M(\Sigma,\sigma)$ satisfying that $\lim_{n \to \infty} h(\mu_n) < h(\nu)<\infty$.  Walters \cite[p.475]{w2} observed that the measure $\mu=\sum_{n=1}^{\infty} p_n \mu_n$ is not an equilibrium measure for any continuous function.
Indeed, assume by way of contradiction that $\mu$ is an equilibrium measure for $\phi$. That is,
\begin{eqnarray*}
P(\phi)=h(\mu) + \int \phi \, d \mu= \sum_{n=1}^{\infty} p_n h(\mu_n)  +\sum_{n=1}^{\infty} p_n \int \phi \, 
d\mu_n= \sum_{n=1}^{\infty} p_n \left(h(\mu_n) + \int \phi \,d \mu_n \right).
\end{eqnarray*}
Thus, for every $n \in \N$ we have $P(\phi)=h(\mu_n) + \int \phi \, d \mu_n.$ Since $P(\phi)=\lim_{n \to \infty} (h(\mu_n) +  \int \phi \, d \mu_n)$, and $\limsup_{n\to\infty}h(\mu_n)<\infty$, we have that $\lim_{n \to \infty}\int \phi \, d \mu_n >-\infty$. By \cite[Lemma 3]{jmu} we note that $\lim_{n \to \infty}\int \phi \, d \mu_n \leq \int \phi \, d \nu$, and therefore $\int \phi d \nu>-\infty$. Finally, observe that $P(\phi)=\lim_{n \to \infty} (h(\mu_n) +  \int \phi \, d \mu_n) < h(\nu) + \int \phi \, d \nu$, which is a  contradiction.

Note that the set of periodic measures and the set of positive entropy measures are both dense in $\M(\Sigma,\sigma)$ (see Remark \ref{rem:perio}). It is enough to prove that any measure $\nu\in \M(\Sigma,\sigma)$ with positive entropy can be approximated by measures that are not equilibrium states. Let $\epsilon>0$ and consider a sequence of periodic measures $(\mu_n)_n$ which are $\epsilon$-close to $\mu$. Let  $(p_n)_n$ be a vector with $p_n>0$ for every $n  \in \N$ and $\sum_{n=1}^{\infty} p_n=1$. The measure $\sum_{n=1}^{\infty} p_n \mu_n$  is not an equilibrium measure and it is $\epsilon$-close to $\nu$.  This proves the claim.
\end{proof}


\begin{thebibliography}{CHMW}

\bibitem[Al]{al} E.\ Alfsen, \emph{Compact convex sets and boundary integrals.} Ergebnisse der Mathematik und ihrer Grenzgebiete, Band 57. Springer-Verlag, New York-Heidelberg, 1971. x+210 pp.

\bibitem[An]{an} R. D. Anderson, \emph{Hilbert space is homeomorphic to the countable infinite product of lines}. Bull. Amer. Math. Soc. 72 (1966), 515--519.

\bibitem[BO]{ov} S. Ben Ovadia,  \emph{ Symbolic dynamics for non uniformly hyperbolic diffeomorphisms of compact smooth manifolds}, J. Mod. Dyn. 13 (2018), 43--113.

\bibitem[BP]{bp} C.\ Bessaga, A.\ Pełczy\'nski, \emph{Selected topics in infinite-dimensional topology.}
Monografie Matematyczne, Tom 58. [Mathematical Monographs, Vol. 58] PWN—Polish Scientific Publishers, Warsaw, 1975. 353 pp.



\bibitem[BCMV]{bcm} A.\ Bi\'s, M.\ Carvalho, M.\ Mendes, P.\ Varandas, \emph{A convex analysis approach to entropy functions, variational principles and equilibrium states.} Comm. Math. Phys. 394 (2022), no. 1, 215--256.


\bibitem[BGMY]{blo} L. \ Block, J.\ Guckenheimer, M.\ Misiurewicz,  L.S.\ Young, \emph{Periodic points and topological entropy of one-dimensional maps.}  Lecture Notes Math. 819 (1980) 18--34.

\bibitem[B]{bg} V.I.\ Bogachev, \emph{Measure theory. Vol. I, II.}  Springer-Verlag, Berlin, 2007. Vol. I: xviii+500 pp., Vol. II: xiv+575 pp.

\bibitem[Bou]{b} N.\ Bourbaki, \emph{General topology. Chapters 1–4.}  Elem. Math. (Berlin) Springer-Verlag, Berlin, 1998, iv+363 pp. 

\bibitem[BT]{bt} H.\ Bruin, M.\ Todd, \emph{Equilibrium states for interval maps: potentials with  $\sup \phi -\inf \phi < h_{top}(f)$}  Comm. Math. Phys. 283 (2008), no. 3, 579--611.

\bibitem[BCS]{bcs}  J.\ Buzzi, S.\ Crovisier, O.\ Sarig, \emph{Measures of maximal entropy for surface diffeomorphisms}  Ann. of Math. (2) 195 (2022), no. 2, 421--508.

\bibitem[BS]{bs} J. Buzzi, O. Sarig, \emph{Uniqueness of equilibrium measures for countable Markov shifts and multidimensional piecewise expanding maps}. Ergodic Theory Dynam. Systems 23 (2003), no. 5, 1383--1400. 


\bibitem[C]{c} V.\ Cyr, \emph{Countable Markov shifts with transient potentials.} Proc. Lond. Math. Soc. (3) 103 (2011), no. 6, 923--949.

\bibitem[CS]{cs} V.\ Cyr, O.\ Sarig, \emph{Spectral Gap and Transience for Ruelle Operators on Countable Markov Shifts.} Comm. Math. Phys. 292 (2009), no. 3, 637--666.
 


\bibitem[D]{d} T.\ Downarowicz,  \emph{The Choquet simplex of invariant measures for minimal flows}, Isr. J. Math. 74, (1991) 241--256

\bibitem[DS]{ds} N.\ Dunford, J.T.\ Schwartz,  \emph{Linear operators. Part I. General theory.} With the assistance of William G. Bade and Robert G. Bartle. Reprint of the 1958 original. Wiley Classics Library. A Wiley-Interscience Publication. John Wiley \& Sons, Inc., New York, 1988. xiv+858 pp.

\bibitem[E]{ed} D.A.\ Edwards,  \emph{S\'eparation des fonctions réelles définies sur un simplexe de Choquet.}  C. R. Acad. Sci. Paris 261 (1965), 2798--2800. 

\bibitem[EW]{ew} M.\ Einsiedler, T.\ Ward,  \emph{Functional analysis, spectral theory, and applications.} Graduate Texts in Mathematics, 276. Springer, Cham, 2017. xiv+614 pp.





\bibitem[FF]{ff1} D.\ Fiebig, U.\ Fiebig, \emph{Topological boundaries for countable state Markov shifts.}
Proc. London Math. Soc. (3) 70 (1995), no. 3, 625--643.


\bibitem[GK]{gk} K. \ Gelfert, D. \ Kwietniak, \emph{On density of ergodic measures and generic points}. Ergodic Theory Dynam. Systems 38 (2018), no. 5, 1745--1767.  

\bibitem[G1]{gu2} B.M.\ Gurevich, \emph{Shift entropy and Markov measures in the path space of a denumerable graph,}
Dokl. Akad. Nauk SSSR 11 (1970) 744--747.

\bibitem[G2]{gu} B.M.\ Gurevich,  \emph{Entropy of a stationary process and entropy of a shift in the space of its realizations.}  Problemy Peredachi Informatsii 53 (2017), no. 2, 3--15; translation in Probl. Inf. Transm. 53 (2017), no. 2, 103--113.


\bibitem[GKa]{gka} B.M.\ Gurevich, S.\ Katok, \emph{Arithmetic coding and entropy for the positive geodesic flow on the modular surface.} Mosc. Math. J. 1 (2001), no. 4, 569--582, 645.


\bibitem[GS]{gs} B.M.\ Gurevich, S.V.\ Savchenko, \emph{Thermodynamic formalism for symbolic Markov chains with a countable number of states.} Uspekhi Mat. Nauk 53 (1998), no. 2 (320), 3--106; translation in Russian Math. Surveys 53 (1998), no. 2, 245--344



\bibitem[HK]{hk} M.\ Handel, B.\ Kitchens, \emph{Metrics and entropy for non-compact spaces.} With an appendix by Daniel J. Rudolph. Israel J. Math. 91 (1995), no. 1-3, 253--271.



\bibitem[IT]{it2} G.\ Iommi, M.\ Todd,  \emph{Differentiability of the pressure in non-compact spaces.} Fundamenta Mathematicae 259, Vol.2 151--177 (2022).

\bibitem[ITV]{itv} G.\ Iommi, M.\ Todd, A.\ Velozo, \emph{Escape of entropy for countable Markov shifts.}
Advances in Mathematics 405, Paper No. 108507 (2022). 

\bibitem[IV]{iv} G.\ Iommi, A.\ Velozo, \emph{The space of invariant measures for countable Markov shifts} Journal d'Analyse Mathematique 143 no.2 461--501 (2021).


\bibitem[I]{is} R.B.\ Israel, \emph{Convexity in the theory of lattice gases.} Princeton Series in Physics. With an introduction by Arthur S. Wightman. Princeton University Press, Princeton, N.J., 1979. lxxxv+167 pp.

\bibitem[J]{j} O.\ Jenkinson, \emph{Every ergodic measure is uniquely maximizing.} Discrete Contin. Dyn. Syst. 16 (2006), no. 2, 383--392. 

\bibitem[JMU]{jmu}  O.\ Jenkinson, R.D.\ Mauldin, M.\ Urba\'nski, \emph{Zero temperature limits of Gibbs-equilibrium states for countable alphabet subshifts of finite type.} J. Stat. Phys. 119 (2005), no. 3-4, 765--776.

\bibitem[K]{ke} O.\ Keller, \emph{Die Homoiomorphie der kompakten konvexen Mengen im Hilbertschen Raum}, Mathematische Annalen v. 105 (1931), 748--758.



\bibitem[KB]{kb} N.\ Kryloff, N.\ Bogoliouboff, \emph{La th\'eorie g\'en\'erale de la mesure dans son application \'a l'\'etude des syst\'emes dynamiques de la m\'ecanique non lin\'eaire.}  Ann. of Math. (2) 38 (1937), no. 1, 65--113.

\bibitem[LM]{lm}   Y.\ Lima, C.\ Matheus,  \emph{Symbolic dynamics for non-uniformly hyperbolic surface maps with discontinuities}. Ann. Sci. Ec. Norm. Super.  51, (2018) no.1, 1--38.

\bibitem[L]{l} Y.\ Lima, \emph{Symbolic dynamics for non-uniformly hyperbolic systems.} Ergodic Theory Dynam. Systems 41 (2021), no. 9, 2591--2658. 

\bibitem[LOS]{los} J.\ Lindenstrauss, G.\ Olsen, Y.\ Sternfeld, \emph{The Poulsen simplex.} Ann. Inst. Fourier (Grenoble) 28 (1978), no. 1, vi, 91--114. 

\bibitem[MU]{mu} R.D.\ Mauldin, M.\ Urba\'nski,  \emph{Graph directed Markov systems. Geometry and dynamics of limit sets.} Cambridge Tracts in Mathematics, 148. Cambridge University Press, Cambridge, 2003. xii+281 pp.

\bibitem[Ol]{ol} G.\ Olsen,  \emph{On simplices and the Poulsen simplex.} Functional analysis: surveys and recent results, II (Proc. Second Conf. Functional Anal., Univ. Paderborn, Paderborn, 1979), pp. 31--52, Notas Mat., 68, North-Holland, Amsterdam-New York, 1980. 


\bibitem[Ox]{o} J.C.\ Oxtoby, \emph{On two theorems of Parthasarathy and Kakutani concerning the shift transformation.} 1963 Ergodic Theory (Proc. Internat. Sympos., Tulane Univ., New Orleans, La., 1961) pp. 203--215 Academic Press, New York 

\bibitem[Pa]{par} K.R.\ Parthasarathy, \emph{On the category of ergodic measures.} Illinois J. Math. 5 (1961) 648--656.

\bibitem[Ph]{ph} R.R.\ Phelps,  \emph{Unique equilibrium states.} Dynamics and randomness (Santiago, 2000), 219--225, Nonlinear Phenom. Complex Systems, 7, Kluwer Acad. Publ., Dordrecht, 2002. 

\bibitem[Po]{pou}  E.T.\ Poulsen, \emph{A simplex with dense extreme points,} Ann. Inst. Fourier, Grenoble, 11 (1961), 83--87

\bibitem[R]{r} D.\ Ruelle,  \emph{Statistical mechanics on a compact set with $Z^p$ action satisfying expansiveness and specification.} Trans. Amer. Math. Soc. 187 (1973), 237--251.

\bibitem[Ru]{rue} S.\ Ruette,  \emph{On the Vere-Jones classification and existence of maximal measures for countable topological Markov chains.} Pacific J. Math. 209 (2003), no. 2, 366--380. 

\bibitem[RS]{rs}  R. R\"uhr,  O. Sarig, \emph{Effective intrinsic ergodicity for countable state Markov shifts}, Israel J. Math. 251 (2022), no. 2, 679--735.

\bibitem[S1]{s1} O. Sarig, \emph{Thermodynamic formalism for countable Markov shifts.} Ergodic Theory Dynam. Systems 19 (1999), no. 6, 1565--1593. 

\bibitem[S2]{s2} O.\ Sarig, \emph{Phase transitions for countable Markov shifts.} Comm. Math. Phys. 217 (2001), no. 3, 555--577.



\bibitem[S3]{sa4} O.\ Sarig,  \emph{Symbolic dynamics for surface diffeomorphisms with positive entropy.} J. Amer. Math. Soc. 26 (2013), no. 2, 341--426.

\bibitem[Sh]{sh} O.\ Shwartz, \emph{Thermodynamic formalism for transient potential functions.} Comm. Math. Phys. 366 (2019), no. 2, 737--779. 

\bibitem[Si]{si} K.\ Sigmund, \emph{Generic properties of invariant measures for Axiom A diffeomorphisms}. Invent. Math. 11 (1970), 99--109.

\bibitem[V]{v} A.\ Velozo, \emph{Pressure at infinity for countable Markov shifts}. Preprint arXiv:2410.16552.

\bibitem[W1]{Wal78}   P.\ Walters,    \emph{Invariant measures and equilibrium states for some mappings which expand distances. } Trans. Amer. Math. Soc. 236 (1978), 121--153.

\bibitem[W2]{w}  P.\ Walters,  \emph{An introduction to ergodic theory.} Graduate Texts in Mathematics, 79. Springer-Verlag, New York-Berlin, 1982 ix+250 pp.

\bibitem[W3]{w2} P.\ Walters,  \emph{Differentiability properties of the pressure of a continuous transformation on a compact metric space.} J. London Math. Soc. (2) 46 (1992), no. 3, 471--481. 

\bibitem[Z]{Zar86}  A.S.\ Zargaryan, \emph{A variational principle for the topological pressure in the case of a Markov chain with a countable number of states.} Mat. Zametki 40 (1986), no. 6, 749--761, 829.

\end{thebibliography}
\end{document}